\theoremstyle{definition}
\newtheorem{thm}{Theorem} [section]
\newtheorem{lemma}[thm]{Lemma}
\newtheorem{corollary}[thm]{Corollary}
\newtheorem{prop}[thm]{Proposition}
\newtheorem{defn}[thm]{Definition}
\newtheorem{example}[thm]{Example}
\newtheorem{remark}[thm]{Remark}
\begin{document}

\numberwithin{equation}{section}

\newcommand{\hs}{\mbox{\hspace{.4em}}}
\newcommand{\ds}{\displaystyle}
\newcommand{\bd}{\begin{displaymath}}
\newcommand{\ed}{\end{displaymath}}
\newcommand{\bcd}{\begin{CD}}
\newcommand{\ecd}{\end{CD}}

\newcommand{\on}{\operatorname}
\newcommand{\proj}{\operatorname{Proj}}
\newcommand{\bproj}{\underline{\operatorname{Proj}}}

\newcommand{\spec}{\operatorname{Spec}}
\newcommand{\Spec}{\operatorname{Spec}}
\newcommand{\bspec}{\underline{\operatorname{Spec}}}
\newcommand{\pline}{{\mathbf P} ^1}
\newcommand{\aline}{{\mathbf A} ^1}
\newcommand{\pplane}{{\mathbf P}^2}
\newcommand{\aplane}{{\mathbf A}^2}
\newcommand{\coker}{{\operatorname{coker}}}
\newcommand{\ldb}{[[}
\newcommand{\rdb}{]]}

\newcommand{\Sym}{\operatorname{Sym}^{\bullet}}
\newcommand{\Symp}{\operatorname{Sym}}
\newcommand{\Pic}{\bf{Pic}}
\newcommand{\Aut}{\operatorname{Aut}}
\newcommand{\PAut}{\operatorname{PAut}}

\newcommand{\too}{\twoheadrightarrow}
\newcommand{\C}{{\mathbf C}}
\newcommand{\Z}{{\mathbf Z}}
\newcommand{\Q}{{\mathbf Q}}
\newcommand{\Cx}{{\mathbf C}^{\times}}
\newcommand{\Cbar}{\overline{\C}}
\newcommand{\Cxbar}{\overline{\Cx}}
\newcommand{\cA}{{\mathcal A}}
\newcommand{\cS}{{\mathcal S}}
\newcommand{\cV}{{\mathcal V}}
\newcommand{\cM}{{\mathcal M}}
\newcommand{\bA}{{\mathbf A}}
\newcommand{\cB}{{\mathcal B}}
\newcommand{\cC}{{\mathcal C}}
\newcommand{\cD}{{\mathcal D}}
\newcommand{\D}{{\mathcal D}}
\newcommand{\cs}{{\mathbf C} ^*}
\newcommand{\boldc}{{\mathbf C}}
\newcommand{\cE}{{\mathcal E}}
\newcommand{\cF}{{\mathcal F}}
\newcommand{\bF}{{\mathbf F}}
\newcommand{\cG}{{\mathcal G}}
\newcommand{\G}{{\mathbb G}}
\newcommand{\cH}{{\mathcal H}}
\newcommand{\CI}{{\mathcal I}}
\newcommand{\cJ}{{\mathcal J}}
\newcommand{\cK}{{\mathcal K}}
\newcommand{\cL}{{\mathcal L}}
\newcommand{\baL}{{\overline{\mathcal L}}}

\newcommand{\Mf}{{\mathfrak M}}
\newcommand{\bM}{{\mathbf M}}
\newcommand{\bm}{{\mathbf m}}
\newcommand{\cN}{{\mathcal N}}
\newcommand{\theo}{\mathcal{O}}
\newcommand{\cP}{{\mathcal P}}
\newcommand{\cR}{{\mathcal R}}
\newcommand{\Pp}{{\mathbb P}}
\newcommand{\boldp}{{\mathbf P}}
\newcommand{\boldq}{{\mathbf Q}}
\newcommand{\bbL}{{\mathbf L}}
\newcommand{\cQ}{{\mathcal Q}}
\newcommand{\cO}{{\mathcal O}}
\newcommand{\Oo}{{\mathcal O}}
\newcommand{\cY}{{\mathcal Y}}
\newcommand{\OX}{{\Oo_X}}
\newcommand{\OY}{{\Oo_Y}}
\newcommand{\otY}{{\underset{\OY}{\ot}}}
\newcommand{\otX}{{\underset{\OX}{\ot}}}
\newcommand{\cU}{{\mathcal U}}\newcommand{\cX}{{\mathcal X}}
\newcommand{\cW}{{\mathcal W}}
\newcommand{\boldz}{{\mathbf Z}}
\newcommand{\qgr}{\operatorname{q-gr}}
\newcommand{\gr}{\operatorname{gr}}
\newcommand{\rk}{\operatorname{rk}}
\newcommand{\SH}{{\underline{\operatorname{Sh}}}}
\newcommand{\End}{\operatorname{End}}
\newcommand{\uEnd}{\underline{\operatorname{End}}}
\newcommand{\Hom}{\operatorname{Hom}}
\newcommand{\uHom}{\underline{\operatorname{Hom}}}
\newcommand{\uHomY}{\uHom_{\OY}}
\newcommand{\uHomX}{\uHom_{\OX}}
\newcommand{\Ext}{\operatorname{Ext}}
\newcommand{\bExt}{\operatorname{\bf{Ext}}}
\newcommand{\Tor}{\operatorname{Tor}}

\newcommand{\inv}{^{-1}}
\newcommand{\airtilde}{\widetilde{\hspace{.5em}}}
\newcommand{\airhat}{\widehat{\hspace{.5em}}}
\newcommand{\nt}{^{\circ}}
\newcommand{\del}{\partial}

\newcommand{\supp}{\operatorname{supp}}
\newcommand{\ssupp}{\mathit{ss}}
\newcommand{\GK}{\operatorname{GK-dim}}
\newcommand{\hd}{\operatorname{hd}}
\newcommand{\id}{\operatorname{id}}
\newcommand{\res}{\operatorname{res}}
\newcommand{\lrar}{\leadsto}
\newcommand{\im}{\operatorname{Im}}
\newcommand{\HH}{\operatorname{H}}
\newcommand{\TF}{\operatorname{TF}}
\newcommand{\Bun}{\operatorname{Bun}}

\newcommand{\F}{\mathcal{F}}
\newcommand{\Ff}{\mathbb{F}}
\newcommand{\nthord}{^{(n)}}
\newcommand{\Gr}{{\mathfrak{Gr}}}

\newcommand{\Fr}{\operatorname{Fr}}
\newcommand{\GL}{\operatorname{GL}}
\newcommand{\gl}{\mathfrak{gl}}
\newcommand{\SL}{\operatorname{SL}}
\newcommand{\ff}{\footnote}
\newcommand{\ot}{\otimes}
\def\Ext{\operatorname {Ext}}
\def\Hom{\operatorname {Hom}}
\def\Ind{\operatorname {Ind}}
\def\bbZ{{\mathbb Z}}

\newcommand{\nc}{\newcommand}
\nc{\ol}{\overline} \nc{\cont}{\on{cont}} \nc{\rmod}{\on{mod}}
\nc{\Mtil}{\widetilde{M}} \nc{\wb}{\overline} 
\nc{\wh}{\widehat}  \nc{\mc}{\mathcal}
\nc{\mbb}{\mathbb}  \nc{\K}{{\mc K}} \nc{\Kx}{{\mc K}^{\times}}
\nc{\Ox}{{\mc O}^{\times}} \nc{\unit}{{\bf \on{unit}}}
\nc{\boxt}{\boxtimes} \nc{\xarr}{\stackrel{\rightarrow}{x}}

\nc{\Ga}{\G_a}
 \nc{\PGL}{{\on{PGL}}}
 \nc{\PU}{{\on{PU}}}

\nc{\h}{{\mathfrak h}} \nc{\kk}{{\mathfrak k}}
 \nc{\Gm}{\G_m}
\nc{\Gabar}{\wb{\G}_a} \nc{\Gmbar}{\wb{\G}_m} \nc{\Gv}{G^\vee}
\nc{\Tv}{T^\vee} \nc{\Bv}{B^\vee} \nc{\g}{{\mathfrak g}}
\nc{\gv}{{\mathfrak g}^\vee} \nc{\BRGv}{\on{Rep}\Gv}
\nc{\BRTv}{\on{Rep}T^\vee}
 \nc{\Flv}{{\mathcal B}^\vee}
 \nc{\TFlv}{T^*\Flv}
 \nc{\Fl}{{\mathfrak Fl}}
\nc{\BRR}{{\mathcal R}} \nc{\Nv}{{\mathcal{N}}^\vee}
\nc{\St}{{\mathcal St}} \nc{\ST}{{\underline{\mathcal St}}}
\nc{\Hec}{{\bf{\mathcal H}}} \nc{\Hecblock}{{\bf{\mathcal
H_{\alpha,\beta}}}} \nc{\dualHec}{{\bf{\mathcal H^\vee}}}
\nc{\dualHecblock}{{\bf{\mathcal H^\vee_{\alpha,\beta}}}}
\newcommand{\ramBun}{{\bf{Bun}}}
\newcommand{\ramBuno}{\ramBun^{\circ}}

\nc{\Buntheta}{{\bf Bun}_{\theta}} \nc{\Bunthetao}{{\bf
Bun}_{\theta}^{\circ}} \nc{\BunGR}{{\bf Bun}_{G_\BR}}
\nc{\BunGRo}{{\bf Bun}_{G_\BR}^{\circ}}
\nc{\HC}{{\mathcal{HC}}}
\nc{\risom}{\stackrel{\sim}{\to}} \nc{\Hv}{{H^\vee}}
\nc{\bS}{{\mathbf S}}
\def\BRep{\operatorname {Rep}}
\def\Conn{\operatorname {Conn}}

\nc{\Vect}{{\operatorname{Vect}}}
\nc{\Hecke}{{\operatorname{Hecke}}}

\newcommand{\ZZ}{{Z_{\bullet}}}
\nc{\HZ}{{\mc H}\ZZ} \nc{\eps}{\epsilon}

\nc{\CN}{\mathcal N} \nc{\BA}{\mathbb A}

\nc{\ul}{\underline}

\nc{\bn}{\mathbf n} \nc{\Sets}{{\mathit{Sets}}} \nc{\Top}{{\on{Top}}}
\nc{\IntHom}{{\mathcal Hom}}

\nc{\Simp}{{\mathbf \Delta}} \nc{\Simpop}{{\mathbf\Delta^\circ}}

\nc{\Cyc}{{\mathbf \Lambda}} \nc{\Cycop}{{\mathbf\Lambda^\circ}}

\nc{\Mon}{{\mathbf \Lambda^{mon}}}
\nc{\Monop}{{(\mathbf\Lambda^{mon})\circ}}

\nc{\Aff}{{\on{Aff}}} \nc{\Sch}{{\on{Sch}}}

\nc{\bul}{\bullet}
\nc{\module}{{\operatorname{-mod}}}

\nc{\dstack}{{\mathcal D}}

\nc{\BL}{{\mathbb L}}

\nc{\BD}{{\mathbb D}}

\nc{\BR}{{\mathbb R}}

\nc{\BT}{{\mathbb T}}

\nc{\SCA}{{\mc{SCA}}}
\nc{\DGA}{{\mc DGA}}

\nc{\DSt}{{DSt}}

\nc{\lotimes}{{\otimes}^{\mathbf L}}

\nc{\bs}{\backslash}

\nc{\Lhat}{\widehat{\mc L}}

\newcommand{\Coh}{\on{Coh}}

\nc{\QCoh}{QC}
\nc{\QC}{QC}
\nc{\Perf}{\on{Perf}}
\nc{\Cat}{{\on{Cat}}}
\nc{\dgCat}{{\on{dgCat}}}
\nc{\bLa}{{\mathbf \Lambda}}

\nc{\BRHom}{\mathbf{R}\hspace{-0.15em}\on{Hom}}
\nc{\BREnd}{\mathbf{R}\hspace{-0.15em}\on{End}}
\nc{\colim}{\on{colim}}
\nc{\oo}{\infty}
\nc{\Mod}{\on{Mod} }

\nc\fh{\mathfrak h}
\nc\al{\alpha}
\nc\la{\alpha}
\nc\BGB{B\bs G/B}
\nc\QCb{QC^\flat}
\nc\qc{\on{QC}}

\def\w{\wedge}
\nc{\vareps}{\varepsilon}

\nc{\fg}{\mathfrak g}

\nc{\Map}{\on{Map}} \nc{\fX}{\mathfrak X}

\nc{\ch}{\check}
%\nc{\fg}{\mathfrak g}
\nc{\fb}{\mathfrak b} \nc{\fu}{\mathfrak u} \nc{\st}{{st}}
\nc{\fU}{\mathfrak U}
\nc{\fZ}{\mathfrak Z}

 \nc\fc{\mathfrak c}
 \nc\fs{\mathfrak s}

\nc\fk{\mathfrak k} \nc\fp{\mathfrak p}

\nc{\BRP}{\mathbf{RP}} \nc{\rigid}{\text{rigid}}
\nc{\glob}{\text{glob}}

\nc{\cI}{\mathcal I}

\nc{\La}{\mathcal L}

\nc{\quot}{/\hspace{-.25em}/}

\nc\aff{\it{aff}}
\nc\BS{\mathbb S}

\nc\Loc{{\mc Loc}}
\nc\Tr{{\on{Tr}}}
\nc\Ch{{\mc Ch}}

\nc\ftr{{\mathfrak {tr}}}
\nc\fM{\mathfrak M}

\nc\Id{\operatorname{Id}}

\nc\bimod{\on{-bimod}}

\nc\ev{\operatorname{ev}}
\nc\coev{\operatorname{coev}}

\nc\pair{\operatorname{pair}}
\nc\kernel{\operatorname{kernel}}

\nc\Alg{\operatorname{Alg}}

\nc\init{\emptyset_{\text{\em init}}}
\nc\term{\emptyset_{\text{\em term}}}

\nc\Ev{\on{Ev}}
\nc\Coev{\on{Coev}}

\nc\es{\emptyset}
\nc\m{\text{\it min}}
\nc\M{\text{\it max}}
\nc\cross{\text{\it cr}}
\nc\tr{\on{tr}}

\nc\perf{\on{-perf}}
\nc\inthom{\mathcal Hom}
\nc\intend{\mathcal End}

\newcommand{\Sh}{\mathit{Sh}}

\nc{\Comod}{\on{Comod}}
\nc{\cZ}{\mathcal Z}

\def\interiorsymbol {\on{int}}

\nc\frakf{\mathfrak f}
\nc\fraki{\mathfrak i}
\nc\frakj{\mathfrak j}
\nc\bP{\mathbb P}
\nc\stab{st}
\nc\Stab{St}

\nc\fN{\mathfrak N}
\nc\fT{\mathfrak T}
\nc\fV{\mathfrak V}

\nc\Ob{\on{Ob}}

\nc\fC{\mathfrak C}
\nc\Fun{\on{Fun}}

\nc\Null{\on{Null}}

\nc\BC{\mathbb C}

\nc\loc{\on{Loc}}

\nc\hra{\hookrightarrow}
\nc\fL{\mathfrak L}
\nc\R{\mathbb R}
\nc\CE{\mathcal E}

\nc\sK{\mathsf K}
\nc\sC{\mathsf C}

\nc\Cone{\mathit Cone}

\nc\fY{\mathfrak Y}
\nc\fe{\mathfrak e}
\nc\ft{\mathfrak t}

\nc\wt{\widetilde}
\nc\inj{\mathit{inj}}
\nc\surj{\mathit{surj}}

\nc\Path{\mathit{Path}}
\nc\Set{\mathit{Set}}
\nc\Fin{\mathit{Fin}}

\nc\cyc{\mathit{cyc}}

\nc\per{\mathit{per}}

\nc\sym{\mathit{symp}}
\nc\con{\mathit{cont}}
\nc\gen{\mathit{gen}}
\nc\str{\mathit{str}}
\nc\rsdl{\mathit{res}}
\nc\rel{\mathit{rel}}
\nc\pt{\mathit{pt}}
\nc\naive{\mathit{nv}}
\nc\forget{\mathit{For}}

\nc\sH{\mathsf H}
\nc\sW{\mathsf W}
\nc\sE{\mathsf E}
\nc\sP{\mathsf P}
\nc\sB{\mathsf B}
\nc\sR{\mathsf R}
\nc\sQ{\mathsf Q}
\nc\sL{\mathsf L}
\nc\si{\mathsf i}
\nc\sj{\mathsf j}
\nc\sk{\mathsf k}
\nc\sq{\mathsf q}
\nc\sr{\mathsf r}

\nc\sS{\mathsf S}
\nc\fH{\mathfrak H}
\nc\fP{\mathfrak P}
\nc\fq{\mathfrak q}
\nc\fW{\mathfrak W}
\nc\fE{\mathfrak E}
\nc\sx{\mathsf x}
\nc\sy{\mathsf y}

\nc\ord{\mathit{ord}}

\nc\sm{\mathit{sm}}
\nc\sing{\mathit{sing}}
\nc\internal{\mathit{int}}
\nc\midpt{\mathit{mid}}
\nc\yes{\mathit{yes}}
\nc\no{\mathit{no}}
\nc\trees{\mathit{Trees}}

\nc\rhu{\rightharpoonup}
\nc\dirT{\mathcal T}
\nc\link{\mathit{link}}

%%%%%%%%%%%%%%%%%%%%%%%%%%%%%%%%%%%%%%%%%%%%%%%%%%%%%%%
%%%%%%%%%%%%%%%%%%%%%%%%%%%%%%%%%%%%%%%%%%%%%%%%%%%%%%%

\title[Arboreal singularities]{Arboreal singularities
}

\author{David Nadler}
\address{Department of Mathematics\\University of California, Berkeley\\Berkeley, CA  94720-3840}
\email{nadler@math.berkeley.edu}

\begin{abstract}
We introduce a class of combinatorial singularities of Lagrangian skeleta of  symplectic manifolds. The link of each singularity is   a finite regular cell complex homotopy equivalent to a bouquet of spheres. It is determined by its face poset which
 is naturally constructed starting from  a tree (nonempty finite acyclic graph). The choice of a root vertex of the tree leads to a natural  front projection of the singularity along with an   orientation of the edges of the tree.
Microlocal sheaves along the singularity, calculated via the front projection, are equivalent to modules over the quiver given by the directed tree.
\end{abstract}

\maketitle

%%%%%%%%%%%%%%%%%%%%%%%%%%%%%%%%%%%%%%%%%%%%%%%%%%%%%%%
%%%%%%%%%%%%%%%%%%%%%%%%%%%%%%%%%%%%%%%%%%%%%%%%%%%%%%%

\tableofcontents

%%%%%%%%%%%%%%%%%%%%%%%%%%%%%%%%%%%%%%%%%%%%%%%%%%%%%%%
%%%%%%%%%%%%%%%%%%%%%%%%%%%%%%%%%%%%%%%%%%%%%%%%%%%%%%%
%%%%%%%%%%%%%%%%%%%%%%%%%%%%%%%%%%%%%%%%%%%%%%%%%%%%%%%

\section{Introduction}

This paper is part of a project devoted to combinatorial models of symplectic topology,
in particular of singular Lagrangian skeleta.
After a  summary of our main results immediately below, we  discuss in Section~\ref{s motivation}  the subsequent development of the theory
~\cite{Nexp},  which proves a refined version of a conjecture of
Kontsevich~\cite{kont}, and has applications to mirror symmetry~\cite{N3dlg, Nwms}. 
On the one hand, this paper introduces the main objects and core calculations and is essential to what follows. 
%appealing to the current paper, the calculation of categorical quantizations may be performed via the combinatorics of modules over 
On the other hand, by design, this paper can be read independently of further developments and with a minimal amount of geometric background:
its constructions are of a combinatorial nature, and its results give  elementary  realizations  of microlocal invariants. 
Its main results include calculations of microlocal sheaves where the answer can be viewed as  an appealing alternative  to  traditional technical definitions.  Low-dimensional examples of the main objects also arise naturally in recent  advances in Legendrian knot theory found in~\cite{STZ,
NRSSZ} and related work.  

\subsection{Summary}\label{s summary}
We will 
 introduce a class of combinatorial singularities, first as coarse topological spaces,
 then naturally embedded as Legendrian singularities.

Our starting point
is a {\em tree} $T$ in the sense of a nonempty finite connected acyclic graph.

\begin{figure}[h!]
\includegraphics[scale=0.5]{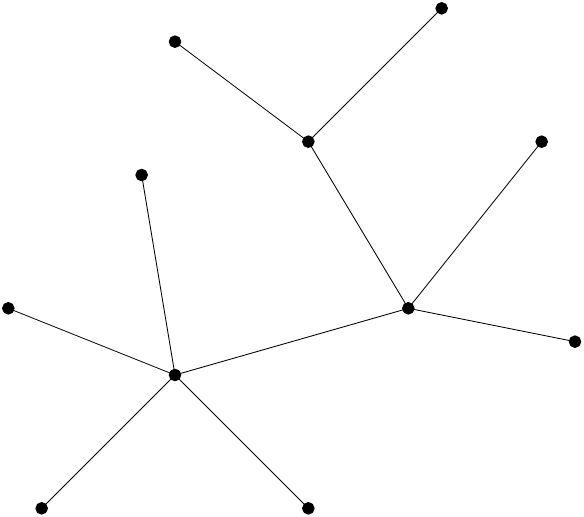}
\caption{Example of a tree $T$.}
\end{figure}
%It is also possible to encode $\fP_T$ as certain partitions of  $T$ (see Remarks~\ref{rem partition} and \ref{rem partition order}).

To each tree $T$, we associate a stratified space $\sL_T$  called an {\em arboreal singularity}. It is of pure dimension $|T| - 1$ where we write $|T|$ for  the number of vertices of $T$.
It comes equipped with a compatible   metric and    contracting $\R_{>0}$-action with a single fixed point.  We refer to the compact subspace $\sL_T^\link \subset \sL_T$ of points unit distance from the fixed point
as the {\em arboreal link}. The   $\R_{>0}$-action provides   a canonical identification
$$
\xymatrix{
\sL_T \simeq \Cone(\sL_T^\link)
}
$$
so that one can regard the arboreal singularity $\sL_T$ and arboreal link $\sL_T^\link$ as  respective local models for a normal slice and normal link to a stratum in a stratified space. It follows easily from the constructions that the arboreal link $\sL_T^\link$ is homotopy equivalent to a bouquet of $|T|$ spheres each of dimension $|T|-1$.

As a stratified space, the arboreal link $\sL_T^\link$, and hence the arboreal singularity $\sL_T$ as well, admits a simple  combinatorial description. 
 To each tree $T$, there is a natural finite poset $\fP_T$ whose elements are correspondences of trees
$$
\xymatrix{
\fp=(R &  \ar@{->>}[l]_-q  S \ar@{^(->}[r]^-i & T)
}
$$
where $i$ is the inclusion of  a subtree and $q$ is a quotient  of trees. More precisely, the tree $S$ is the full subgraph (or vertex-induced subgraph) on a subset of vertices of $T$; the tree $R$ results from contracting a subset of edges of $S$. 
Two such correspondences   
$$
\xymatrix{
\fp=(R &  \ar@{->>}[l]_-q  S \ar@{^(->}[r]^-i & T)
&
\fp'=(R' &  \ar@{->>}[l]_-{q'}  S' \ar@{^(->}[r]^-{i'} & T')
}
$$
satisfy $\fp\geq  \fp'$ 
if there is another  correspondence of the same form
$$
\xymatrix{
\fq=(R &  \ar@{->>}[l]  Q \ar@{^(->}[r] & R')
}
$$
 such that $\fp = \fq\circ\fp'$. In particular, the poset $\fP_T$ contains a unique minimum representing the identity correspondence
 $$
\xymatrix{
\fp_0=(T &  \ar@{->>}[l]_-=  T \ar@{^(->}[r]^-= & T)
}
$$

Recall that a {\em finite regular cell complex} is a Hausdorff space $X$
with a finite collection of closed cells $c_i \subset X$ whose interiors $c_i^\circ \subset c_i$ provide a partition of $X$ and boundaries $\partial c_i \subset X$
are unions of cells. A finite regular cell complex $X$ has the {\em intersection property} if the intersection of any two cells $c_i, c_j\subset X$ is 
either another cell or empty. The {\em face poset} of a finite regular cell complex $X$ is the poset with elements the cells of $X$ with
relation $c_i\leq c_j$ whenever $c_i \subset  c_j$. The {\em order complex} of a poset is the natural simplicial complex with simplices the finite totally-ordered chains of the poset. 
%Finally, recall that a finite regular cell complex can be recovered from its face poset by taking its order complex. 
(Useful references include~ \cite{bb, bjorner, wachs}.)

As topological spaces, arboreal singularities take the following simple combinatorial form.
If we were only interested in their topology, we could take the below description as definition.
Instead, we will approach them with a geometric  construction that leads to their natural realization as Legendrian singularities.

\begin{thm}
Let $T$ be a tree.

The arboreal link $\sL_T^\link$ is a finite regular cell complex, with the intersection property, with face poset $
 \fP_T\setminus \{\fp_0\}$, and  thus  homeomorphic to the order complex of $\fP_T\setminus \{\fp_0\}$.
\end{thm}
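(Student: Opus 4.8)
The homeomorphism with the order complex of $\fP_T\setminus\{\fp_0\}$ is a formal consequence of the other assertions: any finite regular cell complex is homeomorphic --- indeed PL-homeomorphic, its order complex being the first barycentric subdivision --- to the order complex of its face poset. So the task is to exhibit on $\sL_T^\link$ the structure of a finite regular cell complex whose face poset is $\fP_T\setminus\{\fp_0\}$ and which has the intersection property. The plan is to argue by induction on $|T|$; the case $|T|=1$ is trivial, since then $\sL_T$ is the fixed point, $\sL_T^\link=\emptyset$, and $\fP_T=\{\fp_0\}$.

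The first step is to package the geometric construction of $\sL_T$ into a Whitney stratification and identify its strata with $\fP_T$. The construction yields a smooth locally closed stratum $X_\fp\subset\sL_T$ for each correspondence $\fp=(R\twoheadleftarrow S\hookrightarrow T)$ in $\fP_T$; these strata partition $\sL_T$, are preserved by the contracting $\R_{>0}$-action, have $\dim X_\fp=|T|-|R|$ (so that the fixed point is the single $0$-stratum $X_{\fp_0}$), and their closure order coincides with the order on $\fP_T$ --- which is exactly what the composition law ``$\fp\ge\fp'$ iff $\fp=\fq\circ\fp'$'' was designed to encode. A key feature of the construction to be recalled (or verified) is that it is recursive: the normal slice to $X_\fp$ in $\sL_T$ is identified, as a stratified space with $\R_{>0}$-action, with the smaller arboreal singularity $\sL_R$, and under this identification the upper interval $\{\fp''\in\fP_T:\fp''\ge\fp\}$ goes over order-isomorphically onto $\fP_R$ via $\fp''\mapsto\fq$ where $\fp''=\fq\circ\fp$. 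Passing to the link, each $\fp\ne\fp_0$ gives a stratum $Y_\fp:=X_\fp\cap\sL_T^\link$, and the identification $\sL_T\simeq\Cone(\sL_T^\link)$ coming from the $\R_{>0}$-action shows $X_\fp=\R_{>0}\cdot Y_\fp$, with $\overline{X_\fp}$ the closed cone on $\overline{Y_\fp}$; in particular the link of $Y_\fp$ inside $\sL_T^\link$ is exactly $\sL_R^\link$. Since $\fp\ne\fp_0$ forces $|R|<|T|$, the inductive hypothesis applies to $\sL_R^\link$, giving it the structure of a finite regular cell complex with face poset $\fP_R\setminus\{\fp_0\}$, which matches the open upper interval above.

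The main work is then to verify the cell-complex axioms: that each $Y_\fp$ ($\fp\ne\fp_0$) is an open ball, that $\overline{Y_\fp}$ is a closed ball, and that $\partial\overline{Y_\fp}=\bigcup_{\fp_0\ne\fp'<\fp}Y_{\fp'}$ is a subcomplex --- after which the face poset is automatically $\fP_T\setminus\{\fp_0\}$ by the order-matching of the previous step. The inductive local models assemble into a conical structure along each stratum (a neighborhood of $Y_\fp$ is modeled on $Y_\fp$ times the open cone on $\sL_R^\link$, a finite regular cell complex by induction), which yields regularity of the attaching near each $Y_\fp$; and $\overline{Y_\fp}$ is at least contractible, being homotopy equivalent to the order complex of the poset $\{\fp''\in\fP_T:\fp_0\ne\fp''\le\fp\}$, which has a maximum. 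But showing that $\overline{Y_\fp}$ is globally a genuine ball with its boundary a $(\dim Y_\fp-1)$-sphere --- that is, promoting the Whitney stratification to an \emph{honest} regular cell complex --- has to be extracted from the concrete recursive geometry of $\sL_T$, e.g.\ by building $\sL_T$, and hence $\overline{Y_\fp}$, up stratum by stratum, or by producing an explicit PL structure. This is the step where I expect the real difficulty to lie; the combinatorics of $\fP_T$ and the intersection property are comparatively routine once this is in place, and the homeomorphism with the order complex is then automatic.

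Finally, the intersection property reduces, once the cell structure is known, to the combinatorial statement that $\fP_T$ is a meet-semilattice: then $\overline{Y_\fp}\cap\overline{Y_{\fp'}}$, which is a priori the union of the closed strata $\overline{Y_{\fp''}}$ over the common lower bounds $\fp''\ne\fp_0$ of $\fp$ and $\fp'$, equals $\overline{Y_{\fp\wedge\fp'}}$ when $\fp\wedge\fp'\ne\fp_0$ and is empty otherwise. To see that $\fP_T$ is a meet-semilattice, write $\fp=(R\twoheadleftarrow S\hookrightarrow T)$ and $\fp'=(R'\twoheadleftarrow S'\hookrightarrow T)$, let $S_0$ be the smallest subtree of $T$ containing $S\cup S'$ (intersections and minimal enclosing subtrees of subtrees of a tree are again subtrees), and let $F_0\subseteq E(S_0)$ be the largest subset such that no edge of $F_0$ has exactly one endpoint in $V(S)$ or exactly one endpoint in $V(S')$, and such that $F_0\cap E(S)$ is contracted by $\fp$ while $F_0\cap E(S')$ is contracted by $\fp'$; such a largest $F_0$ exists because the collection of subsets of $E(S_0)$ satisfying these conditions is closed under unions. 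One checks that the resulting correspondence $(S_0/F_0\twoheadleftarrow S_0\hookrightarrow T)$ lies below both $\fp$ and $\fp'$ and dominates every common lower bound, so it is the meet $\fp\wedge\fp'$.
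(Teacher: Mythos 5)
Your reduction of the order-complex statement to the regular-cell-complex statement, and your stratification set-up (strata indexed by $\fP_T$, closure order equal to the poset order, normal slices modeled on $\sL_R$) do match the content of the paper's two propositions preceding the theorem. But you leave unproved exactly the step that carries the whole result: that each closed stratum of the link is an honest closed ball whose boundary is a union of smaller cells. You say this ``has to be extracted from the concrete recursive geometry \dots\ this is the step where I expect the real difficulty to lie,'' i.e.\ it is not done, and the local conical models you invoke do not give it (as you acknowledge, local regularity plus contractibility of $\ol{\sL_T^\link(\fp)}$ is not the ball statement). The paper needs no induction on $|T|$ and no stratum-by-stratum assembly here: for a fixed correspondence $\fp=(R \twoheadleftarrow S\hookrightarrow T)$ and \emph{any} vertex $\alpha\in V(S)$, the entire cell $\sL_T(\fp)$ lies in the single Euclidean chart $\sL_T(\alpha)\cong\R^{|V(T)|-1}$, where it is cut out by an explicit list of coordinate equalities and strict inequalities (cases (1)--(5) in the proof of the proposition on strata). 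Hence it is an open convex polyhedral cone, invariant under dilation, its closure is a closed convex polyhedral cone, and the corresponding link cell is a convex spherical polytope, i.e.\ a genuine closed ball with polyhedral boundary. Combined with the companion proposition identifying closure relations with the order on $\fP_T$, the theorem is then quoted from general theory (Bj\"orner, McCrory). The missing idea in your write-up is precisely this convexity/explicit-coordinates observation.

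Your last paragraph also contains a false claim: $\fP_T$ is \emph{not} a meet-semilattice. Take the $D_4$-tree with center $b$ and leaves $a,c,d$, and let $\fp=(\{a\}\twoheadleftarrow\{a\}\hookrightarrow T)$, $\fp'=(\{c\}\twoheadleftarrow\{c\}\hookrightarrow T)$. In the partition language, both the partition with fibers $\{a\},\{c\},\{b,d\}$ and no complementary parts, and the partition with fibers $\{a\},\{b\},\{c\}$ and complementary part $\{d\}$, are common lower bounds of $\fp$ and $\fp'$, and no common lower bound dominates both: such an element would need a complementary part containing $d$ and simultaneously a part containing $\{b,d\}$, which is impossible. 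Your construction of a largest candidate breaks down here because a fiber of a lower bound (such as $\{b,d\}$) may straddle the boundary of the hull of $S\cup S'$, so the element you build with subtree $S_0$ does not dominate it. Geometrically, these two lower bounds are the two antipodal $0$-cells in which the closed $2$-cells of $\sL_{T}^\link$ indexed by $\fp$ and $\fp'$ meet, so no improved meet construction can rescue the reduction as you state it; the paper does not argue the intersection property this way, but folds it into the appeal to the explicit polyhedral cell structure and the cited references.
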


%\begin{remark}
%Recall that a finite poset is {\em shellable} if its order complex is pure-dimensional and 
%its top-dimensional simplices can be ordered so that each one (other than the first) intersects the union of its predecessors in a nonempty union of maximal proper faces. 
%
%
%We show that  $\fP_T\setminus \{\fp_0\}$ is shellable giving an alternative proof that the arboreal link $\sL_T^\link$ is a bouquet of spheres.
%\end{remark}

\begin{remark}
It follows  from the theorem and  the poset structure on $\fP_T$   that the normal slice to the stratum  $\sL_T(\fp) \subset \sL_T$ indexed by a partition
$$
\xymatrix{
\fp=(R &  \ar@{->>}[l]_-q  S \ar@{^(->}[r]^-i & T) %\in \fP_T \setminus\{\fp_0\}
}
$$
is homeomorphic to the arboreal singularity $\sL_R$.
\end{remark}

%\begin{remark}

%\begin{figure}[h!]
%\includegraphics[scale=0.5]{a3.pdf}
%\caption{Arboreal singularity for $A_3$ tree.}
%\end{figure}
%

\begin{example}
Let us highlight the simplest class of trees.

When $T$ consists of a single vertex, $\sL_T$ is a single point.

When $T$ consists of two vertices $v_1, v_2$ (necessarily connected by an edge), $\sL_T$ is the local trivalent graph given by the cone over the three distinct points $\sL_T^\link$ representing the three correspondences
 $$
\xymatrix{
(\{v_1\} &  \ar@{->>}[l]_-= \{v_1\} \ar@{^(->}[r] & T)
&
(\{v_2\} &  \ar@{->>}[l]_-= \{v_2\} \ar@{^(->}[r] & T)
&
(\{v\} &  \ar@{->>}[l] T \ar@{^(->}[r]^-= & T)
}
$$
%\end{remark}

More generally, consider  the class of  $A_n$-trees $T_n$ consisting of $n$ vertices %$v_1, \ldots, v_n$ 
connected by $n-1$ successive edges. % $e_{1,2}, \ldots, e_{n-1, n}$.
The associated arboreal singularity $\sL_{T_n}$ 
admits an identification with the cone of the  $(n-2)$-skeleton of the $n$-simplex
$$
\xymatrix{
\sL_{T_n} \simeq \Cone(sk_{n-2} \Delta^n)
}$$
or in a dual realization, the $(n-1)$-skeleton of the polar fan of the $n$-simplex.
%This follows from the fact that the poset $\fP_{T_n}\setminus \{\fp_0\}$ is isomorphic to the poset of  nonempty subsets of $[n] =\{0, \ldots, n\}$ containing at most $n-1$ elements.
This space
arises in many places (all intimately related to symplectic topology):

\begin{enumerate}
\item  a  tropical hyperplane in $n$-dimensional tropical projective space (\cite{ad, ds, ss}),\footnote
{
We thank E. Zaslow for pointing this out to us, and D. Auroux for noting this perspective appears
in Kontsevich's expectations~\cite{kont}. No doubt it holds significance for mirror symmetry.}

\item  the universal  planar tree  over the $(n-2)$-dimensional associahedron  $K_{n-2}$ (\cite{Stasheff, loday, Seidel}),

\item in  geometric realizations of Waldhausen's $S$-construction in $K$-theory (\cite{DK, DKcyc, Ncyc}).
\end{enumerate}

\end{example}

\begin{figure}[h!]
  \begin{center}  
\includegraphics[width=20em,height=12em]{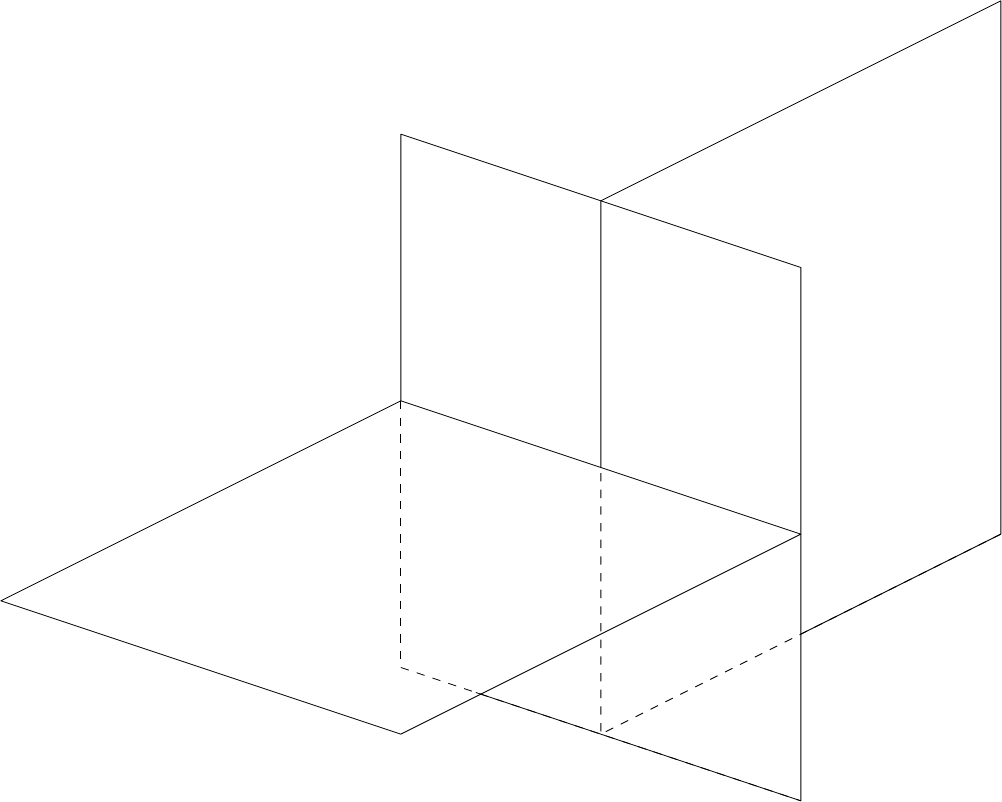}
   \caption{The $A_3$  arboreal singularity.}
   \label{fig: a3}
\end{center}
\end{figure}

\begin{example}
The first  example beyond $A_n$-trees is that of the $D_4$-tree with a central vertex connected to three other vertices.
The corresponding arboreal singularity is the union of a Euclidean space $\R^3$ and three closed Euclidean halfspaces $\R_{\geq 0} \times \R^2$  each glued along its boundary $\R^2 = \partial(\R_{\geq 0} \times \R^2)$ to the Euclidean space $\R^3$ along a distinct coordinate hyperplane $\R^2 \subset \R^3$.
\end{example}

Arboreal singularities  offer a natural generalization of the above 
singularities associated to $A_n$-trees.
% and play a meaningful role
%in the study of  Legendrian and Lagrangian singularities.
We will next explain their appearance as Legendrian singularities  whose front projections
are particularly simple cooriented singular hypersurfaces. (Then in Section~\ref{s motivation} below
we will discuss their related appearance  as Lagrangian singularities.)

To this end, our refined starting point is a {\em rooted tree} $\dirT= (T, \rho)$ in the sense of a tree $T$ together with a distinguished vertex $\rho$ called the root vertex.

%\begin{figure}[h!]
%\includegraphics[scale=0.5]{dirtree.pdf}
%\caption{Example of a rooted tree $\dirT$.}
%\end{figure}

The set of vertices $V(T)$   naturally forms a poset with the root vertex $\rho \in V(T)$ the  unique minimum
and in general $\alpha< \beta\in V(T)$ if the former is nearer to $\rho$ than the latter. 
%To each non-root vertex $\alpha \not = \rho \in V(T)$ there is a unique {\em parent vertex} $\hat \alpha\in V(T)$ such that $\hat\alpha<  %\alpha$ and there are no  vertices strictly between them.

Let us write $\R^\dirT$ for the Euclidean space 
of real tuples
$
\{x_\gamma\}
$
indexed by vertices $\gamma \in V(T)$.
Let us  write  $S^*\R^\dirT$
for its spherically projectivized
cotangent bundle, or equivalently  unit cosphere bundle.
 Points of  $S^* \R^\dirT$ are pairs $(x, [v])$ where $x\in \R^\dirT$
and
 $[v]$ is the positive ray,  or equivalently  unit covector,  in the direction of  $v \not = 0 \in T_x^* \R^\dirT$.
Recall that $S^*\R^\dirT$ is naturally a
 cooriented contact manifold.

To each rooted tree $\dirT= (T, \rho)$, we associate a singular hypersurface $\sH_\dirT\subset \R^\dirT$ called an {\em  arboreal hypersurface}. On the one hand, the arboreal hypersurface $\sH_\dirT\subset \R^\dirT$ admits a homeomorphism with the rectilinear hypersurface
defined by coordinate equalities and inequalities
$$
\xymatrix{
H_\dirT \simeq \bigcup_{\alpha\in V(T)} \{x_\alpha = 0,  x_\beta > 0 \text{ for all } \beta<  \alpha \}   \subset \R^\dirT
}$$
On the other hand, the arboreal hypersurface $\sH_\dirT\subset \R^\dirT$ is in {\em good position} in the sense that it has finitely many normal Gauss directions even across its singularities. Thus 
it defines a {\em conormal Legendrian} $\cL^*_{\sH_\dirT} \subset S^*\R^\dirT$ whose front projection provides a finite surjection 
$$
\xymatrix{
\cL^*_{\sH_\dirT} \ar@{->>}[r] & \sH_\dirT
}$$

The following shows that the arboreal singularity $\sL_T$ associated to a tree $T$ naturally arises as a  Legendrian singularity. The choice of the root vertex $\rho\in V(T)$ plays the role  of a polarization enabling this presentation.

\begin{thm}
Let $\dirT = (T, \rho)$ be a rooted tree. 

The conormal Legendrian $\cL^*_{\sH_\dirT} \subset S^*\R^\dirT$  of the arboreal hypersurface $\sH_\dirT \subset \R^\dirT$ is homeomorphic to the  arboreal singularity $\sL_T$.

\end{thm}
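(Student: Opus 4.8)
The plan is to deduce the theorem from the combinatorial description of $\sL_T$ in the first theorem above, by exhibiting $\cL^*_{\sH_\dirT}$ as a cone whose base is a finite regular cell complex with face poset $\fP_T\setminus\{\fp_0\}$. First I would observe that, in the coordinates furnished by the construction of $\sH_\dirT$, the arboreal hypersurface is invariant under a weighted dilation of $\R^\dirT$ — scaling $x_\gamma$ by a weight that increases as $\gamma$ moves away from the root $\rho$ — which contracts $\R^\dirT$ onto the origin. This lifts to $S^*\R^\dirT$, preserves $\cL^*_{\sH_\dirT}$, and induces on $\cL^*_{\sH_\dirT}$ a contracting $\R_{>0}$-action whose unique fixed point is the conormal covector $[dx_\rho]$ over the origin. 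Hence $\cL^*_{\sH_\dirT}\cong\Cone(L)$, where $L$ is its compact link around that point, and it suffices to identify $L$ with $\sL_T^\link$. By the first theorem above the latter is the finite regular cell complex, with the intersection property, whose face poset is $\fP_T\setminus\{\fp_0\}$; so the goal becomes to put such a cell structure on $L$ and to match face posets.

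The core of the argument is this face-poset identification. The link $L$ carries the stratification inherited from $\cL^*_{\sH_\dirT}$, a stratum being the locus of pairs $(x,[\xi])$ for which $x$ meets a fixed set of the local coordinate-hyperplane branches of $\sH_\dirT$ and $[\xi]$ is the conormal direction of a fixed one among them. To such a stratum I would attach a correspondence $\fp=(R\leftarrow S\hookrightarrow T)$ of $\fP_T$ as follows. Let $S\subseteq T$ be the full subgraph on the set of vertices $\gamma$ for which $\{x_\gamma=0\}$ is a local branch of $\sH_\dirT$ at $x$; from the way $\sH_\dirT$ is assembled along the rooted tree one checks this vertex set is connected, hence a subtree, which supplies $i\colon S\hookrightarrow T$. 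At a point of the stratum the coorienting direction $[\xi]$ is simultaneously a limiting value of the conormal directions of all the branches $\{x_\gamma=0\}$, $\gamma\in V(S)$; good position of $\sH_\dirT$ guarantees finitely many such limits, so the relation on $V(S)$ identifying two vertices whose branches have a common limiting conormal direction is well-defined, and inspection of the local model shows it is generated by contracting a subset of the edges of $S$. Contracting those edges is $q\colon S\twoheadrightarrow R$. I would then check: (i) $\fp\mapsto(\text{stratum})$ is a bijection onto $\fP_T\setminus\{\fp_0\}$ (the excluded $\fp_0$ is the identity correspondence, which labels the cone point $[dx_\rho]$ removed in passing to $L$); (ii) this bijection is an isomorphism of posets, from the face order on the strata of $L$ (one stratum lying in the closure of another) to the order on $\fP_T$ defined by composability of correspondences, a containment of Legendrian strata corresponding to a factorization $\fp=\fq\circ\fp'$; and (iii) each stratum is an open cell of the dimension determined by $\fp$.

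Finally I would show the cell structure on $L$ is regular with the intersection property — closed cells are balls glued along subcomplexes — by induction on $|T|$, the case $|T|=1$ being a point with one-point conormal. From the local structure of $\sH_\dirT$, a neighborhood in $\cL^*_{\sH_\dirT}$ of a point in the stratum labeled $\fp=(R\leftarrow S\hookrightarrow T)$ is homeomorphic to $\R^{\,|T|-|R|}\times\cL^*_{\sH_R}$, where $R$ carries the root induced from $\rho$; this is the Legendrian counterpart of the Remark after the first theorem, which identifies the normal slice to $\sL_T(\fp)$ with $\sL_R$. By induction $\cL^*_{\sH_R}\cong\sL_R$, so $\cL^*_{\sH_\dirT}$ has the same strata, the same poset $\fP_T$, and the same normal slices as $\sL_T$; since $\sL_T$ is, by the first theorem, assembled from these data as a regular cell complex with the intersection property, so is $\cL^*_{\sH_\dirT}$. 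Applying the first theorem once more gives $L\cong\sL_T^\link$, and coning yields $\cL^*_{\sH_\dirT}\cong\sL_T$.

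The step I expect to be the main obstacle is (ii), together with the computation of the limiting conormal directions feeding into $q$: one must read off from the explicit construction of $\sH_\dirT$ precisely which coordinate branches are mutually tangent at each singular point — this is where the choice of root genuinely intervenes, through the tangency pattern it imposes — and then prove that the resulting record of which branches merge in the limit reproduces not merely the set $\fP_T\setminus\{\fp_0\}$ but its composability partial order. Good position is exactly the hypothesis making ``merging in the limit'' a finite, combinatorial notion. By contrast, the conical reduction of the first paragraph and the inductive regularity argument of the last are comparatively formal, given the construction of $\sH_\dirT$ and the first theorem with its Remark.
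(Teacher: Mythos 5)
Your approach is genuinely different from the paper's. The paper proves the theorem by a direct recursive construction: deleting a terminal vertex $\tau$, writing $\sH_\dirT = (\sH_{\dirT_\tau}\times\R^{\{\tau\}})\cup\sH_\tau$, analyzing the coline bundle of this union explicitly (using the comparison Proposition to the rectilinear model and the tangency relation $\sH_\tau^0\subset\sH_{\hat\tau}$), and matching the resulting presentation term-by-term with the decomposition of $\sL_T$ given by Lemma~\ref{lem  term}. You instead propose to show $\cL^*_{\sH_\dirT}$ is a cone over a regular cell complex, compute its face poset, and then invoke Theorem~\ref{thm reg cc} to conclude both spaces are the order complex of $\fP_T\setminus\{\fp_0\}$. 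The paper's argument is more elementary and keeps tight bookkeeping of the homeomorphism down to the constituent sheets $\sL_T(\alpha)\to\sH_\alpha$; yours, if completed, would yield the identification more abstractly via poset data, at the cost of re-deriving the cone structure and cell decomposition on the conormal side from scratch.

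There is, however, a concrete gap in the first step. The smoothed arboreal hypersurface $\sH_\dirT$ is \emph{not} invariant under any weighted dilation of $\R^\dirT$: the defining function $b:\R_{>0}\to\R$ is required to satisfy both $\lim_{t\to 0}b'(t)=-\infty$ and $b(t)=0$ for $t\gg 0$, and no homogeneous function meets both constraints (if $b(rt)=r^d b(t)$ and $b(t_0)=0$ for some $t_0>0$, then $b\equiv 0$, contradicting $b'\to-\infty$). The rectilinear model $H_\dirT$ is scale-invariant, and the comparison map $F_\dirT$ conjugates the standard scaling on $H_\dirT$ to an $\R_{>0}$-action on $\R^\dirT$ preserving $\sH_\dirT$; but $F_\dirT$ is only a homeomorphism (the auxiliary function $\varphi$ has a kink at $x_1=0$), so this action does not lift to the cotangent bundle, and in any case the rectilinear hypersurface is not in good position, so the conormal construction cannot be carried out there directly. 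Thus you cannot conclude a priori that $\cL^*_{\sH_\dirT}$ is a cone without further argument. Relatedly, your description of the stratum invariant $S$ in terms of ``$\{x_\gamma=0\}$ a local branch'' conflates the rectilinear and smoothed pictures: the branches of $\sH_\dirT$ are the $\sH_\alpha=\{h_\alpha=0\}$, and under $F_\dirT$ the correct condition is $F_\dirT(x)\in\partial Q_\alpha$ (i.e.\ $x_\beta\geq 0$ for $\beta\leq\alpha$ with at least one equality), not $x_\alpha=0$. Both points are fixable, but as written they leave real holes. Notice that the paper sidesteps the cone question entirely: the inductive decomposition via Lemma~\ref{lem  term} never needs the cone structure as an input, and the cone structure on $\cL^*_{\sH_\dirT}$ falls out a posteriori from the homeomorphism with $\sL_T$.
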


\begin{example}
An instructive example is that of the $A_3$-tree $T_3$
%$$
%\xymatrix{
%T_3 = (  \bullet  \ar@{-}[r]  & \bullet \ar@{-}[r] & \bullet )
%}$$ 
with its two possible inequivalent rooted structures. On the one hand, we could take one of the two end vertices as root vertex to obtain a  rooted tree. 
%$$
%\xymatrix{
%\dirT'_3 = (  \bullet  \ar[r]  & \bullet \ar[r] & \bullet )
%}$$ 
On the other hand, we could take the middle vertex as root vertex to obtain a rooted tree. 
%$$
%\xymatrix{
%\dirT''_3 = ( \bullet  & \ar[l]  \bullet \ar[r] & \bullet )
%}$$ 
 The resulting 
arboreal hypersurfaces are quite different though
their conormal Legendrians are homeomorphic. 
%If we write $x, y, z$ for coordinates on $\R^3$, then the rectilinear model of the former is given by
%$$
%H_{\dirT'_3} = \{z = 0\} \cup \{z>0, x = 0\} \cup \{z < 0, y =0\} \subset \R^3
%$$
%and the rectilinear model of the latter is given by
%$$
%H_{\dirT''_3} = \{z = 0\} \cup \{z>0, x = 0\} \cup \{z > 0, y =0\}  \subset \R^3
%$$
\end{example}

With the theorem in mind, we will write $\sL_\dirT\subset S^*\R^\dirT$ in place of $\cL^*_{\sH_\dirT} \subset S^*\R^\dirT$,
using the subscript $\dirT$ as opposed to $T$ to emphasize the dependence of the  embedding %as a Legendrian subspace 
on the poset structure.

We will next  calculate the categorical quantization of the Legendrian singularity $\sL_\dirT\subset S^*\R^\dirT$  in the form of microlocal sheaves supported along it. (We recommend the comprehensive book~\cite{KS} for the general notions that appear
in what follows, along with~\cite{KellerICM} and the references therein  for working in a differential graded setting.)

Fix once and for all a field $k$, and let $\Sh(\R^\dirT)$ denote the dg category of cohomologically constructible complexes of sheaves of $k$-vector spaces on $\R^\dirT$.
Recall that to any object $\cF\in \Sh(\R^\dirT)$, one can associate its singular support $\ssupp(\cF) \subset S^*\R^\dirT$. This is a closed Legendrian subspace recording those codirections in which the propagation of sections of $\cF$ is obstructed. 
In particular, one has the vanishing $\ssupp(\cF) = \emptyset$ if and only if the cohomology sheaves of $\cF$ are locally constant.

Introduce  the dg category  $\Sh_{\sL_T}(\R^\dirT)$  of constructible complexes of sheaves of $k$-vector spaces on $\R^\dirT$
microlocalized along 
$
\sL_\dirT \subset S^*\R^\dirT.
$
%There are two equivalent ways to think about $\Sh_{\sL_\dirT}(\R^\dirT)$ in terms of $\Sh(\R^\dirT)$ which we will now explain.
Thanks to the simplicity of the situation, we can concretely work with $\Sh_{\sL_\dirT}(\R^\dirT)$ as the full dg  subcategory of $\Sh(\R^\dirT)$ consisting of objects
$\cF \in \Sh(\R^\dirT)$ with the prescribed singular support and vanishing global sections:
\begin{enumerate}
\item $\ssupp(\cF) \subset \sL_\dirT$, and
\item $\Hom_{\Sh(\R^\dirT)}(k_{\R^\dirT}, \cF) \simeq 0$.
\end{enumerate}

Recall that we can regard the set of vertices $V(T)$ of the rooted tree $\dirT = (T, \rho)$ as a poset with the root vertex $\rho\in V(T)$
the unique minimum. 
To each non-root vertex $\alpha \not = \rho \in V(T)$ there is a unique parent vertex $\hat \alpha\in V(T)$ such that $\alpha> \hat \alpha$ and there are no  vertices strictly between them.

Now let us regard the rooted tree  $\dirT = (T, \rho)$ as a quiver with a unique arrow pointing from each non-root vertex $\alpha \not = \rho \in V(T)$ to its parent vertex
$\hat \alpha\in V(T)$. Symbolically, we replace the relation $\alpha >\hat\alpha$ with the relation $\alpha \to \hat \alpha$.

 Let $ \Mod(\dirT)$ denote the dg derived category of finite-dimensional complexes of modules over  $\dirT$ regarded as a quiver.
 Objects assign to each vertex $\alpha \in V(T)$ a finite-dimensional complex of $k$-vector spaces $M(\alpha)$, and to each arrow $\alpha \to \hat \alpha$ a degree zero chain map $m_\alpha:M(\alpha) \to M({\hat \alpha})$.

\begin{remark}
 Let us point out two natural generating collections for $\Mod(\dirT)$. There are the simple modules $S_\alpha \in \Mod(\dirT)$ that assign 
 $$
S_\alpha(\beta) = \left\{\begin{array}{cl}
k  &  \text{ when } \beta = \alpha\\
0 & \text{ when } \beta\not =\alpha
\end{array}\right.
$$
with all maps $m_\beta:S_\alpha(\beta) \to S_\alpha(\hat \beta)$ necessarily zero.
 There are also the projective modules $P_\alpha \in \Mod(\dirT)$ that assign 
 $$
P_\alpha(\beta) = \left\{\begin{array}{cl}
k  &  \text{ when } \beta \leq \alpha\\
0 & \text{ when } \beta > \alpha
\end{array}\right.
$$ 
with the  maps $m_\beta:P_\alpha(\beta) \to P_\alpha(\hat \beta)$ the identity isomorphism whenever both domain and range are nonzero.

\end{remark}

The categorical quantization of the Legendrian singularity $\sL_\dirT\subset S^*\R^\dirT$ admits the following simple description.

\begin{thm}\label{thm:introquant}
Let $\dirT = (T, \rho)$ be a rooted tree.

The dg category  $\Sh_{\sL_\dirT}(\R^\dirT)$ of constructible complexes microlocalized along 
 $\sL_\dirT\subset S^*\R^\dirT$
 is canonically equivalent to the dg category of modules $\Mod(\dirT)$.
 \end{thm}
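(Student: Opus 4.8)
The plan is to prove the equivalence $\Sh_{\sL_\dirT}(\R^\dirT) \simeq \Mod(\dirT)$ by induction on the number of vertices $|T|$, peeling off a leaf of the tree. First I would set up the base case: if $|T| = 1$, then $\sL_\dirT$ is a point and $\Sh_{\sL_\dirT}(\R^\dirT)$ consists of constructible complexes on $\R^1 = \R^\dirT$ with singular support in the zero section and zero global sections; such a complex is forced to vanish (locally constant with no global sections), matching $\Mod$ of the one-vertex quiver on the nose. Actually it is cleaner to treat $|T|=2$ (the trivalent vertex) as the genuinely first case and identify $\Sh_{\sL_\dirT}(\R^2)$ with $\Mod$ of the $A_2$-quiver by hand, via a microlocal analysis of sheaves on the plane singular along three rays, recovering a well-known computation.

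The inductive step is the heart of the matter. Pick a leaf $\alpha$ of $T$ that is maximal in the poset (a vertex farthest from $\rho$ with a single edge), with parent $\hat\alpha$, and let $T' = T \setminus \{\alpha\}$ be the rooted tree obtained by deleting it. Using the explicit rectilinear model $H_\dirT = \bigcup_{\beta} \{x_\beta = 0, x_\gamma > 0 \text{ for } \gamma < \beta\}$, observe that $\sH_\dirT$ is, in the half-space $\{x_{\hat\alpha} > 0\}$ of $\R^\dirT = \R^{\dirT'} \times \R_{x_\alpha}$, a product of $\sH_{\dirT'}$ restricted to the relevant region with the coordinate line $\R_{x_\alpha}$ together with one extra branch $\{x_\alpha = 0\}$ living over the locus where all $x_\gamma > 0$ for $\gamma < \alpha$; while in the region $\{x_{\hat\alpha} < 0\}$ the branch $\{x_\alpha = 0\}$ is absent and $\sH_\dirT$ is just $\sH_{\dirT'} \times \R_{x_\alpha}$. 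This geometry says the conormal Legendrian $\sL_\dirT$ is, away from one ``new'' codirection, the stabilization (product with a line) of $\sL_{\dirT'}$, and the new piece is exactly the microlocal data of an $A_2$-type (trivalent) singularity attached along the stratum indexed by $\alpha$. Then I would invoke the microlocal Morse/stalk-gluing picture: an object $\cF \in \Sh_{\sL_\dirT}(\R^\dirT)$ is equivalent to the data of its restriction to $\{x_\alpha > 0\}$-, $\{x_\alpha < 0\}$-, and $\{x_\alpha = 0\}$-type regions, which (after the contracting $\R_{>0}$-action lets us work with the link and conical sheaves, as in Theorem above) reduces to: an object of $\Sh_{\sL_{\dirT'}}(\R^{\dirT'})$ — call its value ``at $\hat\alpha$'' the vanishing-cycle/microstalk in the $\hat\alpha$-codirection — together with a complex $M(\alpha)$ and a map $m_\alpha : M(\alpha) \to M(\hat\alpha)$. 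This is precisely the data of a module over $\dirT$ relative to $\dirT'$: adding a source vertex $\alpha \to \hat\alpha$ to the quiver.

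To make that reduction rigorous I would use the following toolkit, all available from~\cite{KS}: (i) the identification of microlocal sheaf categories with limits over an open cover via non-characteristic deformation and the fact that $\ssupp(\cF) \subset \sL_\dirT$ controls which restriction/corestriction maps are isomorphisms; (ii) the conic structure from the $\R_{>0}$-action, so that $\Sh_{\sL_\dirT}(\R^\dirT)$ is computed on the arboreal link and the stratification by $\fP_T \setminus \{\fp_0\}$ gives a combinatorial diagram of microstalks; (iii) the observation that the ``microstalk at $\hat\alpha$'' functor $\Sh_{\sL_{\dirT'}}(\R^{\dirT'}) \to \Mod(k)$ corresponds under the inductive equivalence to evaluation $M \mapsto M(\hat\alpha)$ at the vertex $\hat\alpha$ — this needs to be tracked carefully through the induction, so I would strengthen the inductive hypothesis to record, for each vertex, the functor sending $\cF$ to the microstalk in the corresponding codirection and assert it matches the vertex-evaluation functor. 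Finally, naturality and the ``canonical'' clause: I would exhibit the equivalence as sending the corepresentative of the microstalk-at-$\alpha$ functor to the projective $P_\alpha$ (which is consistent with the Remark: $P_\alpha(\beta) = k$ iff $\beta \le \alpha$, matching the support of the corresponding standard/corepresenting sheaf along the closed stratum), and check it intertwines the recollement triangles.

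The main obstacle I anticipate is step (iii): propagating the identification of microstalk functors with vertex-evaluation functors through the induction in a way that is genuinely natural rather than just an abstract equivalence of categories — i.e. making sure the gluing functor that attaches the new vertex $\alpha$ really is the pullback of $M(\hat\alpha)$ and not some twist of it. Concretely this is a sign/orientation bookkeeping problem about which coorientation of the new branch $\{x_\alpha = 0\}$ one uses, and it is exactly here that the choice of root $\rho$ (hence the orientation $\alpha \to \hat\alpha$ of the edge) enters: the two inequivalent rootings of the $A_3$-tree in the Example above give the two non-isomorphic orientations of the $A_3$-quiver, and one must see that the front projection $\cL^*_{\sH_\dirT} \to \sH_\dirT$ built from $\rho$ produces the arrow pointing toward $\rho$ and not away from it. Everything else — the base case, the product/stabilization invariance of microlocal sheaf categories, the recollement for the open/closed decomposition by $\{x_\alpha \gtrless 0\}$ — is standard microlocal sheaf theory applied to an unusually transparent stratified space.
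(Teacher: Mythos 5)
Your approach is genuinely different from the paper's, and it is worth spelling out the contrast. The paper's proof splits into two steps. First, Proposition~\ref{prop gen} shows that the explicit objects $\cP_\alpha = i_{\alpha!}k_{U_\alpha}$, $\alpha\in V(T)$, generate $\Sh_{\sL_\dirT}(\R^\dirT)$, by running a recollement \emph{upward from the root} inside the single fixed ambient space $\R^\dirT$ (successively cutting along $\{h_\alpha \gtrless 0\}$, taking $\alpha$ in increasing order). Second, Theorem~\ref{thm equiv} computes the full Hom-dg-algebra $\bigoplus_{\alpha,\beta}\Hom(\cP_\alpha,\cP_\beta)$ directly as relative cochain complexes $C^*(U_\alpha\cap \ol{U}_\beta, U_\alpha\cap\partial U_\beta;k)$, checks via the rectilinear model that these vanish unless $\alpha\le\beta$ and are one-dimensional in degree $0$ otherwise with the expected cup-product composition, and concludes by Morita theory that the category is $\Mod(\dirT)$ with $\cP_\alpha\mapsto P_\alpha$. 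Your proposal instead runs an induction on $|T|$ that \emph{changes the ambient space}, peeling off a maximal leaf $\alpha$ and trying to exhibit $\Sh_{\sL_\dirT}(\R^\dirT)$ as a comma/lax-limit construction over the microstalk-at-$\hat\alpha$ functor out of $\Sh_{\sL_{\dirT'}}(\R^{\dirT'})$. That is a legitimate strategy, and it has the conceptual advantage of making the recollement pattern into the structure of the answer rather than a tool for generation. What the paper's route buys is precisely what you identify as your main obstacle: by computing the endomorphism algebra of an explicit generator, the paper never needs to propagate a natural identification of ``microstalk at $\beta$'' with ``evaluation at $\beta$'' through an induction; the canonicity is carried by the named objects $\cP_\alpha$ and the elementary cup-product identities $e_\alpha^\gamma = e_\beta^\gamma\circ e_\alpha^\beta$. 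Your worry about the sign/orientation bookkeeping and the direction of the arrow $\alpha\to\hat\alpha$ is real, and the paper's Hom computation (where the asymmetry between $\alpha\le\beta$ and $\alpha>\beta$ falls out of a contractibility check) resolves it without having to track coorientations through nested microstalks.

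There is one concrete error you should fix before the induction can even get off the ground: the base case $|T|=1$. The category $\Sh_{\sL_\dirT}(\R)$ is \emph{not} zero. The Legendrian $\sL_\dirT$ is the single cooriented conormal point $(0,[dx_\rho])\in S^*\R$, not the zero section, and the object $\cP_\rho = i_{\rho!}k_{U_\rho}$ with $U_\rho = \{x_\rho<0\}$ has $\ssupp(\cP_\rho) = \sL_\dirT$ and $\Hom(k_\R,\cP_\rho)\simeq 0$, yet is visibly nonzero. Indeed $\Sh_{\sL_\dirT}(\R)\simeq\Mod(k)$ — which matches $\Mod$ of the one-vertex quiver, as it must, since that is the derived category of finite-dimensional complexes, not the zero category. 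The phrase ``singular support in the zero section'' misreads the meaning of $\ssupp(\cF)\subset\sL_\dirT$: a sheaf with singular support in this one point need not be locally constant near $0$; it merely has no obstruction to propagation in any codirection except $+dx_\rho$. Once this is corrected the rest of your sketch can proceed, but you should then commit to which vertex is base ($|T|=1$ suffices, and is cleaner than $|T|=2$ once stated correctly) and verify that the stabilization step $\Sh_{\sL_{\dirT'}}(\R^{\dirT'})\risom\Sh_{\sL_{\dirT'}\times\R}(\R^{\dirT'}\times\R_{x_\alpha})$ you invoke interacts compatibly with the vanishing-global-sections condition, since that condition is \emph{not} a local one and must be re-checked after adding the $\R_{x_\alpha}$ factor.
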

 
 \begin{remark}
The dg category $\Mod(\dirT)$ is non-canonically independent of the choice of root vertex and resulting quiver structure. Namely, for a different choice of orientations of arrows, reflection functors~\cite{bgp} provide equivalences between the corresponding module categories. Thus the dg category of  microlocal sheaves along the arboreal singularity $\sL_T$ is non-canonically independent  of its presentation as the conormal Legendrian to a particular arboreal hypersurface. 
\end{remark}

 It is  also possible to describe the natural microlocal restriction functors.
Recall that the normal slice to the stratum  $\sL_\dirT(\fp) \subset \sL_\dirT$ indexed by a partition
$$
\xymatrix{
\fp=(R &  \ar@{->>}[l]_-q  S \ar@{^(->}[r]^-i & T) %\in \fP_T \setminus\{\fp_0\}
}
$$
is homeomorphic to the arboreal singularity $\sL_R$. Note that the quiver structure on $\dirT$ naturally induces  quiver structures on $S$ and $R$ which we denote by $\mathcal S$ and $\mathcal R$ respectively. Under the equivalence of the theorem, the corresponding microlocal restriction functor is the natural composite quotient functor
$$
\xymatrix{
\Mod(\dirT) \ar@{->>}[r]^-{i^*} & \Mod(\mathcal S) \ar@{->>}[r]^-{q_!} &  \Mod(\mathcal R)
}$$
where $i^*$ kills the projective object $P_\alpha\in\Mod(\dirT)$  attached to  $\alpha \in V(T)$ such that $\alpha\not\in i(V(S))$,
and $q_!$ identifies  the projective objects $P_\alpha, P_\beta\in\Mod(\mathcal S)$  attached to $\alpha, \beta\in V(S)$ such that
$q(\alpha) = q(\beta) \in V(R)$.

\subsection{Motivation}\label{s motivation}
We  briefly discuss here the role of this paper in a broader undertaking.
 %via deformations of Lagrangian skeleta. 
The definitions and discussion of this section will not be used in the rest of the paper.

 Our primary aim is to construct a
 combinatorial model and computational tool for  the  ``quantum category" of $A$-branes mathematically captured by the
  Kashiwara-Schapira~\cite{KS}  theory of
  microlocal sheaves (topology), the Floer-Fukaya-Seidel theory of wrapped and infinitesimal  Fukaya categories (analysis), and the theory of holonomic modules 
over deformation quantizations (algebra),
 exemplified by $\cD$-modules~\cite{bernstein}. 
In parallel with the cohomology of manifolds, where one has  singular  complexes (topology), Morse and Hodge theory (analysis),  and   de Rham complexes (algebra), 
we seek a parallel to simplicial complexes (combinatorics)  in the study of the  intersection theory of  Lagrangians in  symplectic manifolds. The arboreal singularities of the current paper provide a local model for realizing such a combinatorial model.

To explain this further, let us introduce some basic constructions and useful terminology.

Let $N$ be a co-oriented  contact $2n+1$-dimensional  manifold with contact field $\xi = \ker(\lambda)$ defined by a contact form
$\lambda$.
By a Legendrian subvariety $L \subset N$, 
 we will mean  a closed $n$-dimensional Whitney stratified subspace 
 (satisfying some mild additional properties spelled out in~\cite{Nexp})
 such that $\xi|_Y = 0$, for any submanifold $Y\subset N$ contained within $L$. 
By a Legendrian singularity centered at a submanifold $Y\subset N$, we will mean
the germ along $Y\subset N$ of a Legendrian subvariety containing $Y$ as a closed stratum.

Recall the contact Darboux theorem that any contact manifold $N$ is locally equivalent to 
 the spherical projectivization $S^*\BR^{n+1}$ with its standard contact structure.
 Thus  given a directed tree $\dirT$, with $|\dirT|= n+1 - k$,
 we can view the product $\sL_\dirT \times \BR^k \subset S^*(\BR^\dirT \times \BR^k)$ as a Legendrian singularity within $N$.

For the sake of the current discussion, let us proceed with the following definition which is  more concrete
but less flexible than possible alternatives.

\begin{defn}
A Legendrian subvariety  $L\subset N$ is said to have {\em arboreal singularities}
if its singularity at each of its points is equivalent via a contactomorphism to a  Legendrian singularity of the form $\sL_\dirT \times \BR^k$,
for  a directed tree $\dirT$, with $|\dirT|= n+1 - k$.
\end{defn}

\begin{remark}
If a Legendrian subvariety  $L\subset N$ has arboreal singularities,
then
the dg category of microlocal sheaves on $N$ supported along $L$ can be calculated combinatorially
via Theorem~\ref{thm:introquant} and the functoriality described thereafter.
\end{remark}

In the sequel~\cite{Nexp}, we study arbitrary Legendrian singularities, and prove the following theorem.
The term {\em non-characteristic} in its statement refers to the property that the dg category of microlocal sheaves supported along the Legendrian singularity is unchanged by the deformation. The phrase
{\em degenerate arboreal singularities} refers to a modest variation on arboreal singularities  
discussed in ~\cite{Nexp}. For example, in one dimension,  a trivalent vertex of a graph is an arboreal singularity,
and a univalent vertex is a degenerate arboreal singularity.

\begin{thm}[\cite{Nexp}]\label{thm: nonchar deform}
Any Legendrian singularity admits a non-characteristic deformation to a Legendrian 
subvariety with arboreal and degenerate arboreal singularities.
\end{thm}

Roughly speaking,  to prove the theorem, given a Legendrian singularity, we expand each of its strata into an irreducible component to arrive at a Legendrian  subvariety whose singularities are governed by the combinatorics of the interaction of its irreducible components. 
% Moreover, the deformations are
% non-characteristic in the sense that the family of dg categories of  microlocal sheaves along the Legendrian subvarieties 
% are constant.
 With the theorem  in hand,  the calculation of microlocal sheaves  may be performed 
in terms of finite-dimensional modules over trees appealing to the results of the current paper.
One could compare the situation with Morse theory or resolutions with normal crossing divisors in algebraic geometry,
where complicated singularities are reduced to combinatorial assemblages of simple singularities
to calculate invariants.

Finally, to connect with Lagrangian skeleta, 
let $M$ be an exact symplectic $2n$-dimensional manifold, with symplectic form $\omega$ and primitive $\alpha$.
By an exact Lagrangian subvariety $\Lambda \subset M$, 
 we will mean  a closed $n$-dimensional Whitney stratified subspace (satisfying some mild additional properties),
 admitting a continuous function $f:\Lambda\to \BR$,
 such that 
 for any submanifold $Y\subset M$ contained within $\Lambda$, we have
$\omega|_Y = 0$, and $f|_Y$ is differentiable with $d(f|_Y) = \alpha|_Y$, 
   By a Lagrangian singularity centered at a submanifold $Y\subset M$, we will mean
the germ along $Y\subset M$ of a Lagrangian subvariety containing $Y$ as a closed stratum.

Now let us set $N = M \times \BR$ to be the contactification of $M$ with 
 contact field $\xi = \ker(\lambda)$ defined by the contact form
$\lambda = dt + \alpha$, where we write $t$ for a coordinate on $\BR$.
Then any exact Lagrangian subvariety $\Lambda \subset M$, equipped with a primitive $f:\Lambda\to \BR$, lifts to a Legendrian subvariety given by the graph 
$$
\xymatrix{
L_{\Lambda, f} =\{(m, -f(m)) \in \Lambda \times \BR\}\subset N
}
$$
Note that alternative primitives will differ from $f$ by a locally constant function on $\Lambda$,
and hence the corresponding lift will differ from $L_{\Lambda, f}$ by a locally constant translation. 
In this way,  we can embed the study of exact Lagrangian singularities and subvarieties into that of 
Legendrian singularities and subvarieties.
 Notably, we may lift Lagrangian skeleta to Legendrian subvarieties, and then apply the above theory to their singularities.

% In particular, we say 
%that the exact Lagrangian subvariety
% $\Lambda \subset M$, with primitive $f:\Lambda\to \BR$,
%has arboreal singularities when the Legendrian subvariety $L_{\Lambda, f}\subset N$ does.
%
%
\begin{defn}
An exact Lagrangian subvariety
 $\Lambda \subset M$, with primitive $f:\Lambda\to \BR$,
is said to have {\em arboreal singularities} if the Legendrian subvariety $L_{\Lambda, f}\subset N$ 
has arboreal singularities.
\end{defn}

\begin{remark}
Forming the contactification, or  further forming its symplectification, leaves  invariants such as microlocal sheaves 
with prescribed support unchanged.
\end{remark}

\begin{example}
A basic example of Lagrangian skeleta are ribbon graphs in punctured Riemann surfaces. Such a Lagrangian skeleton has arboreal and degenerate arboreal singularities if and only if each of its vertices is trivalent or univalent.
\end{example}

\begin{example}
A common example of a Lagrangian singularity is given by the union of  two smooth Lagrangian submanifolds intersecting transversely. Thus the geometry is locally modeled by $M= T^*\BR^n \simeq \BR^n \times \BR^n$,
$L_1 = \BR^n \times\{0\}$, $L_2 = \{0\} \times \BR^n$, and $L = L_1 \cup L_2$. This is not an arboreal singularity,
but we may apply the above theory to it. Depending on choices in the algorithm underlying Theorem~\ref{thm: nonchar deform}, what results is one of two possible arboreal Lagrangian subvarieties $L_{\pm} \subset M$ given by the respective union 
$$
\xymatrix{
L_{\pm}   = (L_1 \#_\pm L_2) \cup D_\pm^{n}
}
$$
where $ L_1 \#_\pm L_2 \subset M$ is either the positive or negative Lagrangian surgery, 
$D_\pm^n\subset M$ is the respective    vanishing thimble, 
and they meet along  the respective vanishing sphere 
$ S_\pm^{n-1} = \partial D_\pm^n \subset L_1 \#_\pm L_2 $.
Thus the arboreal Lagrangian subvariety $L_{\pm} \subset M$ is smooth except along $S_\pm^{n-1}$
where  its normal geometry is equivalent to the trivalent vertex of  the $A_2$-arboreal singularity.
In the basic case of dimension $n=1$, we recover the two standard trivalent deformations of a four-valent vertex. 
\end{example}

As a sample first application~\cite{N3dlg}, we apply this circle of  ideas  to an important example
in mirror symmetry: the Landau-Ginzburg $A$-model with background $M= \BC^3$
and superpotential $W= z_1z_2z_3$. (Natural generalizations appear in the later work~\cite{Nlg}.)
Due to the fact that the critical locus $\{dW = 0\} \subset M$ is not smooth or proper,  this Landau-Ginzburg $A$-model
is not easily approached with traditional methods. The main theorem of~\cite{N3dlg} is the calculation of microlocal sheaves along the natural singular Lagrangian thimble  $L = \Cone(T^2)\subset M$, and more basically the construction of a deformation of $L$ to a  Lagrangian skeleton with arboreal singularities. The description  obtained is in the form of a quiver with relations, and immediately equivalent
to the $B$-model of the pair-of-pants $\mathbb P^1 \setminus \{0, 1, \infty\}$ as predicted by mirror symmetry. 

In another direction,
we apply the results 
of this paper and the sequel~\cite{Nexp} to prove a duality theorem for microlocal sheaves~\cite{Nwms}.
We introduce wrapped microlocal sheaves, in parallel with wrapped Fukaya categories,
and develop their basic properties.
Most prominently, we show that traditional microlocal sheaves are equivalent to functionals on wrapped microlocal sheaves,
in analogy with the expected relation of infinitesimal to wrapped Fukaya categories,
or the relation of perfect complexes with proper support to coherent sheaves~\cite{BNP}.
%Returning to a more classical setting, one might also keep in mind the identification of compactly-supported cohomology
%with functionals on Borel-Moore homology.
To prove this, we reduce the global assertion to a local assertion, then apply 
 the results 
of this paper and the sequel~\cite{Nexp} to reduce to arboreal singularities, and ultimately the fact that finite-dimensional modules over trees form a smooth and proper dg category.
 
%Beyond the above developments, 
%there are many further directions of interest, both computational and theoretical. In particular, the moduli of possible polarizations and deformations
%and the resulting different but equivalent combinatorial presentations of categorical quantizations connect with interesting topics in representation theory.

%%%%%%%%%%%%%%%%%%%%%%%%%%%%%%%%%%%%%%%%%%%%%%%%%%%%%%%

\subsection{Acknowledgements}
I thank  D. Auroux, J. Lurie, D. Treumann, L. Williams, and E. Zaslow for their interest, encouragement, and valuable comments. I  also thank D. Ben-Zvi for 
 many inspiring  discussions on a broad range of related and unrelated topics.

I am very grateful to the NSF for the support of grant DMS-1319287.

%%%%%%%%%%%%%%%%%%%%%%%%%%%%%%%%%%%%%%%%%%%%%%%%%%%%%%%
%%%%%%%%%%%%%%%%%%%%%%%%%%%%%%%%%%%%%%%%%%%%%%%%%%%%%%%
%%%%%%%%%%%%%%%%%%%%%%%%%%%%%%%%%%%%%%%%%%%%%%%%%%%%%%%

\section{Arboreal singularities}

%%%%%%%%%%%%%%%%%%%%%%%%%%%%%%%%%%%%%%%%%%%%%%%%%%%%%%%

\subsection{Gluing construction}

By a {\em graph} $G$, we will mean a set of {\em vertices} $V(G)$ and a set of {\em edges} $E(G)$ satisfying the simplest convention  that $E(G)$ is a
subset of the set of two-element subsets of  $V(G)$. Thus  $E(G)$  records whether pairs of distinct elements  of $V(G)$ are connected by an edge or not. We will write $\{\alpha, \beta\} \in E(G)$ and say that $\alpha, \beta \in V(T)$ are {\em adjacent}  if an edge connects them.

By a  {\em tree} $T$, we will mean a nonempty finite connected acyclic graph. Thus for any  vertices $\alpha, \beta\in V(T)$,
there is a unique   minimal path (nonrepeating sequence of  edges) connecting them. Thus it makes sense to call the number of edges in the sequence the {\em distance} between the
vertices.
%(Since we will only consider trees, it  suffices to  adopt  the simplest convention and assume the edge set  is a subset of the set of two-element subsets of the vertex set.)

Fix a tree $T$ with vertex set $V(T)$ and edge set $E(T)$. 
%For each vertex $\alpha\in V(T)$, we will write $V_\alpha(T)$ for the complementary %vertex set $V(T) \setminus \{\alpha\}$. 

\begin{defn}
For each vertex $\alpha\in V(T)$, 
introduce the Euclidean space $\sL_T(\alpha) = \R^{V(T) \setminus \{\alpha\}}$
of  tuples of real numbers
$$
\xymatrix{
\{x_\gamma(\alpha)\}, %\in \R^{V_\alpha(T)}, 
\text{ with } \gamma\in V(T) \setminus \{\alpha\}.
}
$$
\end{defn}

\begin{defn}
For an edge $\{\alpha, \beta\}  \in E(T)$, define the  {\em $\{\alpha, \beta\} $-edge gluing} to be the quotient of the disjoint union of   Euclidean spaces 
$$
\xymatrix{
\left(\sL_T(\alpha) \coprod \sL_T(\beta)\right)/\sim
}$$ 
where we identify points
$\{x_\gamma(\alpha)\} \sim \{x_\gamma(\beta)\}$ whenever the following holds
$$
 \xymatrix{
 x_\beta(\alpha) =  x_\alpha(\beta) \geq 0
 }
 $$
 $$
 \xymatrix{
 x_\gamma(\alpha) = x_\gamma(\beta), \text{ for all } \gamma \not = \alpha, \beta \in V(T).
}
$$
\end{defn}

%Observe that the edge gluing is homeomorphic
%to the product of $ \R^{V(T) \setminus \{\alpha, \beta\}}$ with the cone over three points.

\begin{defn}\label{defn arb sing}

The {\em arboreal singularity} $\sL_T$ associated to a tree $T$ is the quotient  of the disjoint union
of Euclidean spaces
$$
\xymatrix{
\sL_T = \left(\coprod_{\alpha\in V(T)}  \sL_T(\alpha)\right)/\sim
}$$
by the equivalence relation  generated by the  edge gluings for all edges $\{\alpha, \beta\} \in E(T)$.

\end{defn}

\begin{example}
For the  tree $T$ with a single vertex, $\sL_T$ is a single point.
For the tree $T$ with two vertices (necessarily) connected by an edge, $\sL_T$ is the cone over three points.
For a general $A_n$-tree, see Section~\ref{sect an ex} below.
\end{example}

\begin{remark}\label{rem arb strs}
Arboreal singularities inherit two natural structures from their Euclidean space constituents.

(1) The Euclidean metric on each $\sL_T(\alpha)\subset \sL_T$ is respected by the edge gluings and hence induces
a metric on $\sL_T$ whose restriction to each $\sL_T(\alpha)\subset \sL_T$ is the original Euclidean metric.

(2) The positive dilation on each $\sL_T(\alpha)\subset \sL_T$ that sends $\{x_\gamma(\alpha)\}\mapsto \{rx_\gamma(\alpha)\}$,
for $r\in \R_{>0}$, is also 
respected by the edge gluings and hence induces
a  positive dilation on $\sL_T$ whose restriction to each $\sL_T(\alpha)\subset \sL_T$ is the original positive dilation.

The two structures satisfy the following evident compatibility. 

On the one hand, there is a unique fixed point of positive dilation denoted by $0\in \sL_T$
which we
will call the {\em central point} of $\sL_T$.
It is contained in $\sL_T(\alpha)\subset \sL_T$,
for all $\alpha\in V(T)$, with  coordinates satisfying $x_\gamma(\alpha) = 0$, for all $\gamma\in V(T)\setminus \{\alpha\}$.

On the other hand, by the {\em arboreal link} $\sL_T^\link \subset \sL_T$, we will mean the compact subspace of points unit distance from $0\in \sL_T$.

Positive dilation provides  a canonical homeomorphism
$$
\xymatrix{
 \sL_T^\link \times \R_{>0} \ar[r]^-\sim & \sL_T\setminus\{0\}
 }
$$
realizing the arboreal singularity as the cone over the link
$$
\sL_T \simeq \Cone(\sL_T^\link)
$$
where
for any space $X$, the cone is the quotient
$
\Cone(X) = X\times [0, 1) \cup_{X \times \{0\}} \pt.
$ 
\end{remark}

Next we will record two useful lemmas regarding arboreal singularities.

For any $\alpha, \beta\in V(T)$, 
there is a unique minimal path in $T$ connecting them. Suppose the path consists of $k$ edges
with  successive adjacent vertices 
$$
\gamma_0 = \alpha, \gamma_1, \ldots, \gamma_{k-1}, \gamma_k = \beta\in V(T)
$$
When $\alpha, \beta\in V(T)$ are adjacent, so that $k=1$ and there are no intermediate vertices, the following lemma reduces to the $\{\alpha, \beta\} $-edge gluing. 

\begin{lemma}\label{lem intersections}
The 
Euclidean spaces $\sL_T(\alpha)$ and $\sL_T(\beta)$ are glued inside of $\sL_T$  along the closed quadrants
%$$
% \xymatrix{
% x_\beta(\alpha), x_{\gamma_i}(\alpha) \geq 0, \text{ for all $i$}, & \text{ and } & 
% x_\alpha(\beta),  x_{\gamma_i}(\beta) \geq 0, \text{ for all $i$}.
%}
%$$
where we identify points
$\{x_\gamma(\alpha)\} \sim \{x_\gamma(\beta)\}$ whenever
$$
x_{\gamma_1}(\alpha) =  x_{\alpha}(\beta), 
  x_{\gamma_{2}}(\alpha) =  x_{\gamma_{1}}(\beta), 
  \ldots,
  x_{\gamma_{k-1}}(\alpha) =  x_{\gamma_{k-2}}(\beta), 
  x_\beta(\alpha) =  x_{\gamma_{k-1}}(\beta) \geq 0
  $$
  $$
  x_\gamma(\alpha) = x_\gamma(\beta), \text{ for all } \gamma \not =\alpha,  \gamma_1, \ldots, \gamma_{k-1}, \beta\in V(T).
 $$
\end{lemma}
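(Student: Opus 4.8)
The plan is to determine directly the equivalence relation generated by the edge gluings, restricted to the two constituents $\sL_T(\alpha)$ and $\sL_T(\beta)$. The key point is that every identification between a point of $\sL_T(\alpha)$ and a point of $\sL_T(\beta)$ is realized by a canonical chain of edge gluings running along the minimal path $\gamma_0 = \alpha, \gamma_1, \ldots, \gamma_{k-1}, \gamma_k = \beta$; granting this, the assertion follows by iterating the $k=1$ case, which is the definition of the edge gluing.

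First I would record the shape of a single edge gluing. It is convenient to present a point of $\sL_T(\gamma)$ as a tuple $w = (w_\delta)_{\delta \in V(T)}$ with the convention $w_\gamma = 0$. In these terms the $\{\gamma,\gamma'\}$-edge gluing identifies the closed half-space $\{w_{\gamma'} \geq 0\} \subset \sL_T(\gamma)$ with the closed half-space $\{w'_\gamma \geq 0\} \subset \sL_T(\gamma')$ via $w'_\gamma = w_{\gamma'}$ and $w'_\delta = w_\delta$ for $\delta \neq \gamma, \gamma'$ (and $w'_{\gamma'} = 0$). Thus the gluing relation, restricted to $\sL_T(\gamma) \times \sL_T(\gamma')$, is the graph of a homeomorphism between two closed half-spaces; a first consequence, which I will use repeatedly, is that if $(\gamma, u)$ and $(\gamma, u')$ are each glued to the same point of $\sL_T(\gamma')$ across the edge $\{\gamma, \gamma'\}$, then $u = u'$.

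Now suppose $(\alpha, x)$ and $(\beta, y)$ are identified in $\sL_T$, witnessed by a chain $(\delta_0, z^0) \sim (\delta_1, z^1) \sim \cdots \sim (\delta_m, z^m)$ of edge gluings with $(\delta_0, z^0) = (\alpha, x)$ and $(\delta_m, z^m) = (\beta, y)$. The underlying sequence $\delta_0, \ldots, \delta_m$ is a walk in $T$. Whenever it backtracks, $\delta_{i-1} = \delta_{i+1}$, the consequence above forces $z^{i-1} = z^{i+1}$, so the two steps around $\delta_i$ may be deleted to give a shorter chain with the same endpoints; iterating (which terminates, since the length strictly decreases) one reduces to a chain whose underlying walk has no backtracks. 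Recall that a reduced walk in a forest is a simple path, hence equal to the unique minimal path between its endpoints; so after reduction the chain runs $(\gamma_0, z^0) \sim (\gamma_1, z^1) \sim \cdots \sim (\gamma_k, z^k)$ with $z^0 = x$, $z^k = y$, and the $j$-th step taken across the edge $\{\gamma_{j-1}, \gamma_j\}$. The $j$-th step alters only the coordinates indexed by $\gamma_{j-1}$ and $\gamma_j$: it moves the value sitting at $\gamma_j$ — still the original value $x_{\gamma_j}(\alpha)$, untouched by steps $1, \ldots, j-1$, which affect only $\gamma_0, \ldots, \gamma_{j-1}$ — over to the slot $\gamma_{j-1}$, and requires that value to be $\geq 0$. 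Bookkeeping over $j = 1, \ldots, k$ then shows that such a chain exists precisely when the quantities $x_{\gamma_1}(\alpha), \ldots, x_{\gamma_k}(\alpha) = x_\beta(\alpha)$ are all $\geq 0$, and that in that case $y$ is forced to be the tuple with $x_{\gamma_{j-1}}(\beta) = x_{\gamma_j}(\alpha)$ for $j = 1, \ldots, k$ and $x_\gamma(\beta) = x_\gamma(\alpha)$ otherwise — which is exactly the asserted identification, the two closed quadrants in question being the orthants cut out by $x_{\gamma_j}(\alpha) \geq 0$ ($1 \leq j \leq k$) in $\sL_T(\alpha)$ and by $x_{\gamma_i}(\beta) \geq 0$ ($0 \leq i \leq k-1$, $\gamma_0 = \alpha$) in $\sL_T(\beta)$. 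Running the same reduction with $\alpha = \beta$ (where the minimal path is trivial) shows each constituent $\sL_T(\alpha)$ embeds in $\sL_T$, so this genuinely describes a gluing of subspaces.

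I expect the \emph{reduction of chains} to be the only step with real content: it is what rules out further identifications between $\sL_T(\alpha)$ and $\sL_T(\beta)$ passing through some third constituent $\sL_T(\delta)$. It rests on two small facts — that each edge gluing is an honest partial homeomorphism, so backtracks cancel and no spurious collapsing occurs, and that a reduced walk in a tree is simple — after which what remains is the mechanical coordinate computation of iterating the edge gluing $k$ times.
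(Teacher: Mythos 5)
Your proof is correct, and its computational core (iterating the edge gluing along the path and composing the coordinate equations) is the same as the paper's. The organization differs in one respect worth noting: the paper argues by induction on the path length $k$, composing the $(k-1)$-case with the final $\{\gamma_{k-1},\beta\}$-edge gluing, and dispatches the possibility of identifications arising through other constituents with the single remark that, since $T$ is acyclic, the gluings for other edges play no role. You instead analyze the generated equivalence relation directly: each edge gluing is the graph of a bijection of closed half-spaces, so backtracks in a witnessing chain cancel, and a non-backtracking walk in a tree is the unique minimal path; only then do you run the $k$-step coordinate bookkeeping. This buys an explicit justification of exactly the point the paper leaves implicit (and, as a by-product, that each $\sL_T(\alpha)$ embeds in $\sL_T$ and that all the quadrant inequalities $x_{\gamma_1}(\alpha),\dots,x_{\gamma_{k-1}}(\alpha), x_\beta(\alpha)\geq 0$ are genuinely forced, not just the last one displayed), at the cost of a slightly longer argument than the paper's induction.
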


\begin{proof}
Note that since $T$ is acyclic, the  gluings for other edges play no role.

 Let us proceed by induction on $k$. 

For $k=1$, this is simply the $\{\alpha, \beta\} $-edge gluing.

Suppose the assertion is established for $k-1$ so that 
$\sL_T(\alpha)$ and $\sL_T({\gamma_{k-1}})$ are glued inside of $\sL_T$  along the closed quadrants
where we identify points
$\{x_\gamma(\alpha)\} \sim \{x_\gamma(\gamma_{k-1})\}$ whenever
$$
x_{\gamma_1}(\alpha) =  x_{\alpha}(\gamma_{k-1}), 
  x_{\gamma_{2}}(\alpha) =  x_{\gamma_{1}}(\gamma_{k-1}), 
  \ldots,
  x_{\gamma_{k-1}}(\alpha) =  x_{\gamma_{k-2}}(\gamma_{k-1}) \geq 0
  $$
  $$
  x_\gamma(\alpha) = x_\gamma(\gamma_{k-1}), \text{ for all } \gamma \not =\alpha,  \gamma_1, \ldots, \gamma_{k-1}
  \in V(T).
 $$

Then it suffices to observe that the $\{\gamma_{k-1}, \beta\}$-edge gluing  prescribes that 
$\sL_T({\gamma_{k-1}})$ and $\sL_T(\beta)$ are glued inside of $\sL_T$ 
where we identify points
$\{x_\gamma(\gamma_{k-1})\} \sim \{x_\gamma(\beta)\}$ whenever
$$
x_{\beta}(\gamma_{k-1}) =  x_{\gamma_{k-1}}(\beta) \geq 0
  $$
  $$
  x_\gamma(\gamma_{k-1}) = x_\gamma(\beta), \text{ for all } \gamma \not = \gamma_{k-1}, \beta\in V(T).
 $$
Composing equations, we immediately obtain the asserted equations.
\end{proof}

By a {\em terminal vertex} of a tree $T$, we will mean a vertex contained in a unique edge. By an {\em internal vertex}, we will mean a vertex that is not a terminal vertex. (By this convention,  if  $T$ consists of a single vertex alone, then the vertex is an internal vertex.)
% (The case of no edges only occurs if the tree $T$ consists of the vertex $\tau$ alone.) Note that any nonempty tree contains a terminal vertex.

Suppose  $T$ is a tree with $\tau\in V(T)$ a terminal vertex and $\{\tau, \alpha\}\in E(T)$  the unique edge containing $\tau$. Introduce the tree $T_\tau$ where we delete the vertex $\tau$ and the edge $\{\tau, \alpha\}$.

\begin{lemma}\label{lem  term}
There is a canonical homeomorphism
$$
\sL_T \simeq \left(\sL_{T_\tau} \times \R^{\{\tau\}}\right) \coprod_{\sL_{T_\tau}(\alpha) \times \{0\}} \left (\sL_{T_\tau}(\alpha) \times \R^{\{\alpha\}}_{\leq 0 } \right)
$$
\end{lemma}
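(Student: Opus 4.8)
The plan is to peel off the terminal vertex $\tau$ using the concrete Euclidean-space description of arboreal singularities from Definition~\ref{defn arb sing}, and identify the two pieces on the right-hand side as the result of grouping the Euclidean-space constituents $\sL_T(\gamma)$, $\gamma\in V(T)$, into two blocks: those indexed by $\gamma = \tau$ together with the extra coordinate $x_\tau$, versus those indexed by $\gamma\in V(T_\tau)$. First I would set up the bijection on underlying sets. The constituent $\sL_T(\tau) = \R^{V(T)\setminus\{\tau\}} = \R^{V(T_\tau)}$; I claim this is exactly what contributes $\sL_{T_\tau}(\alpha) \times \R^{\{\alpha\}}_{\le 0}$ after noting that the coordinate $x_\alpha(\tau)$ plays the role of the glued coordinate with $x_\tau(\alpha)\ge 0$ along the edge $\{\tau,\alpha\}$ — so the copy of $\sL_T(\tau)$ embedded in $\sL_T$ appears, from the vantage of the vertices near $\alpha$, as a closed halfspace $\{x_\alpha \le 0\}$-worth of directions glued to $\sL_{T_\tau}$ along the hyperplane $x_\alpha = 0$. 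Meanwhile the remaining constituents $\sL_T(\gamma)$ for $\gamma\neq\tau$ each carry a coordinate $x_\tau(\gamma)$, and these assemble — with all the edge gluings of $T$ other than $\{\tau,\alpha\}$, which are precisely the edge gluings of $T_\tau$ — into $\sL_{T_\tau}\times\R^{\{\tau\}}$, where the $\R^{\{\tau\}}$ factor records the common value of the $x_\tau(\gamma)$ (these are identified with one another across all the edge gluings of $T_\tau$, since $\tau\notin V(T_\tau)$, so $x_\tau$ is an inert coordinate on every constituent).

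Next I would check that the gluing locus matches. In $\sL_T$, the only relation linking $\sL_T(\tau)$ to the other constituents is the $\{\tau,\alpha\}$-edge gluing: $\{x_\gamma(\alpha)\}\sim\{x_\gamma(\tau)\}$ when $x_\tau(\alpha) = x_\alpha(\tau)\ge 0$ and $x_\gamma(\alpha) = x_\gamma(\tau)$ for $\gamma\neq\tau,\alpha$. Under the identifications above, $x_\tau(\alpha)$ becomes the positive coordinate $x_\tau \in \R_{>0}$ on (the interior of) the link side but I must be careful: actually $x_\tau(\alpha)\ge 0$ is the defining coordinate of the $\R_{\le 0}$ factor after a sign flip, so the locus $x_\tau(\alpha) = x_\alpha(\tau)$ becomes the boundary condition identifying $\sL_{T_\tau}(\alpha)\times\{0\}$ inside $\sL_{T_\tau}\times\R^{\{\tau\}}$ (where the $0$ is the value of the $x_\tau$-coordinate, reached as $x_\tau(\alpha)\to 0$, i.e. the slice through the $\{\tau,\alpha\}$-wall) with $\sL_{T_\tau}(\alpha)\times\{0\}$ sitting at the origin of the $\R^{\{\alpha\}}_{\le 0}$ factor. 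One then has to verify that this identification is along $\sL_{T_\tau}(\alpha)\times\{0\}$ and nothing more — i.e. that no further relations of $\sL_T$ sneak in — which follows because $T$ is acyclic (Lemma~\ref{lem intersections} and the observation in its proof that the edge gluings for distinct edges do not interact beyond what is forced), so the only wall shared between the $\tau$-block and the $T_\tau$-block is the single edge $\{\tau,\alpha\}$.

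Finally I would assemble these into a homeomorphism: the map is built componentwise from the evident linear (coordinate-permuting, one sign-flip) homeomorphisms between the Euclidean constituents, it is compatible with the equivalence relations by the above, and hence descends to a continuous bijection of quotients; both sides are finite CW-type spaces (cones on finite regular cell complexes, by Theorem~\ref{...} / Remark~\ref{rem arb strs}), so a continuous bijection that is a homeomorphism stratum-by-stratum is a homeomorphism, or more simply one writes down the continuous inverse directly from the same coordinate formulas. The main obstacle, and the step deserving the most care, is the bookkeeping in the second paragraph: correctly matching the sign conventions so that the $\{\tau,\alpha\}$-edge gluing condition $x_\tau(\alpha)\ge 0$ produces the \emph{closed} halfspace factor $\R^{\{\alpha\}}_{\le 0}$ glued along its boundary $\{0\}$, and confirming that the gluing locus on the $\sL_{T_\tau}\times\R^{\{\tau\}}$ side is exactly $\sL_{T_\tau}(\alpha)\times\{0\}$ rather than some larger or smaller subset — this is where acyclicity of $T$ is genuinely used and where a naive argument could go wrong.
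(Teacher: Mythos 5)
Your proposal takes essentially the same route as the paper's proof: separate off the constituent $\sL_T(\tau)$, note that the remaining constituents and their gluings produce $\sL_{T_\tau}\times\R^{\{\tau\}}$, and then observe that the $\{\tau,\alpha\}$-edge gluing attaches $\sL_T(\tau)\simeq\sL_{T_\tau}(\alpha)\times\R^{\{\alpha\}}$ along the $x_\alpha(\tau)\geq 0$ half, so only the closed $\leq 0$ half contributes new points, glued along its boundary. (The ``sign flip'' invoked in your second paragraph is a red herring --- the coordinate $x_\alpha(\tau)$ on $\sL_T(\tau)$ parametrizes the $\R^{\{\alpha\}}_{\leq 0}$ factor with no sign change, and the $\R^{\{\tau\}}$ factor records $x_\tau(\gamma)$ ranging over all of $\R$, not $\R_{>0}$ --- but your final bookkeeping and conclusion are nonetheless correct.)
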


\begin{proof}
The edge gluing for the edge $\{\tau,\alpha\} \in E(T)$ attaches the Euclidean space 
$$
\sL_T(\tau) \simeq \sL_{T_\tau}(\alpha) \times \R^{\{\alpha\}}
$$ to the product  $\sL_{T_\tau} \times \R^{\{\tau\}}$ along the closed subspace
$$
\sL_{T_\tau}(\alpha)  \times \R^{\{\alpha\}}_{\geq 0 } 
\subset  \sL_{T_\tau}(\alpha) \times \R^{\{\alpha\}}
$$ 
The  gluing of the lemma results from removing the redundant open subspace  
$$
\sL_{T_\tau}(\alpha)  \times \R^{\{\alpha\}}_{> 0 }
\subset \sL_{T_\tau}(\alpha) \times \R^{\{\alpha\}}
$$  % $\{x_\alpha(\tau) >0\}   \subset \R^{v_\tau(T)}$
and only attaching the closed complement  
$$
\sL_{T_\tau}(\alpha)  \times \R^{\{\alpha\}}_{\leq 0 }
\subset 
 \sL_{T_\tau}(\alpha) \times \R^{\{\alpha\}}
$$
\end{proof}

\begin{remark}
The choice of a terminal vertex is not canonical, but the collection of all terminal vertices is. For a more invariant statement, one could simultaneously apply the above lemma to all terminal vertices (as long as there are  three or more vertices).
\end{remark}
%
%
%It is useful to formulate the above inductive description for all terminal vertices at once.
%
%Suppose  $T$ is a tree with $\tau(T)\subset  V(T)$ the collection of all terminal vertices.
%Introduce the tree $T_\internal$ obtained by deleting all of the terminal vertices of $T$ along with the edges containing them.
%Assume $T$ contains an internal vertex so that $T_\internal$ is nonempty (this is the case as long as $T$ has at least three vertices).
%
%
%\begin{prop}\label{prop term}
%There is a canonical homeomorphism
%$$
%L_T \simeq (L_{T_\internal} \times \R^{\tau(T)}) \coprod_{\R^{v_\alpha(T_\tau)} \times \{0\}} (\R^{v_\alpha(T_\tau)} \times \R^{\{\alpha\}}_{\leq 0 })
%$$
%\end{prop}
%
%
%\begin{proof}
%Inductively apply the previous lemma.
%\end{proof}

\begin{corollary}\label{cor bouquet}
The arboreal link $\sL_T^\link$ is homotopy equivalent to a bouquet of $|V(T)|$ spheres each of dimension $|V(T)|-2$.
\end{corollary}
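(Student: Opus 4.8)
The plan is to induct on the number of vertices $n=|V(T)|$, using Lemma~\ref{lem term} — which records how $\sL_T$ changes when a terminal vertex is removed — to propagate the homotopy type of the link; the goal is to show $\sL_T^\link \simeq \bigvee_{n} S^{n-1}$. The base cases $n\le 3$ are handled directly.

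For the inductive step, fix a terminal vertex $\tau$ of $T$ with unique incident edge $\{\tau,\alpha\}$, and let $T_\tau$ be the tree obtained by deleting $\tau$ and that edge, so $|V(T_\tau)| = n-1$. By Lemma~\ref{lem term}, $\sL_T$ is $\sL_{T_\tau}\times\R^{\{\tau\}}$ with the halfspace $\sL_{T_\tau}(\alpha)\times\R^{\{\alpha\}}_{\le 0}$ attached along $\sL_{T_\tau}(\alpha)\times\{0\}$. Remove the central point $0$ and use the contracting dilation of Remark~\ref{rem arb strs} to deformation retract $\sL_T\setminus\{0\}$ onto $\sL_T^\link$; this presents $\sL_T^\link$ as a homotopy pushout whose three pieces are: $(\sL_{T_\tau}\times\R)\setminus\{0\}$, which retracts onto the link of the product at its origin, namely the join $\sL_{T_\tau}^\link \ast S^0 = \Sigma\,\sL_{T_\tau}^\link$; the punctured halfspace, which is contractible; and their overlap $(\sL_{T_\tau}(\alpha)\times\{0\})\setminus\{0\}$, a sphere that sits inside $\Sigma\,\sL_{T_\tau}^\link$ as the link of the Euclidean stratum $\sL_{T_\tau}(\alpha)\subset\sL_{T_\tau}$, lying at the equator of the suspension. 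Consequently $\sL_T^\link$ is, up to homotopy, the suspension $\Sigma\,\sL_{T_\tau}^\link$ with one cell coned on along that equatorial sphere; and because a sphere lying in the equator of a suspension is null-homotopic in the suspension, the coning-on merely adds one further wedge summand, a sphere in the top dimension. Since suspension carries a bouquet of $n-1$ spheres to a bouquet of $n-1$ spheres of one higher dimension, feeding in the inductive hypothesis for $T_\tau$ gives $\sL_T^\link \simeq \bigvee_{n}S^{n-1}$, which closes the induction.

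The step I expect to require the most care — the genuine content — is making this homotopy pushout rigorous: checking that the gluings of Lemma~\ref{lem term} are cofibrations (clear from the underlying polyhedral, regular-cell structure), identifying $\sL_T\setminus\{0\}$ with the doubled model, and, most delicately, using Lemma~\ref{lem intersections} (which pins down exactly which closed quadrants of the Euclidean strata get identified) to locate the overlap sphere inside the equatorial copy of $\sL_{T_\tau}^\link$, so as to conclude it is null-homotopic in the suspension. A second, more routine point is to pass from this homology-level conclusion to an actual homotopy equivalence with a bouquet of spheres: for $n\ge 4$, van Kampen applied to the same three-piece decomposition gives simple connectivity, and then — $\sL_T^\link$ being a finite polyhedral complex — a minimal-cell-structure (Hurewicz) argument identifies it with $\bigvee_{n}S^{n-1}$; the low cases $n\le 3$ having been disposed of by hand.
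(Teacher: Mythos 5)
Your argument is correct and is essentially the paper's own proof: induct by deleting a terminal vertex $\tau$, use the terminal-vertex gluing lemma to present the link as the suspension $\Sigma\,\sL_{T_\tau}^\link$ with a single cell attached along a sphere sitting in an equator, observe the attaching map is therefore null-homotopic, and conclude by the inductive hypothesis that the result is a wedge of $|V(T)|$ spheres. The only differences are cosmetic: your closing van Kampen/Hurewicz paragraph is unnecessary, since null-homotopy of the attaching map already gives the wedge splitting as a homotopy equivalence, and for the record the attached cell and hence the spheres have dimension $|V(T)|-2$ (the dimension of the link), an off-by-one inherited from the statement itself and present in the paper's own proof as well.
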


\begin{proof}
Let us adopt the setting and notation of the previous lemma.
 
 By induction,  $\sL_{T_\tau}^\link$ 
 is homotopy equivalent to a bouquet of $|V(T)|-1$ spheres each of dimension $|V(T)|-3$,
 and hence the suspension
 $$
\xymatrix{
\Sigma (\sL_{T_\tau}^\link) \simeq  (\sL_{T_\tau} \times\R^{\{\tau\}})^\link
}
$$ 
 is homotopy equivalent to a bouquet of $|V(T)|-1$ spheres each of dimension $|V(T)|-2$.

By the previous lemma,  $\sL^\link_T$ results from starting with $\Sigma (\sL_{T_\tau}^\link)$   
and attaching the $(|V(T)|-2)$-cell $L_{T_\tau}(\alpha)$ along the inclusion of its boundary  
$(|V(T)|-3)$-sphere 
$$
 \sL_{T_\tau}(\alpha)^\link 
 \subset \Sigma (\sL_{T_\tau}^\link) % \simeq ( \sL^\link_{T_\tau} \times \R^{\{\tau\}})^\link
$$
induced by the inclusion $ \sL_{T_\tau}(\alpha) 
 \subset \sL_{T_\tau}$.
 Therefore $\sL_T^\link$ is homotopy equivalent to a bouquet of $|V(T)|$ spheres each of dimension $|V(T)|-2$.
 \end{proof}

%%%%%%%%%%%%%%%%%%%%%%%%%%%%%%%%%%%%%%%%%%%%%%%%%%%%%%%

\subsection{Combinatorial description }
Let us first review some terminology.

Given a graph $G$, by a {\em subgraph} $S\subset G$, we will mean a full subgraph (or vertex-induced subgraph) in the sense that its vertices are a subset $V(S) \subset V(G)$ and
its edges are the subset $E(S) \subset E(G)$ such that $\{\alpha,\beta\} \in E(S)$ if and only if $\{\alpha,\beta\}\in E(G)$ and $\alpha, \beta\in V(S)$.
By the {\em complementary subgraph}
$G\setminus S \subset G$, we will mean the full subgraph on the complementary vertices $V(T\setminus S) = V(T) \setminus V(S)$.

Given a tree $T$, any subgraph $S\subset T$ is a disjoint union of trees. By a {\em subtree} $S \subset T$, we will mean a subgraph that is a tree.
The complementary subgraph
$T\setminus S \subset T$ is not necessarily a tree but in general a disjoint union of subtrees.
Given a subtree $S\subset T$, and a vertex $\alpha\in V(T \setminus S)$, there is a unique  vertex $\gamma\in V(S)$
{\em nearest} to $\alpha$.

Given a tree $T$, by a {\em quotient tree} $T\twoheadrightarrow Q$, we will mean a tree $Q$ with a surjection $V(T)\twoheadrightarrow V(Q)$ such that each fiber comprises the vertices of a subtree of $T$. 
We will refer to such subtrees as the {\em fibers} of the quotient $T\twoheadrightarrow Q$.
Given a vertex $\alpha\in V(T)$,
we will sometimes write $\ol\alpha\in V(Q)$ for its image, and $T_{\ol\alpha}\subset T$ for the fiber containing $\alpha$.

By a {\em partition} of a tree $T$, we will mean a  collection of subtrees $T_i\in T$, for $i\in I$, that are disjoint $V(T_i) \cap V(T_j) = \emptyset$, for $i\not = j$, and cover $V(T) = \coprod_{i\in I} V(T_i)$.
Note that  the data of a quotient $T\twoheadrightarrow Q$ is equivalent to the partition of $T$ into  the fibers.

%There is an edge $\{\dot\alpha, \dot\beta\} \in E(Q)$ if and only if there are vertices $\alpha\in V(T_{\dot\alpha}), \beta\in V(T_{\dot\beta})$ and an edge $\{\alpha, \beta\} \in E(T)$.
%Observe that for $\dot\alpha, \dot\beta\in V(Q)$, there is a unique vertex $\alpha\in V(T_{\dot\alpha})$ that is nearest to $T_{\dot\beta}$.
%

%
%We  next describe the natural stratification of arboreal singularities.

\medskip

Now let $T$ be a tree with arboreal singularity $\sL_T$. A  point $x\in \sL_T$ defines the following invariants.

First, we introduce the function 
$$
\xymatrix{
v_x:V(T) \ar[r] & \{\mathit{yes}, \mathit{no}\}\\
}
$$
such that $v_x (\alpha)= \mathit{yes}$ when $x \in  \sL_T(\alpha) \subset \sL_T$, and $v_x(\alpha) = \mathit{no}$ 
 when $x \not \in \sL_T(\alpha)  \subset \sL_T$.

% Let   $V(T)_{\mathit{yes}}\subset V(T)$ denote those vertices $\alpha \in V(T)$ such that $v(\alpha) =  \mathit{yes}$.
% 
%
%Let  $E(T)_{\mathit{yes}} \subset E(T)$ denote those  edges $\ol{\alpha\beta} \in E(T)$ such that $v(\alpha) = v(\beta) = \mathit{yes}$.
%

Define the subgraph $S\subset T $ to consist of those vertices $\alpha\in V(T)$ such that $v_x(\alpha) = \yes$, and those edges $\{\alpha,\beta\}\in E(T)$
such that $v_x(\alpha) = v_x(\beta) = \mathit{yes}$.

\begin{lemma}
$S$ is a tree.
\end{lemma}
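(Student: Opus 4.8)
The plan is to show that the subgraph $S\subset T$ attached to a point $x\in\sL_T$ is \emph{connected}; since $S$ is a subgraph of the acyclic graph $T$, connectedness immediately forces it to be a tree. So the entire content is connectivity of the vertex set $\{\alpha : v_x(\alpha)=\yes\}$ together with all edges of $T$ between such vertices (note the edge condition is automatic: any edge of $T$ joining two ``yes'' vertices lies in $S$ by definition).

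First I would reduce to understanding, for two vertices $\alpha,\beta$ with $v_x(\alpha)=v_x(\beta)=\yes$, why every intermediate vertex $\gamma_1,\dots,\gamma_{k-1}$ on the unique minimal path from $\alpha$ to $\beta$ also satisfies $v_x(\gamma_j)=\yes$. This is exactly the situation controlled by Lemma~\ref{lem intersections}: since $x\in\sL_T(\alpha)$ and $x\in\sL_T(\beta)$, the point $x$ lies in the described gluing locus of $\sL_T(\alpha)$ and $\sL_T(\beta)$, which is the closed quadrant cut out by the chain of equalities
$$
x_{\gamma_1}(\alpha)=x_\alpha(\beta),\ \ x_{\gamma_2}(\alpha)=x_{\gamma_1}(\beta),\ \dots,\ \ x_\beta(\alpha)=x_{\gamma_{k-1}}(\beta)\ \geq 0,
$$
with the remaining coordinates identified. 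The key point is the common nonnegativity of all these numbers. I would then unwind what ``$x\in\sL_T(\gamma_j)$'' means: tracing through the edge gluings for the edges $\{\alpha,\gamma_1\},\dots,\{\gamma_{j-1},\gamma_j\}$ one sees that a point of $\sL_T(\alpha)$ lying in this quadrant — in particular satisfying $x_{\gamma_1}(\alpha)\ge 0,\ x_{\gamma_2}(\alpha)\ge 0,\dots$ — is precisely a point that has been glued to a point of $\sL_T(\gamma_j)$. Concretely, iterating the $\{\gamma_{i-1},\gamma_i\}$-edge gluing along the path and using that all the relevant coordinates of $x$ are $\ge 0$ exhibits $x$ as an image of a point of $\sL_T(\gamma_j)$, i.e. $v_x(\gamma_j)=\yes$.

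With that established, connectivity of $S$ follows formally: given any $\alpha,\beta\in V(S)$, the minimal $T$-path from $\alpha$ to $\beta$ has all its vertices in $S$ (by the previous paragraph) and hence all its edges in $S$ (by the definition of $S$, each such edge joins two ``yes'' vertices), so it is a path in $S$. Thus $S$ is connected, and being a subgraph of an acyclic graph it is acyclic, hence a tree. (It is nonempty because $x$ always lies in at least one $\sL_T(\alpha)$.)

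The main obstacle is the second step: carefully extracting from Lemma~\ref{lem intersections} — whose statement describes the gluing of the \emph{two} spaces $\sL_T(\alpha)$ and $\sL_T(\beta)$ — the intermediate conclusion about membership in $\sL_T(\gamma_j)$ for the in-between vertices. One has to keep track of the bookkeeping of coordinate relabelings ($x_\bullet(\alpha)$ versus $x_\bullet(\gamma_j)$ versus $x_\bullet(\beta)$) and verify that the single sign condition ``$\ge 0$'' appearing once in Lemma~\ref{lem intersections} genuinely propagates to the nonnegativity of each coordinate needed to invoke each successive edge gluing along the path; the acyclicity of $T$ (so that distinct edge gluings do not interfere, exactly as noted at the start of the proof of Lemma~\ref{lem intersections}) is what makes this bookkeeping consistent. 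Everything else is routine graph theory.
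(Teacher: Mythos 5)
Your proposal is correct and follows essentially the same route as the paper: reduce to connectivity (acyclicity being automatic for a subgraph of $T$), and use Lemma~\ref{lem intersections} together with the nonnegativity of the chained coordinates to see that $x$ lies in $\sL_T(\gamma_i)$ for every intermediate vertex $\gamma_i$ on the minimal path, so the path stays in $S$. The extra bookkeeping you flag is exactly what the paper's appeal to the (inductive proof of the) intersections lemma supplies, so there is no gap.
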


\begin{proof}
We must show $S$ is connected (it is a subgraph of $T$ so clearly acyclic). Suppose $\alpha, \beta\in V(S)$ so that $x\in
  \sL_T(\alpha) \cap \sL_T(\beta)  \subset \sL_T $. 
  Suppose the unique minimal path in $T$ connecting them consists of $k$ edges
with  successive vertices $\gamma_0 = \alpha, \gamma_1, \ldots, \gamma_{k-1}, \gamma_k = \beta\in V(T)$. 
 By Lemma~\ref{lem intersections}, for all $i= 1, \ldots, k-1$, we see that $x$ is contained
  in the intermediate Euclidean spaces $ \sL_T({\gamma_i})  \subset \sL_T $,  and thus $\alpha, \beta\in V(S)$ are connected by a path in $S$.
\end{proof}

\begin{remark}
Consider the complementary graph $T\setminus S$  
consisting of those vertices $\alpha\in V(T)$ such that $v_x(\alpha) = \no$, and those edges $\{\alpha,\beta\}\in E(T)$
such that $v_x(\alpha) = v_x(\beta) = \no$.
In general, it   is the disjoint union of subtrees   $N_i \subset T$, for $i\in I$, but not necessarily connected. 
%We refer to the connected subtrees $N_i \subset T$, for $i\in I$,
%as the {\em complementary subtrees} of  $T$.
\end{remark}

Let us continue with the invariants of a  point $x\in \sL_T$.

Observe that $\{\alpha,\beta\} \in E(S)$ means $x\in  \sL_T(\alpha)\cap \sL_T(\beta) \subset \sL_T$, and the $\{\alpha,\beta\}$-edge gluing implies an equality of non-negative coordinates
$x_\beta(\alpha) = x_\alpha(\beta) \geq 0$ evaluated at $x$.

Next, we introduce the function 
$$
\xymatrix{
e_x:E(S) \ar[r] & \{0, +\}
}
$$
such that $e_x (\{\alpha,\beta\})= 0$ when $x_\beta(\alpha) = x_\alpha(\beta) = 0$  evaluated at $x$,
and 
 $e_x(\{\alpha,\beta\})= +$ when  $x_\beta(\alpha) = x_\alpha(\beta)> 0$   evaluated at $x$.
%
%Let  $V(T)_0$ denote the  quotient of $V(T_\yes)$
%where we identify
%vertices $\alpha, \beta\in V(T_{\yes})$ that can be connected by a path through edges
%$\ol{\alpha\beta} \in E(T_\yes)$ such that $e(\ol{\alpha\beta}) = + $.

%
%
%
%Let  $E(T)_{0} \subset E(T_\yes)$ denote those edges  $\ol{\alpha\beta} \in E(T_\yes)$ such that $e(\ol{\alpha\beta}) = 0 $.
%
%%Let  $E(T)_{+} \subset E(T)_\yes$ denote those edges  $\ol{\alpha\beta} \in E(T)_\yes$ such that $e(\ol{\alpha\beta}) = + $.
%

Define the tree $R$ to be the quotient of $S$ where we contract those edges
  $\{\alpha,\beta\} \in E(S)$ such that $e(\{\alpha,\beta\}) = +$. 
  Thus the vertex set
    $V(R)$ is the  quotient of $V(S)$
where we identify vertices
 $\alpha, \beta\in V(S)$ that can be connected by a path through edges
$\{\alpha,\beta\} \in E(S)$ such that $e_x(\{\alpha,\beta\}) = + $. 
The edge set $E(R)$ can be taken to 
consist of those edges  $\{\alpha,\beta\} \in E(S)$ such that $e(\{\alpha,\beta\}) = 0 $.

Altogether, we see that  the  point $x\in L_T$ defines a correspondence of trees
$$
\xymatrix{
R &  \ar@{->>}[l]_-q  S \ar@{^(->}[r]^-p & T
}
$$
where $p$ is the inclusion of  a subtree, and $q$ is a quotient map of trees.
Here and in what follows, we take such correspondences up to the strictest  notion of equivalence:
two such correspondences 
$$
\xymatrix{
R &  \ar@{->>}[l]_-q  S \ar@{^(->}[r]^-p & T
&
R &  \ar@{->>}[l]_-{q'}  S' \ar@{^(->}[r]^-{p'} & T
}
$$
are equivalent if and only if there is a bijection $b:S'\risom S$ such that $q' = q\circ b$, $p' = p \circ b$.

Here is a useful reformulation of the data of such a correspondence.

\begin{lemma}\label{lem:corrpart}
The set of  correspondences
$$
\xymatrix{
R &  \ar@{->>}[l]_-q  S \ar@{^(->}[r]^-p & T
}
$$
where $p$ is the inclusion of  a subtree, and $q$ is a quotient map of trees,
 is in natural bijection with the set of partitions of $T$ into a collection of subtrees 
$$
\left(\{N_i\}_{i\in I}, \{F_{j}\}_{j\in J}\right)
$$ 
such that $J$ is nonempty, 
%(so that $S$ and hence $R$ are nonempty) 
and for any $i \in I$, the complement  $T\setminus  N_i$ is connected.
%(so that $S$ and hence $R$ are connected, and so that each $N_i$, for $i\in I$, is  a connected component of $T\setminus S$).
\end{lemma}

\begin{proof}
Given such a correspondence, define  the  
 subtrees $N_i\subset T$, for $i\in I$,  to be the connected components of the complementary graph $T\setminus S$. 
Define  the  
 subtrees $F_j\subset T$, for $j\in J$, to be the fibers of the quotient map $q:S\twoheadrightarrow R$.
 Since $S$ is nonempty, $J$ is nonempty.
 
 Suppose there is some $i \in I$ such that    $T\setminus  N_i$ is disconnected. 
  Since $S$ is connected,  it lies in one of the components of $T\setminus N_i$.
   But then there is another component of $T\setminus  N_i$ contained in $T\setminus S$ and
   connected to $N_i$, contradicting
 that $N_i$ itself is a component of $T\setminus S$.
 
 Conversely, given such a partition, the full subgraph $S = \coprod_{j\in J} F_j \subset T$ is nonempty since $J$
 is nonempty. Furthermore, $S$ is connected, and hence a subtree, else there is some $i\in I$ such that $T\setminus N_i$ is disconnected. Finally, take the quotient map $q:S\twoheadrightarrow R$  to be that with fibers given by  
 $F_{j} \subset S$, for $j\in J$.
 \end{proof}

We will show that the arboreal singularity $\sL_T$ is the cone over a regular cell complex with each cell the subspace of points leading to a given correspondence.
We will arrive at this in Theorem~\ref{thm reg cc} below, but first observe that such subspaces and their closures are naturally convex polyhedra.

\begin{defn}
Let $T$ be a tree with associated arboreal singularity $\sL_T$.

Define $\sL_T(\fp)\subset \sL_T$ to be the subspace of points leading to a given correspondence
$$
\xymatrix{
\fp = (R &  \ar@{->>}[l]_-q  S \ar@{^(->}[r]^-i & T)
}
$$
where $i$ is the inclusion of  a subtree, and $q$ is a quotient map of trees.

Define the rank $\rho(\fp) = |V(T)| - |V(R)|$.
\end{defn}

\begin{prop}\label{prop strat}
Let $T$ be a tree with associated arboreal singularity $\sL_T$.

The subspace $\sL_T(\fp) \subset \sL_T$ 
of points leading to a given correspondence
$$
\xymatrix{
\fp = (R &  \ar@{->>}[l]_-q  S \ar@{^(->}[r]^-i & T)
}
$$
%of points that lead to a fixed correspondence
%$$
%\xymatrix{
%\fp = (R &  \ar@{->>}[l]_-c  S \ar@{^(->}[r]^-i & T)
%}
%$$
is an open cell of dimension $\rho(\fp) = |V(T)| - |V(R)|$. Its closure is naturally  a convex polyhedron in a Euclidean space, and in fact
cut out by explicit equalities and inequalities on coordinate functions (appearing in the proof below).
%Its closure is naturally the closure of the convex polyhedron. 

\end{prop}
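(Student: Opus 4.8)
The plan is to pin down explicit coordinates on $\sL_T(\fp)$ and its closure using the correspondence data, then read off that it is an open cell with the stated dimension and a convex polyhedron as closure. First I would fix, once and for all, a vertex $\gamma_0 \in V(S)$ and use it as a basepoint to identify a point $x \in \sL_T(\fp)$ with a single tuple of coordinates. Concretely, since $\fp$ determines the subtree $S = \{v_x = \yes\}$ and the value $v_x(\alpha) = \yes$ means $x \in \sL_T(\alpha)$, I would work inside $\sL_T(\gamma_0) = \R^{V(T) \setminus \{\gamma_0\}}$ and express all the gluing identifications through Lemma~\ref{lem intersections}: for any other $\alpha \in V(S)$, the coordinates $\{x_\gamma(\alpha)\}$ are determined by $\{x_\gamma(\gamma_0)\}$ via the path identifications, so a point of $\sL_T(\fp)$ is the same as a tuple $(x_\gamma)_{\gamma \in V(T) \setminus \{\gamma_0\}}$ subject to constraints that I now enumerate.

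The constraints come in three groups, matching the three pieces of data $v_x$, $e_x$, and the resulting quotient. (i) \emph{Sign constraints recording which $\sL_T(\alpha)$ contain $x$}: for each $\alpha \in V(S)$ and each neighbor $\beta$ of $\alpha$ lying \emph{outside} $S$ (i.e. on the $N_i$ side), the edge gluing along the path from $\gamma_0$ toward $\beta$ forces the relevant coordinate to be strictly negative — this is exactly the mechanism in Lemma~\ref{lem  term}, where only the $\R_{\leq 0}$-halfspace is attached. More generally, for a vertex $\delta \in V(T\setminus S)$ nearest vertex $\gamma \in V(S)$, being outside $\sL_T(\delta)$ while inside $\sL_T(\gamma)$ pins a sign. (ii) \emph{Vanishing constraints from $e_x = 0$ edges}: for each edge $\{\alpha, \beta\} \in E(S)$ with $e_x(\{\alpha,\beta\}) = 0$, the common coordinate $x_\beta(\alpha) = x_\alpha(\beta)$ equals $0$; translated through the path identifications these become linear equalities $x_{\gamma_i} - x_{\gamma_{i-1}} = 0$ type relations among the $\gamma_0$-coordinates. (iii) \emph{Positivity constraints from $e_x = +$ edges}: the analogous coordinate is $> 0$, giving strict linear inequalities. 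I would then observe that the equalities in group (ii) cut the ambient $\R^{V(T)\setminus\{\gamma_0\}}$ down to an affine subspace whose dimension is precisely $|V(R)| - 1$'s complement, i.e. $|V(T)| - |V(R)| = \rho(\fp)$ — indeed each contracted edge of $S$ imposes one independent linear equality, and $|V(S)| - |V(R)|$ equals the number of contracted edges, while the vertices of $T \setminus S$ are free up to their sign constraints; a short bookkeeping check (using that $S$, $R$ are trees so edge counts are $|V|-1$) gives the count $\rho(\fp)$. The strict inequalities in groups (i) and (iii) then carve out an open convex subset of that affine subspace, hence an open cell, and relaxing all strict inequalities to non-strict ones gives the closure as a convex polyhedron cut out by explicit linear equalities and inequalities.

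The main obstacle I anticipate is purely organizational rather than conceptual: setting up the path-identifications of Lemma~\ref{lem intersections} carefully enough that the three groups of constraints are genuinely in terms of one fixed coordinate chart and are manifestly linear, and then verifying the independence/dimension count without double-counting. In particular one must check that the equalities from the $e_x = 0$ edges of $S$ together with the definition of $R$ as the contraction along $e_x = +$ edges interact correctly — that the affine subspace they cut out has exactly dimension $\rho(\fp)$ and that within it no inequality from group (i) or (iii) is implied by another (so the cell is genuinely open of that dimension and its closure is a full-dimensional polyhedron in the subspace). I would handle this by induction on $|V(T)|$ using Lemma~\ref{lem  term} to strip off a terminal vertex $\tau$: if $\tau \notin V(S)$ the new coordinate $x_\tau$ is simply free with a sign constraint; if $\tau \in V(S)$ it either contributes a free coordinate (when its edge is an $e_x = +$ edge contracted in $R$) or an edge of $R$ (when $e_x = 0$), in each case changing $\rho(\fp)$ and the polyhedron in the predicted way, so the inductive step is a direct match with the product/gluing description in Lemma~\ref{lem  term}.
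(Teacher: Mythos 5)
Your strategy is essentially the paper's: fix a base vertex $\gamma_0\in V(S)$, use Lemma~\ref{lem intersections} to transport everything into the single chart $\sL_T(\gamma_0)\simeq\R^{V(T)\setminus\{\gamma_0\}}$, and cut out $\sL_T(\fp)$ by explicit sign conditions on coordinates, so that the open-cell and convex-polyhedron claims follow. However, your dimension bookkeeping as written is wrong and inconsistent with your own constraint groups: you say ``each contracted edge of $S$ imposes one independent linear equality, and $|V(S)|-|V(R)|$ equals the number of contracted edges.'' The contracted edges are the $e_x=+$ edges, and by your group (iii) these impose strict inequalities, not equalities; the equalities come from the $e_x=0$ edges, i.e.\ the edges of $S$ surviving in $R$, of which there are $|E(R)|=|V(R)|-1$. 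Your count would give dimension $(|V(T)|-1)-(|V(S)|-|V(R)|)$, which is wrong in general: e.g.\ if no edge is contracted (so $R=S$ with $|V(S)|\geq 2$) it yields $|V(T)|-1$ instead of $|V(T)|-|V(S)|$. The correct count is $(|V(T)|-1)-(|V(R)|-1)=|V(T)|-|V(R)|=\rho(\fp)$. A second, smaller inaccuracy: in the fixed chart the $e_x=0$ conditions are not difference relations of the form $x_{\gamma_i}-x_{\gamma_{i-1}}=0$; by Lemma~\ref{lem intersections} the gluing coordinate of an edge $\{\tilde\gamma,\gamma\}$ of $S$ (with $\gamma$ the endpoint farther from $\gamma_0$) is literally the single coordinate $x_\gamma(\gamma_0)$, so each equality is the vanishing of one coordinate.

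This last point also dissolves the organizational obstacle you anticipate, and is how the paper proceeds: each vertex $\gamma\neq\gamma_0$ receives exactly one condition on its own coordinate $x_\gamma(\gamma_0)$, determined by the last edge of the path from $\gamma_0$ to $\gamma$ --- strictly positive if that edge lies inside a fiber of $q$, zero if it crosses between fibers ($\gamma$ the nearest vertex of another fiber, giving the $|V(R)|-1$ equalities), strictly negative if it is the attaching edge of a component $N_i$ of $T\setminus S$, and no condition if it is internal to some $N_i$. Since distinct vertices constrain distinct coordinates, independence is automatic and no induction via Lemma~\ref{lem term} is needed: the locus is a product of open half-lines, points, and lines, visibly an open convex polyhedral cell of dimension $\rho(\fp)$ whose closure is the corresponding closed polyhedron. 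What does require an argument (and is only gestured at in your sketch) is the ``if and only if'': that these sign conditions exactly characterize the points whose invariants $(v_x,e_x)$ give $\fp$; the paper does this in two passes with Lemma~\ref{lem intersections}, first with weak inequalities to pin down the subtree $S$, then refining the boundary coordinates to zero or strictly positive to pin down the quotient $R$.
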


\begin{proof}
Fix any $\alpha\in V(S)$ and observe that $\sL_T(\fp)\subset \sL_T(\alpha)$ since  $v_x(\alpha) = \yes$.
% for any $x\in \sL_T(\fp)$. 
%We will describe the equalities and inequalities cutting out this subspace.

Recall that points $x\in  \sL_T(\alpha)$ consist of tuples of real numbers
$$
\xymatrix{
\{x_\gamma(\alpha)\}, %\in \R^{V_\alpha(T)}, 
\text{ with } \gamma\in V(T) \setminus \{\alpha\}.
}
$$

% For all $\gamma\in V(T)\setminus\{\alpha\}$, %consider the coordinates $x_\beta(\alpha) : \sL_T(\alpha)\to \R$. 
%Fix a point $x\in \sL_T(\alpha) \subset \sL_T$ with coordinates $\{x_\gamma(\alpha)\}$
% indexed by $\gamma\in V(T) \setminus \{\alpha\}.$

%We first claim that points $x\in \sL_T(\alpha) $ leading to the right half of the correspondence
%$$
%\xymatrix{
%S \ar@{^(->}[r]^-i & T
%}
%$$
%are cut out by the equations:

We claim that those points $x\in  \sL_T(\alpha)$ that lie in $\sL_T(\fp)\subset \sL_T(\alpha)$ are precisely cut out by the 
following equations
on their coordinates $x_\gamma(\alpha)$ depending on the location of $\gamma$.
 
\begin{enumerate}

\item Suppose $\gamma$ lies in the fiber $F_{\ol\alpha}\subset S$ containing $\alpha$. Then we have $x_\gamma(\alpha)>0$.

 \item Suppose $\gamma$ lies in $S$ but not  in the fiber $F_{\ol\alpha}\subset S$ containing $\alpha$. Suppose also that $\gamma$
 is the nearest vertex to $\alpha$ within the fiber $F_{\ol\gamma}\subset S$ containing $\gamma$. Then we have $x_\gamma(\alpha)=0$.

\item Suppose $\gamma$  lies in $S$ but not  in the fiber $F_{\ol\alpha}\subset S$ containing $\alpha$. Suppose also that $\gamma$
 is not the nearest vertex to $\alpha$ in the fiber $F_{\ol\gamma} \subset S$ containing $\gamma$. Then we have $x_\gamma(\alpha)>0$.

\item Suppose $\gamma$ lies in $T\setminus S$ so that $\gamma$ is in some  subtree $N_i\subset T\setminus S$. Suppose also that $\gamma$  is the nearest vertex to $\alpha$ within $N_i$. Then we have $x_\gamma(\alpha)<0$.

\item Suppose $\gamma$ lies in   $T\setminus S$ so that $\gamma$ is in some  subtree $N_i\subset T\setminus S$. Suppose also that $\gamma$  is not the nearest vertex to $\alpha$ within $N_i$. Then we allow $x_\gamma(\alpha)$ to be arbitrary.
\end{enumerate}

To confirm this, on the one hand, by Lemma~\ref{lem intersections}, if $x_{\gamma}(\alpha) <0$, then $x\not\in \sL_T(\gamma)$, 
and so $\gamma$ does not lie in $S$.
 On the other hand, suppose $\gamma$ lies in $T\setminus S$.
Consider the minimal path connecting $\alpha$ and $\gamma$,
and let $\gamma'$ be the closest point to $\alpha$ that lies on the path and in $T\setminus S$.
By Lemma~\ref{lem intersections}, we have $x_{\gamma'}(\alpha)<0$, and 
 $x_{\gamma}(\alpha)$ can be arbitrary if $\gamma \not = \gamma'$.
Thus $x\in \sL_T(\alpha)$ leads to   
the right half of the correspondence
$$
\xymatrix{
S \ar@{^(->}[r] & T
}
$$
if and only if the following coarser equations hold.

\begin{enumerate}

\item[($1'$)] Suppose $\gamma$ lies in the fiber $F_{\ol\alpha}\subset S$ containing $\alpha$. Then we have $x_\gamma(\alpha)\geq 0$.

 \item[($2'$)] Suppose $\gamma$ lies in $S$ but not  in the fiber $F_{\ol\alpha}\subset S$ containing $\alpha$. Suppose also that $\gamma$
 is the nearest vertex to $\alpha$ within the fiber $F_{\ol\gamma}\subset S$ containing $\gamma$. Then we have $x_\gamma(\alpha)\geq 0$.

\item[($3'$)] Suppose $\gamma$  lies in $S$ but not  in the fiber $F_{\ol\alpha}\subset S$ containing $\alpha$. Suppose also that $\gamma$
 is not the nearest vertex to $\alpha$ in the fiber $F_{\ol\gamma} \subset S$ containing $\gamma$. Then we have $x_\gamma(\alpha)\geq 0$.

\item[($4$)] Suppose $\gamma$ lies in $T\setminus S$ so that $\gamma$ is in some  subtree $N_i\subset T\setminus S$. Suppose also that $\gamma$  is the nearest vertex to $\alpha$ within $N_i$. Then we have $x_\gamma(\alpha)<0$.

\item[($5$)] Suppose $\gamma$ lies in   $T\setminus S$ so that $\gamma$ is in some  subtree $N_i\subset T\setminus S$. Suppose also that $\gamma$  is not the nearest vertex to $\alpha$ within $N_i$. Then we allow $x_\gamma(\alpha)$ to be arbitrary.

\end{enumerate}

Now it remains to determine that in cases ($1'$) and ($3'$) the coordinate $x_\gamma(\alpha)$ should be positive as in cases ($1$) and ($3$), and in case ($2'$) it should be zero as in case ($2$). 

Again apply Lemma~\ref{lem intersections} to  see that for $x\in \sL_T(\alpha)$ satifying the above coarser equations, and for  $\gamma\in V(S)$ so that 
$x\in \sL_T(\gamma)$, we have the equality of functions $x_\gamma(\alpha) = x_{\tilde \gamma}(\gamma)$, where $\tilde \gamma \in V(S)$ is the vertex adjacent to $\gamma$
and one edge nearer to $\alpha$. Thus by definition $x\in \sL_T(\alpha)$ also leads to 
the left half of the correspondence
$$
\xymatrix{
R &  \ar@{->>}[l]  S 
}
$$
if and only if 
in cases ($1'$) and ($3'$) the coordinate $x_\gamma(\alpha)$ is positive, and
in case ($2'$) it is zero.

Finally, for the dimension assertion, recall that $\dim \sL_T(\alpha) =|V(T)| -1$, and note that there are precisely $|V(R)|-1$ equalities in the above equations given by case (2).
 \end{proof}

Next we will  calculate the closure relations among the cells.
Consider a pair of correspondences of trees
$$
\xymatrix{
\fq=(P &  \ar@{->>}[l]_-c  Q \ar@{^(->}[r]^-i & R)
&
\fp=(R &  \ar@{->>}[l]_-d  S \ar@{^(->}[r]^-j & T)
}
$$
where $i, j$ are inclusions of  subtrees, and $c, d$ are quotient maps of trees.
Define the composed correspondence
$$
\xymatrix{
\fq\circ \fp = (P &  \ar@{->>}[l]_-{\tilde c}  Q \times_R S \ar@{^(->}[r]^-{\tilde \jmath} & T)
}
$$
by  forming the fiber product
$$
\xymatrix{
 & &\ar@{->>}[dl]_-{\pi_Q}  Q \times_R S \ar@{^(->}[dr]^-{\pi_S} & & \\
& \ar@{->>}[dl]_-c Q  \ar@{^(->}[dr]^-{i}  & &  \ar@{->>}[dl]_-d  S \ar@{^(->}[dr]^-{j}  & \\
P &  & R &   & T
}
$$
and setting $\tilde c = c\circ \pi_Q$, $\tilde \jmath = j \circ \pi_S$, 
where $\pi_Q, \pi_S$ are the natural projections.
%
%\begin{defn}
%Let $\trees$ denote the category whose objects  $T$ are trees, morphisms $f:R\rightsquigarrow T$ are
% correspondences of trees
%$$
%\xymatrix{
%f=(R &  \ar@{->>}[l]_-d  S \ar@{^(->}[r]^-j & T)
%}
%$$
%where $j$ is the inclusion of a  subtree, and $d$ is the contraction of some of its edges,
%and compositions of morphisms are compositions of correspondences.
%\end{defn}
%

\begin{defn}
Let $T$ be a tree.

1) Let $\fP_T$ denote the poset whose elements are
 correspondences of trees
$$
\xymatrix{
\fp=(R &  \ar@{->>}[l]_-d  S \ar@{^(->}[r]^-j & T)
}
$$
where $j$ is the inclusion of a  subtree, and $d$ is a quotient map of trees.

We define the order relation on correspondences %$\fp\geq \fp'$
$$
\xymatrix{
\fp=(R &  \ar@{->>}[l]_-d  S \ar@{^(->}[r]^-j & T) \geq  \fp'=(R' &  \ar@{->>}[l]_-{d'}  S' \ar@{^(->}[r]^-{j'} & T)
}
$$
and say the second correspondence refines the first,
if  there is a 
third correspondences of trees
$$
\xymatrix{
\fq=(R &  \ar@{->>}[l]_-c  Q \ar@{^(->}[r]^-i & R')
}
$$
where $i$ is the inclusion of a  subtree, and $c$ is a quotient map of trees,
such that 
$$
\fp = \fq\circ \fp'.
$$

2) Let $\mathfrak Q_T$ denote the poset whose elements are partitions of $T$ into
a   collection of subtrees 
$$
\left(\{N_i\}_{i\in I}, \{F_{j}\}_{j\in J}\right)
$$ 
such that $J$ is nonempty, 
%(so that $S$ and hence $R$ are nonempty) 
and for any $i \in I$, the complement  $T\setminus  N_i$ is connected.
%(so that $S$ and hence $R$ are connected, and so that each $N_i$, for $i\in I$, is  a connected component of $T\setminus S$).

We define the order relation on partitions
$$
\xymatrix{
(\{N_i\}_{i\in I}, \{F_{j}\}_{j\in J}) \geq (\{N'_k\}_{k\in K}, \{F'_{\ell}\}_{\ell\in L})
}
$$ 
and say the second partition refines the first,
if each $N'_k$ lies in some $N_i$ and each $F'_{\ell}$ lies in some $N_i$ or some $F_j$.
\end{defn}

\begin{remark}
There is the unique minimum  $\fp_0\in \fP_T$ given by the identity correspondence
$$
\xymatrix{
\fp_0=(T &  \ar@{->>}[l]_-=  T \ar@{^(->}[r]^-= & T)
}
$$
It indexes the  cell $\sL_{\fp_0} \subset \sL_T$ comprising the central point $0\in \sL_T$ alone.
% contained in $\sL_T(\alpha)\subset \sL_T$,
%for all $\alpha\in V(T)$, with  coordinates satisfying $x_\gamma(\alpha) = 0$, for all $\gamma\in V(T)\setminus \{\alpha\}$.
\end{remark}

We have the following upgrading of Lemma~\ref{lem:corrpart}.

\begin{lemma}\label{lem:partition order}
The bijection of Lemma~\ref{lem:corrpart} is a poset isomorphism $\fP_T \simeq \mathfrak Q_T$.
\end{lemma}

\begin{proof} Suppose  
$$
\xymatrix{
\fp=(R &  \ar@{->>}[l]_-d  S \ar@{^(->}[r]^-j & T) &
  \fp'=(R' &  \ar@{->>}[l]_-{d'}  S' \ar@{^(->}[r]^-{j'} & T)
}
$$
due to $\fp = \fq\circ \fp'$ with
$$
\xymatrix{
\fq=(R &  \ar@{->>}[l]_-c  Q \ar@{^(->}[r]^-i & R')
}
$$
Then the corresponding partitions satisfy
$$
\xymatrix{
(\{N_i\}_{i\in I}, \{F_{j}\}_{j\in J}) \geq (\{N'_k\}_{k\in K}, \{F'_{\ell}\}_{\ell\in L})
}
$$
since $j$ factors through $j'$, so each $N'_k$ lies in some $N_i$, and $d$ factors through the base change of $d'$
by the inclusion $i$,
so each $F_\ell'$ lies in some $F_j$.

Conversely, suppose
$$
\xymatrix{
(\{N_i\}_{i\in I}, \{F_{j}\}_{j\in J}) \geq (\{N'_k\}_{k\in K}, \{F'_{\ell}\}_{\ell\in L})
}
$$
and consider the corresponding correspondences 
$$
\xymatrix{
\fp=(R &  \ar@{->>}[l]_-d  S \ar@{^(->}[r]^-j & T) &
  \fp'=(R' &  \ar@{->>}[l]_-{d'}  S' \ar@{^(->}[r]^-{j'} & T)
}
$$
Since each $N'_k$ lies in some $N_i$, and $S' = T \setminus \coprod_{k\in K} N'_k$, $S = T \setminus \coprod_{i\in I} N_i$, we have $S\subset S'$, or in other words,
$j$ factors through $j'$. Define the quotient map $S\twoheadrightarrow Q$ to be that with fibers given by $F'_\ell 
\cap S$, for $\ell\in L$. Then the inclusion $S\hookrightarrow S'$ descends to an inclusion $i:Q\hookrightarrow R'$.
Finally, the quotient map $d:S\twoheadrightarrow R$ factors through $S\twoheadrightarrow Q$,
providing a quotient map $c:Q\twoheadrightarrow R$, since
each $F_\ell'$ lies in some $F_j$. Thus $\fp\geq \fp'$ due to $\fp = \fq\circ \fp'$ with
$$
\xymatrix{
\fq=(R &  \ar@{->>}[l]_-c  Q \ar@{^(->}[r]^-i & R')
}
$$
  \end{proof}

\begin{prop}
Two elements $\fp, \fp'\in \fP_T$ indexing cells $\sL_T({\fp}), \sL_T({\fp'}) \subset \sL_T$ satisfy 
$\fp\geq \fp'$ if and only if $\sL_T({\fp'})$ intersects the closure of $\sL_T(\fp)$. If this holds, then in fact 
$\sL_T({\fp'})$ lies in the closure of $\sL_T(\fp)$.
\end{prop}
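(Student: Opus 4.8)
The plan is to reduce the entire statement to coordinate bookkeeping inside a single Euclidean chart $\sL_T(\alpha)$. Write $\fp=(R\twoheadleftarrow S\hra T)$ and $\fp'=(R'\twoheadleftarrow S'\hra T)$. Three preliminaries are needed. First, each $\sL_T(\alpha)\subset\sL_T$ is closed: by Lemma~\ref{lem intersections} its intersection with every $\sL_T(\beta)$ is a closed quadrant, so its preimage under the defining quotient map is closed; hence for $\alpha\in V(S)$ the closure of $\sL_T(\fp)$ taken in $\sL_T$ coincides with its closure taken inside the chart $\sL_T(\alpha)$. Second, by Proposition~\ref{prop strat} the cell $\sL_T(\fp)$ is, in the coordinates $\{x_\gamma(\alpha)\}$, a product of open half-lines $\R_{>0}$, $\R_{<0}$, full lines $\R$ and the $|V(R)|-1$ hyperplanes $x_\gamma(\alpha)=0$ coming from case~(2); so $\overline{\sL_T(\fp)}$ is simply the polyhedron obtained by relaxing the strict inequalities of that proposition to non-strict ones while keeping the equalities. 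Third, if $\fp\geq\fp'$, say $\fp=\fq\circ\fp'$ with $\fq=(R\twoheadleftarrow Q\hra R')$, then the middle tree of $\fp$ is $Q\times_{R'}S'$, whose vertex set lies inside $V(S')$; thus $V(S)\subseteq V(S')$, and one may always work in a chart $\sL_T(\alpha)$ with $\alpha\in V(S)\subseteq V(S')$ adapted to both $\fp$ and $\fp'$.

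For the ``if'' direction, assume $\fp\geq\fp'$, which by Remark~\ref{rem partition order} says the $\fp'$-partition refines the $\fp$-partition. Fix $\alpha\in V(S)\subseteq V(S')$ and $x\in\sL_T(\fp')$. Proposition~\ref{prop strat} applied to $\fp'$ fixes the sign of each coordinate $x_\gamma(\alpha)$, and I must check the relaxed $\fp$-constraints. The only ones with content are the equalities $x_\gamma(\alpha)=0$ (case~(2) for $\fp$) and the conditions $x_\gamma(\alpha)\leq 0$ (relaxed case~(4) for $\fp$); in both, $\gamma$ is the vertex of a $\fp$-part ($F_j$ with $j\neq\ol\alpha$, resp.\ $N_i$) nearest to $\alpha$. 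Since the $\fp'$-part containing $\gamma$ is contained in that $\fp$-part and subtrees of a tree are convex, $\gamma$ is then also the vertex of that smaller $\fp'$-part nearest to $\alpha$, so the $\fp'$-constraints already give $x_\gamma(\alpha)=0$ (case~(2) for $\fp'$), resp.\ $x_\gamma(\alpha)<0$. Hence $x\in\overline{\sL_T(\fp)}$; this proves $\sL_T(\fp')\subseteq\overline{\sL_T(\fp)}$, and in particular, cells being nonempty, $\sL_T(\fp')\cap\overline{\sL_T(\fp)}\neq\emptyset$.

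For the converse, take $x\in\sL_T(\fp')\cap\overline{\sL_T(\fp)}$. From $\overline{\sL_T(\fp)}\subseteq\sL_T(\alpha)$ for all $\alpha\in V(S)$ we get $v_x(\alpha)=\yes$ for those $\alpha$, hence $V(S)\subseteq V(S')$; this already gives the first refinement condition, since each component $N'_k$ of $T\setminus S'$ is a connected subgraph of $T\setminus S$ and so lies in a single component $N_i$. For the second condition, fix $\alpha\in V(S)$ and record the dictionary furnished by Lemma~\ref{lem intersections}: for an edge $\{c,c'\}$ of $S'$ with $c$ nearer to $\alpha$, one has $x_{c'}(\alpha)=x_{c'}(c)$, so $e_x(\{c,c'\})=+$ exactly when $x_{c'}(\alpha)>0$. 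Suppose first that a $\fp'$-fiber $F'_\ell$ contains an edge $\{c,c'\}$ with $c\in V(S)$, $c'\notin V(S)$; convexity of $S$ forces $c$ to be nearer to $\alpha$, and the path from $\alpha$ to $c'$ leaves $S$ only at its last step, so $c'$ is the vertex of its component $N_i$ of $T\setminus S$ nearest to $\alpha$, whence relaxed case~(4) gives $x_{c'}(\alpha)\leq 0$ --- contradicting $e_x(\{c,c'\})=+$. So every $F'_\ell$ is either disjoint from $V(S)$, hence contained in one $N_i$, or contained in $V(S)$. In the latter case, if $F'_\ell$ contained an edge $\{c,c'\}$ joining two distinct $\fp$-fibers, then (again by convexity of the $F_j$ in $T$) the endpoint farther from $\alpha$ would be the vertex of a $\fp$-fiber $F_j$ with $j\neq\ol\alpha$ nearest to $\alpha$, so case~(2) would force its $\alpha$-coordinate to vanish --- once more contradicting $e_x(\{c,c'\})=+$. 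Hence $F'_\ell$ lies in a single $F_j$, which is the second refinement condition, so $\fp\geq\fp'$ by Remark~\ref{rem partition order}; combined with the ``if'' direction this also upgrades $\sL_T(\fp')\cap\overline{\sL_T(\fp)}\neq\emptyset$ to $\sL_T(\fp')\subseteq\overline{\sL_T(\fp)}$.

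The main obstacle is the converse direction: the closure relation only supplies inequalities on coordinates, so the real work is to locate enough \emph{forced} equalities --- precisely the case~(2) relations of Proposition~\ref{prop strat} --- and to translate ``an edge of a $\fp'$-fiber that crosses a $\fp$-fiber, or exits $S$'' into ``an endpoint sitting at a gate'', at which point such an equality (or the sign $\leq 0$) is available; the elementary convexity of subtrees of a tree is exactly what makes this translation run. The one genuinely topological input, that each chart $\sL_T(\alpha)$ is closed in $\sL_T$, is what allows $\overline{\sL_T(\fp)}$ to be identified with the naive relaxed polyhedron rather than something larger; the remainder is routine bookkeeping within that single chart.
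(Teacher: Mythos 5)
Your proof is correct and takes the same underlying approach as the paper (bookkeeping in a single chart $\sL_T(\alpha)$ via the explicit equations of Proposition~\ref{prop strat}), but it organizes the converse direction differently and fills in some topological preliminaries the paper treats as tacit. Concretely, you first establish $V(S)\subseteq V(S')$ directly from the closure hypothesis and the closedness of charts; this makes the first refinement condition (each $N'_k$ inside one $N_i$) automatic, whereas the paper argues it by contradiction via the vertex $\beta\in N'_k$ nearest to $\alpha$, and its claim that such a $\beta$ lands in some $F_j$ actually relies on a short convexity argument that the paper leaves unstated. Your second, edge-crossing argument for the fibers $F'_\ell$ (built on the dictionary $e_x(\{c,c'\})=+\iff x_{c'}(\alpha)>0$, together with the fact that every internal edge of a fiber of $q'$ must be a $+$-edge, since in a tree two adjacent vertices in the same component of $(V(S'),E^+)$ must be joined by an edge in $E^+$) is essentially the same idea as the paper's ``vertex of $F'_\ell$ that is not nearest in $F'_\ell$ but nearest in its $\fp$-part'', merely phrased in edge terms; both locate a vertex whose $\fp$-closure constraint and $\fp'$-constraint conflict. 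You also make explicit that $\sL_T(\alpha)$ is closed in $\sL_T$ (via Lemma~\ref{lem intersections} and the quotient topology) and that $\overline{\sL_T(\fp)}$ is the relaxed polyhedron; these are genuine prerequisites that the paper uses without comment, so recording them is a worthwhile addition. One small imprecision: in the ``if'' direction you state the $\fp'$-constraint corresponding to case~(4) of $\fp$ as ``$x_\gamma(\alpha)<0$'', but when $\gamma$ happens to lie in $V(S')$ (i.e.\ in some $F'_\ell\subset N_i$) the $\fp'$-constraint is case~(2), giving $x_\gamma(\alpha)=0$ --- since the target relaxed constraint is $\leq 0$ this does not affect the conclusion, but the case split should be recorded, as the paper does.
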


\begin{proof}
We will proceed in the language of partitions as in Lemmas~\ref{lem:corrpart} and \ref{lem:partition order} though
one could equally well translate the arguments back into the language of correspondences.

Consider two element $\fp,  \fp'\in \fP_T$ representing respective partitions 
$$
\xymatrix{
(\{N_i\}_{i\in I}, \{F_{j}\}_{j\in J})
&
(\{N'_k\}_{k\in K}, \{F'_{\ell}\}_{\ell\in L})
}$$
comprising subtrees of complementary components and fibers.

Suppose $\fp\geq \fp'$ so that  the partition associated to $\fp'$   refines that associated to $\fp$. We will show that
$\sL_T({\fp'})$ is in the closure of $\sL_T(\fp)$. 

 Choose any fiber $F_j$.  Then $F_j$ contains some fiber $F'_\ell$.
Fix any vertex $\alpha \in V(F'_\ell)$. Then we have $\alpha\in V(F_j)$ as well. Thus we have
$$
\sL_T(\fp), \sL_T(\fp') \subset \sL_T(\alpha)
$$
Returning to the proof of Proposition~\ref{prop strat}, we find explicit equations for these subspaces to contain
 a point $x\in \sL_T(\alpha)$ in terms of its coordinates 
 $$
\xymatrix{
\{x_\gamma(\alpha)\}, %\in \R^{V_\alpha(T)}, 
\text{ with } \gamma\in V(T) \setminus \{\alpha\}.
}
$$
We have the following possibilities  depending on the location of $\gamma$. 
The fact that the partition associated to $\fp'$   refines that associated to $\fp$
implies simple constraints on the location of $\gamma$ with respect to the partitions and $\alpha$.

\begin{enumerate}

\item Suppose $\gamma$ lies in the fiber $F_{\ol\alpha}\subset S$ containing $\alpha$ so that 
$$x\in \sL_T(\fp) \implies x_\gamma(\alpha)>0
$$
Then $\gamma$ must lie in some fiber $F'_{\ol\gamma}\subset F_{\ol\alpha}$ 
so that 
$$
x\in \sL_T(\fp') \implies x_\gamma(\alpha)\geq 0
$$

 \item Suppose $\gamma$ lies in $S$ but not  in the fiber $F_{\ol\alpha}\subset S$ containing $\alpha$. Suppose also that $\gamma$
 is the nearest vertex to $\alpha$ within the fiber $F_{\ol\gamma}\subset S$ containing $\gamma$ so that
  $$x\in \sL_T(\fp) \implies x_\gamma(\alpha)=0
  $$
Then $\gamma$ must lie in some fiber $F'_{\ol\gamma}\subset F_{\ol\gamma}$ 
and 
$\gamma$
 must be the nearest vertex to $\alpha$ within the fiber $F'_{\ol\gamma}\subset  F_{\ol\gamma}$
 so that
 $$x\in \sL_T(\fp') \implies x_\gamma(\alpha)= 0
 $$

\item Suppose $\gamma$  lies in $S$ but not  in the fiber $F_{\ol\alpha}\subset S$ containing $\alpha$. Suppose also that $\gamma$
 is not the nearest vertex to $\alpha$ in the fiber $F_{\ol\gamma} \subset S$ containing $\gamma$ so that 
 $$
 x\in \sL_T(\fp) \implies x_\gamma(\alpha)>0
 $$
 Then $\gamma$ must lie in some fiber $F'_{\ol\gamma}\subset F_{\ol\gamma}$ 
so that 
$$
x\in \sL_T(\fp') \implies x_\gamma(\alpha)\geq 0
$$

\item Suppose $\gamma$ lies in $T\setminus S$ so that $\gamma$ is in some  subtree $N_i\subset T\setminus S$. Suppose also that $\gamma$  is the nearest vertex to $\alpha$ within $N_i$ so that 
$$
x\in \sL_T(\fp) \implies x_\gamma(\alpha)<0
$$
Then either (a) $\gamma$ must  lie in some fiber $F'_{\ol\gamma}\subset N_i$ 
and 
$\gamma$
 must be the nearest vertex to $\alpha$ within the fiber $F'_{\ol\gamma}\subset  N_i$
 so that
 $$x\in \sL_T(\fp') \implies x_\gamma(\alpha)= 0
 $$
or (b) $\gamma$ must  lie in some some subtree $N'_k  \subset N_i$ 
and 
$\gamma$
 must be the nearest vertex to $\alpha$ within the subtree $N'_k  \subset N_i$
 so that
 $$x\in \sL_T(\fp') \implies x_\gamma(\alpha)< 0
 $$

\item Suppose $\gamma$ lies in   $T\setminus S$ so that $\gamma$ is in some  subtree $N_i\subset T\setminus S$. Suppose also that $\gamma$  is not the nearest vertex to $\alpha$ within $N_i$ so that  
$$x\in \sL_T(\fp) \implies x_\gamma(\alpha) \text{ arbitrary}
 $$
Thus we need not worry about the possible constraints imposed  by $ x\in \sL_T(\fp')$.

\end{enumerate}

From the above equations, we conclude that
$\sL_T({\fp'})$ is in the closure of $\sL_T(\fp)$.

Conversely, suppose $\sL_T({\fp'})$ intersects the closure of $\sL_T(\fp)$ at some point $x\in \sL_T$.  We will show that
 the partition of $\fp'$   refines that of $\fp$.
  To begin, choose any $\alpha\in V(T)$ such that
   $\sL_T(\fp) \subset \sL_T(\alpha)$. Hence  the closure of $\sL_T(\fp)$ also lies in $\sL_T(\alpha)$. Thus  
$\sL_T(\fp')$ intersects $\sL_T(\alpha)$, and hence $\sL_T(\fp')\subset \sL_T(\alpha)$.
Thus we have $x\in \sL_T(\alpha)$ with coordinates 
 $$
\xymatrix{
\{x_\gamma(\alpha)\}, %\in \R^{V_\alpha(T)}, 
\text{ with } \gamma\in V(T) \setminus \{\alpha\}.
}
$$
 
Now we will proceed by contradiction. Suppose some subtree $N'_k \subset T\setminus S'$ is not contained in any subtree $N_i \subset T\setminus S$. Then the vertex $\beta$ within $N'_k$ closest to $\alpha$  must lie in some fiber $F_j$. But turning to the equations for $x_\beta(\alpha)$ of Proposition~\ref{prop strat}, 
and
 applying case (1), (2), or (3) to $\sL_T(\fp)$ and case (4) to $\sL_T(\fp')$, we find
$$
\xymatrix{
x\in \ol{\sL_T(\fp)} \implies x_\beta(\alpha) \geq 0
&
x\in \sL_T(\fp') \implies x_\beta(\alpha)< 0
}$$
and hence a contradiction.
 
 Next suppose some fiber $F'_\ell$ is not contained in any subtree $N_i$ or fiber $F_j$. 
 Then there is a vertex $\beta$ within $F'_\ell$ that  is not  the nearest to $\alpha$ 
 within $F'_\ell$ but is the nearest to $\alpha$ within whichever subtree $N_i$ or fiber $F_j$ contains $\beta$.
% or (ii) $\beta$ is  not the nearest to $\alpha$ 
% within $V(N'_k)$ but is  the nearest to $\alpha$ within whichever $V(M_i)$ or $V(N_j)$ contains $\beta$.
 Again turning to the equations for $x_\beta(\alpha)$ of Proposition~\ref{prop strat}, 
and
 applying case (2) or  (4) to $\sL_T(\fp)$ and case (1) or  (3) to $\sL_T(\fp')$, we find
 $$
\xymatrix{
x_\beta(\alpha)\in \ol{\sL_T(\fp)} \implies  x_\beta(\alpha)\leq  0
&
x_\beta(\alpha)\in \sL_T(\fp') \implies  x_\beta(\alpha)> 0
}$$
and hence a contradiction.
 
 Finally, for the last assertion, we have shown that
 if $\fp\geq \fp'$ then $\sL_T({\fp'})$ lies in the closure of $\sL_T(\fp)$, and also that 
if  $\sL_T({\fp'})$ intersects  the closure of $\sL_T(\fp)$ then  $\fp\geq \fp'$. Thus we conclude that if
 $\sL_T({\fp'})$ intersects  the closure of $\sL_T(\fp)$ then $\sL_T({\fp'})$ lies in the closure of $\sL_T(\fp)$.
\end{proof}

Recall from Remark~\ref{rem arb strs} that the arboreal singularity $\sL_T$ inherits a natural metric from its Euclidean constituents,
and the arboreal link $\sL_T^\link \subset \sL_T$ refers to the compact subspace of points unit distance from the center
 $0\in \sL_T$.
Positive dilation provides  a canonical homeomorphism
$$
\xymatrix{
 \sL_T^\link \times \R_{>0} \ar[r]^-\sim & \sL_T\setminus\{0\}
 }
$$
realizing the arboreal singularity as the cone over the link
$$
\sL_T \simeq \Cone(\sL_T^\link)
$$

Note that all of the subspaces $\sL_T(\fp)\subset  \sL_T$ are invariant under positive dilation.

\begin{defn}
%Let $T$ be a tree with associated arboreal link $\sL^\link_T$.

Fix an element $\fp\in\fP_T\setminus\{\fp_0\}$.

Introduce
 the open
 cell
$$
\xymatrix{
\sL_T^\link(\fp) =  \sL^\link_T \cap\sL_T(\fp) 
}
$$ 
of dimension $\rho(\fp)-1$.
\end{defn}

Now the two preceding propositions coupled with general theory~(\cite{bjorner, mccrory}) immediately imply the following.
Recall that a {\em finite regular cell complex} is a Hausdorff space $X$
with a finite collection of closed cells $c_i \subset X$ whose interiors $c_i^\circ \subset c_i$ provide a partition of $X$ and boundaries $\partial c_i \subset X$
are unions of cells. A finite regular cell complex $X$ has the {\em intersection property} if the intersection of any two cells $c_i, c_j\subset X$ is 
either another cell or empty. (This holds for example if the closed cells are convex polyhedra in Euclidean spaces glued along convex subspaces as found in Proposition~\ref{prop strat}.)
%The {\em face poset} of a finite regular cell complex $X$ is the poset with elements the cells of $X$ with
%relation $c_i\leq c_j$ whenever $c_i \subset  c_j$. 
The {\em order complex} of a poset is the natural simplicial complex with simplices the finite totally-ordered chains of the poset.
(Other useful references include~ \cite{bb, wachs}.)

\begin{thm}\label{thm reg cc}
Let $T$ be a tree with associated arboreal link $\sL^\link_T$.

The decomposition of the arboreal link $\sL^\link_T$ into the cells $\sL^\link_T(\fp)$, for
$\fp\in \fP_T \setminus \{\fp_0\}$,  is a regular cell complex
with the intersection property.  It is homeomorphic to the order  complex of  $\fP_T \setminus \{\fp_0\}$.
\end{thm}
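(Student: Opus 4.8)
The plan is to assemble Theorem~\ref{thm reg cc} from the two propositions just proved together with standard facts about regular cell complexes, as the text itself hints. First I would fix the combinatorial setup: the cells are the subspaces $\sL^\link_T(\fp) = \sL^\link_T \cap \sL_T(\fp)$ for $\fp \in \fP_T \setminus \{\fp_0\}$, and Proposition~\ref{prop strat} together with the identification $\sL_T \simeq \Cone(\sL_T^\link)$ shows each $\sL_T^\link(\fp)$ is an open cell of dimension $\rho(\fp) - 1$ (the cone direction accounts for the drop by one; note $\fp_0$ is excluded precisely because its cell in $\sL_T$ is the central point, contributing nothing to the link). So I would first check that these open cells partition $\sL_T^\link$: every point $x \in \sL_T^\link$ has $x \neq 0$, hence defines a correspondence $\fp_x \neq \fp_0$ by the invariants constructed before Proposition~\ref{prop strat}, and $x \in \sL_T^\link(\fp_x)$; uniqueness is immediate since the invariants are well-defined functions of $x$.

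Next I would establish that this is a \emph{regular} cell complex by controlling closures. Proposition~\ref{prop strat} says the closure of $\sL_T(\fp)$ inside $\sL_T(\alpha)$ (for any $\alpha \in V(S)$) is a convex polyhedron cut out by the explicit equalities and inequalities, so $\ol{\sL_T^\link(\fp)}$ is the corresponding spherical polytope, in particular a closed ball of the right dimension, and the characteristic map from the closed ball is a homeomorphism onto $\ol{\sL_T^\link(\fp)}$ taking the boundary sphere onto $\ol{\sL_T^\link(\fp)} \setminus \sL_T^\link(\fp)$ — this is the regularity condition. The previous Proposition (on closure relations) then shows $\ol{\sL_T^\link(\fp)} = \bigsqcup_{\fp \geq \fp'} \sL_T^\link(\fp')$, so boundaries of cells are unions of cells, and the face poset of the cell complex is exactly $\fP_T \setminus \{\fp_0\}$ with its order relation. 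For the intersection property, I would argue that $\ol{\sL_T^\link(\fp)} \cap \ol{\sL_T^\link(\fp')}$ is the union of those $\sL_T^\link(\fp'')$ with $\fp \geq \fp''$ and $\fp' \geq \fp''$; using the partition description of $\fP_T$ from Remarks~\ref{rem partition} and \ref{rem partition order}, the common refinements of the partitions of $\fp$ and $\fp'$ have a coarsest element (the "join of the two refinements" — take intersections of the respective subtree pieces), which gives a \emph{maximum} $\fp'' \in \fP_T$ below both, so the intersection is a single closed cell (or empty if no such refinement exists). Hence the intersection property holds.

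Finally, with regularity and the identification of the face poset in hand, the last two sentences of the theorem are citations: for a regular cell complex $X$, the barycentric subdivision realizes $X$ as homeomorphic to the order complex (geometric realization of the nerve) of its face poset — this is the standard fact in~\cite{bjorner} (and also in~\cite{bb, wachs}, and for the semialgebraic/stratified subtleties~\cite{mccrory}). So $\sL_T^\link \cong |\,\fP_T \setminus \{\fp_0\}\,|$ as claimed.

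I expect the main obstacle to be the \emph{intersection property}, i.e.\ verifying that the poset $\fP_T$ (equivalently, refinements of the two-colored subtree partitions) is such that any two elements with a common lower bound have a \emph{greatest} common lower bound. Checking regularity of each cell reduces cleanly to Proposition~\ref{prop strat}'s explicit polyhedral description and is essentially bookkeeping; identifying the face poset is a restatement of the closure-relations proposition. But proving the meet exists requires genuinely arguing that intersecting a complementary-component-subtree partition with a fiber-subtree partition again yields a legitimate element of $\fP_T$ — one must check the connectivity condition "for any $H \subset I$, $T \setminus (\coprod_{i \in H} N_i)$ is connected" is preserved, which is where acyclicity of $T$ does the real work. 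I would isolate this as a short lemma on $\fP_T$ before invoking the general cell-complex machinery.
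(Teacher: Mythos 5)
Your overall architecture is the paper's own: the paper deduces the theorem in one line from Proposition~\ref{prop strat}, the closure-relations proposition, and general cell-complex theory cited from~\cite{bjorner, mccrory}, and your treatment of the cell partition of the link, of regularity via the polyhedral description, of the face-poset identification, and of the order-complex statement is exactly that assembly. The genuine gap is in the one step where you add substance beyond the citation, namely the intersection property. Your proposed lemma --- that intersecting the pieces of the two partitions again yields an element of $\fP_T$, i.e.\ that the connectivity condition is preserved by common refinement --- is false. Take the $A_3$-tree $T_3$ with vertices $v_1, v_2, v_3$; let $\fp$ have fiber $\{v_1\}$ and complementary subtree $N = \{v_2, v_3\}$ (so $S = R = \{v_1\}$), and let $\fp'$ have fiber $\{v_3\}$ and complementary subtree $N' = \{v_1, v_2\}$. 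Both are elements of $\fP_{T_3}$, and the piecewise common refinement has pieces $\{v_1\}$, $\{v_2\}$, $\{v_3\}$. The only way to promote this set partition to a common lower bound other than $\fp_0$ is to declare $\{v_2\} = N \cap N'$ a complementary piece, and then the requirement that $T_3 \setminus \{v_2\}$ be connected fails; so your candidate meet is not in $\fP_T$. The theorem itself is not threatened: here the only common lower bound is $\fp_0$, so the two closed link cells are in fact disjoint (consistent with $\sL^\link_{T_3} \simeq sk_1\Delta^3$, where they are opposite edges), but that conclusion is precisely what your construction does not deliver. Note also that a common refinement always exists (namely $\fp_0$), so the dichotomy is not ``meet versus no refinement'' but ``meet equal to $\fp_0$ (empty intersection in the link) versus meet above $\fp_0$''.

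So your reduction of the intersection property to the existence of greatest common lower bounds in $\fP_T$ is the right bookkeeping, but the meet is not in general the piecewise intersection of the two partitions, and the assertion that acyclicity of $T$ preserves the connectivity condition is exactly the step that would fail. A correct argument must either repair the combinatorics --- showing that whenever the piecewise refinement violates the defining conditions of $\fP_T$ the common lower bounds collapse accordingly, and identifying the coarsest valid common refinement that remains --- or work geometrically from the explicit cone descriptions of Proposition~\ref{prop strat} (and even there one must be careful, since two cells need not lie in a single Euclidean constituent $\sL_T(\alpha)$ unless their subtrees share a vertex). This is presumably why the paper leans on the general references rather than such a construction. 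Everything else in your proposal is correct and coincides with the paper's intended, essentially unelaborated, argument.
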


Before continuing on, let us also  record the local structure of arboreal singularities.

\begin{defn}
Fix an element  $\fp\in \fP_T$.

(1) Introduce the poset
$$
\xymatrix{
\fP_T(\geq\fp) = \{\fq\in  \fP_T \,|\, \fq\geq \fp\}
}$$
equipped with the induced partial order.

(2) Introduce the open neighborhood 
$$
\xymatrix{
\sL_T(\geq\fp) = \coprod_{\fq \in \fP_T(\geq\fp)} \sL_T(\fq)  \subset \sL_T
}$$
of the cell $\sL_T(\fp)
\subset \sL_T$.
\end{defn}

\begin{lemma}
Given an element  $\fp\in \fP_T$ representing a correspondence
$$
\xymatrix{
\fp=(R &  \ar@{->>}[l]_-d  S \ar@{^(->}[r]^-j & T)
}
$$
the natural poset map is an isomorphism
$$
\xymatrix{
\fP_R\ar[r]^-\sim &\fP_T(\geq \fp)
&
\fq \ar@{|->}[r] & \fq\circ \fp
}$$
\end{lemma}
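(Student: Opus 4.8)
The plan is to show that $\Phi\colon\fP_R\to\fP_T(\geq\fp)$, $\fq\mapsto\fq\circ\fp$, is a well-defined bijection whose inverse is order-preserving, working throughout with composition of correspondences as an associative operation. Write $\fp=(R\leftarrow S\hookrightarrow T)$ with left leg the quotient $q\colon S\to R$ and right leg the inclusion $S\hookrightarrow T$. First I would check that for $\fq=(P\leftarrow Q\hookrightarrow R)\in\fP_R$ the composite $\fq\circ\fp=(P\leftarrow Q\times_R S\hookrightarrow T)$ is again a correspondence of trees: since $Q\hookrightarrow R$ is an inclusion, $Q\times_R S$ is the full subgraph of $S$ on $q^{-1}(V(Q))$, and because $Q$ is a connected subtree and $q$ is a quotient of trees, this preimage is a union of fibers of $q$ glued along edges over edges of $Q$, hence again a subtree, now of $T$; and the composite map $Q\times_R S\to Q\to P$ is again a quotient of trees. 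In the partition language of Remarks~\ref{rem partition}--\ref{rem partition order}, $\fq\circ\fp$ refines the partition of $\fp$ by subdividing only inside the fibers $F_j$, and the connectivity conditions listed there are inherited. That $\fq\circ\fp\geq\fp$ is immediate, since $\fq$ itself is the witnessing third correspondence in the definition of $\geq$ (using $\fq\circ\fp=\fq\circ\fp$); so $\Phi$ does land in $\fP_T(\geq\fp)$.

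Surjectivity is essentially the definition of the order: if $\fr\in\fP_T(\geq\fp)$, then by definition there is a correspondence $\fq=(P\leftarrow Q\hookrightarrow R)$ with $\fr=\fq\circ\fp$, and such a $\fq$ is exactly an element of $\fP_R$. For injectivity I would argue that $\fp$ is right-cancellable. In $\fq\circ\fp=(P\leftarrow Y\hookrightarrow T)$ the subtree $Y=q^{-1}(V(Q))$ lies inside $S$, so from the data $\fp$ and $\fq\circ\fp$ one recovers $Q$ as the full subgraph of $R$ on $q(V(Y))$, and the quotient $Y\to P$ factors as $Y\xrightarrow{q|_Y}Q\xrightarrow{c}P$, so the fibres of $c$ — hence $c$ and $P$ up to the unique compatible isomorphism — are determined by those of $Y\to P$. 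Thus $\fq\circ\fp=\fq'\circ\fp$ forces $\fq=\fq'$.

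It remains to check order-compatibility in both directions. If $\fq\geq\fq'$ in $\fP_R$, write $\fq=\fs\circ\fq'$; associativity of correspondence composition (fibre products are associative up to the canonical identification) gives $\fq\circ\fp=\fs\circ(\fq'\circ\fp)$, so $\Phi(\fq)\geq\Phi(\fq')$. Conversely, if $\fq\circ\fp\geq\fq'\circ\fp$, write $\fq\circ\fp=\ft\circ(\fq'\circ\fp)=(\ft\circ\fq')\circ\fp$ for a correspondence $\ft$ of the required form; since $\ft\circ\fq'$ and $\fq$ both lie in $\fP_R$, the injectivity just established gives $\fq=\ft\circ\fq'$, which exhibits $\fq\geq\fq'$ in $\fP_R$. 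Hence $\Phi$ is an isomorphism of posets.

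The formal manipulations above are short; the step I expect to be the main obstacle is making ``composition of correspondences'' fully rigorous before any of this can be invoked — namely verifying that the fibre product of a subtree-inclusion with a tree-quotient is again a connected subtree, that its two legs are again an inclusion and a quotient of trees, that composition is associative up to the canonical identification, and that every correspondence produced this way genuinely satisfies the closure/connectivity constraints characterising elements of $\fP_T$ in Remark~\ref{rem partition}. Once this bookkeeping is secured, the partition description makes the cancellation statements transparent and the lemma follows immediately.
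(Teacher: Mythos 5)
Your proof is correct and takes essentially the same route as the paper: surjectivity follows from the definition of the order, and injectivity (right-cancellability of $\fp$) is deduced, as in the paper, from the fact that the surjection $q\colon S\twoheadrightarrow R$ lets one recover $Q\subset R$ from its preimage $Q\times_R S\subset S$. You are more explicit than the paper about well-definedness of the composition and about checking order-compatibility in both directions, but the core argument is the same.
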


\begin{proof}
The map is surjective by the definition of the partial order and we must show it is injective.
Suppose we have $\fq,\fq'\in \fP_R$ representing correspondences
$$
\xymatrix{
\fq=(P &  \ar@{->>}[l]_-d  Q \ar@{^(->}[r]^-j & R)
&
\fq'=(P' &  \ar@{->>}[l]_-d  Q' \ar@{^(->}[r]^-j & R)
}
$$
such that $\fq\circ\fp \simeq \fq'\circ\fp$. So we may assume that $P=P'$ and need to show that $Q, Q'\subset R$ are the same subset.
But $\fq\circ\fp \simeq \fq'\circ\fp$ implies that $Q\times_R S = Q'\times_R S \subset S$, and hence the surjection $S\twoheadrightarrow R$ implies that $Q=Q'\subset S$.
\end{proof}
%
%
%\begin{remark}
%Continuing with Remarks~\ref{rem partition}  and \ref{rem partition order},
%the above lemma is evident when we regard the partial order on the set $\fP_T$ in the language of partitions.
%We have discussed that
%$\fp\geq \fp'$ if and only if the  partition associated to $\fp'$  refines that associated to $\fp$.
%
%\end{remark}

\begin{corollary}\label{cor local}
Let $T$ be a tree with associated arboreal singularity $\sL_T$.

Fix an element  $\fp\in \fP_T$ indexing the cell $\sL_T(\fp) \subset \sL_T$ and  open neighborhood $ \sL_T(\geq \fp)\subset \sL_T$.

The poset isomorphism 
$$
\xymatrix{
\fP_R\ar[r]^-\sim &\fP_T(\geq \fp)
}
$$ 
induces a homeomorphism
$$
\xymatrix{
\sL_T(\fp) \times \sL_R \ar[r]^-\sim & \sL_T(\geq \fp)
}
$$ 
\end{corollary}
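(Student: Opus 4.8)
The plan is to upgrade the poset isomorphism $\fP_R \xrightarrow{\sim} \fP_T(\geq \fp)$ to a homeomorphism $\sL_T(\fp) \times \sL_R \xrightarrow{\sim} \sL_T(\geq \fp)$ by tracking the explicit coordinate descriptions of cells from Proposition~\ref{prop strat}. First I would fix a vertex $\alpha \in V(S)$ (say the nearest vertex of $S$ to some chosen base point, or simply any vertex), so that $\sL_T(\fp) \subset \sL_T(\alpha)$ and, by the closure proposition, $\sL_T(\fq') \subset \sL_T(\alpha)$ for every $\fq' \in \fP_T(\geq \fp)$; hence $\sL_T(\geq \fp) \subset \sL_T(\alpha) = \R^{V(T)\setminus\{\alpha\}}$. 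Dually, picking the vertex $\ol\alpha \in V(R)$ that is the image of $\alpha$ under $d: S \twoheadrightarrow R$, every cell of $\fP_R$ meeting $\sL_R(\fp_0)$ lies in $\sL_R(\ol\alpha) = \R^{V(R)\setminus\{\ol\alpha\}}$. The key observation is that the coordinate set $V(R)\setminus\{\ol\alpha\}$ is canonically a subset of $V(T)\setminus\{\alpha\}$: namely the vertices $\gamma$ which, in the language of Proposition~\ref{prop strat} applied to $\fp$, are the nearest vertices to $\alpha$ within fibers $F_{\ol\gamma} \neq F_{\ol\alpha}$ of $S$ — these are exactly the coordinates pinned to $0$ by case~(2) of that proposition, and there are $|V(R)|-1$ of them, matching $\dim \sL_R(\ol\alpha)$.

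The next step is to define the map explicitly in these coordinates. A point of $\sL_T(\fp)$, written in $\sL_T(\alpha)$-coordinates, has its "case (2)" coordinates equal to $0$ and its remaining coordinates constrained by cases (1), (3), (4), (5); a point of $\sL_R$ near $0$, written in $\sL_R(\ol\alpha)$-coordinates, is an arbitrary small tuple indexed by $V(R)\setminus\{\ol\alpha\}$. I would send the pair to the point of $\sL_T(\alpha)$ obtained by overwriting precisely the "case (2)" coordinates of the $\sL_T(\fp)$-point with the $\sL_R$-coordinates. The content to check is twofold: (i) for an $\sL_R$-tuple small enough this lands in $\sL_T(\geq \fp)$, and every point of $\sL_T(\geq \fp)$ arises uniquely this way; (ii) the assignment is compatible with the cell decompositions, i.e.\ it carries $\sL_T(\fp) \times \sL_R(\fq)$ onto $\sL_T(\fq \circ \fp)$ for each $\fq \in \fP_R$. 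For (ii) one re-runs the case analysis of Proposition~\ref{prop strat}: the partition associated to $\fq\circ\fp$ refines that of $\fp$ by \emph{only} subdividing the fibers $F_j$ of $S$ (the complementary subtrees $N_i$ of $T\setminus S$ are untouched, since $\fq$ lives over $R$), so the sign/zero conditions on the "case (1),(3),(4),(5)" coordinates are inherited verbatim from $\fp$, while the conditions on the "case (2)" coordinates are exactly those defining the corresponding cell $\sL_R(\fq) \subset \sL_R(\ol\alpha)$. This is most cleanly phrased using Remark~\ref{rem partition order}.

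Continuity and openness of the map (and its inverse) then follow because on each stratum it is given by the identity on one block of coordinates and a fixed affine inclusion on the other, and the finitely many strata fit together along their common boundary descriptions; alternatively, one invokes the fact that a cellular bijection between regular cell complexes inducing a poset isomorphism on face posets — here $\fP_T(\geq\fp) \cong \{\fp_0\} \times \fP_R$ via $\fq \mapsto (\fp, \fq)$, consistent with Theorem~\ref{thm reg cc} — is automatically a homeomorphism, after noting $\sL_T(\fp)$ is itself (the open cell hence, locally, an affine space) contributing a product factor. The main obstacle I anticipate is \textbf{verifying that the product of cell structures matches}, i.e.\ step (ii): one must confirm that refining $\fp$ \emph{only within its fibers} is precisely the image of $\fP_R$ under $\fq \mapsto \fq\circ\fp$ (this is the preceding lemma) and that the corresponding coordinate constraints factor as a product — there is a genuine risk of an off-by-one or a mislabeled "nearest vertex" in matching $V(R)\setminus\{\ol\alpha\}$ with the case~(2) coordinates, so I would be careful to pin down the identification of coordinate indices before doing anything else. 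Everything else is bookkeeping on convex polyhedra.
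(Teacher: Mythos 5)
Your main line of argument has a genuine gap at the very first step. You claim that, having chosen $\alpha \in V(S)$ so that $\sL_T(\fp) \subset \sL_T(\alpha)$, ``by the closure proposition, $\sL_T(\fq') \subset \sL_T(\alpha)$ for every $\fq' \in \fP_T(\geq\fp)$; hence $\sL_T(\geq\fp) \subset \sL_T(\alpha)$.'' This is false, and the implication cited runs the wrong way. The closure proposition says that for $\fq' \geq \fp$ the cell $\sL_T(\fp)$ lies in the \emph{closure} of $\sL_T(\fq')$. Since each chart $\sL_T(\alpha)$ is a \emph{closed} subspace of $\sL_T$, what one can conclude from $\sL_T(\fq') \subset \sL_T(\alpha)$ is that $\sL_T(\fp) \subset \sL_T(\alpha)$ --- that is the correct direction, and it is the one actually used in the paper's proof of the closure proposition. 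The converse you need does not hold: the open star $\sL_T(\geq\fp)$ generally spreads across several charts.

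Concretely, take $T$ the $A_3$-tree with vertices $v_1, v_2, v_3$ and edges $\{v_1,v_2\}$, $\{v_2,v_3\}$, and let $\fp$ be the correspondence with $S = T$ and $q: S \twoheadrightarrow R$ contracting the edge $\{v_1,v_2\}$, so the partition is $F_1 = \{v_1,v_2\}$, $F_2 = \{v_3\}$. Then $R$ is an $A_2$-tree, and $\fP_T(\geq\fp)$ contains the element $\fq' = (\pt \twoheadleftarrow \{v_3\} \hookrightarrow T)$, whose cell $\sL_T(\fq') = \{x_{v_2}(v_3) < 0\}$ is disjoint from $\sL_T(v_1)$ and $\sL_T(v_2)$; symmetrically, $\fq'' = (\pt \twoheadleftarrow \{v_1,v_2\} \hookrightarrow T)$ gives a cell disjoint from $\sL_T(v_3)$. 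So there is no single $\alpha \in V(S)$ with $\sL_T(\geq\fp) \subset \sL_T(\alpha)$, and the ``overwrite case~(2) coordinates in one chart'' map cannot be defined on all of $\sL_T(\fp) \times \sL_R$. (The dual claim about $\sL_R$ landing in a single $\sL_R(\ol\alpha)$ fails for the same reason.) A salvage along these lines would require defining the map chart by chart over $\alpha \in V(S)$, with its coordinate identification depending on $\alpha$, and then verifying compatibility across overlaps --- considerably more than the bookkeeping you describe.

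Your fallback observation in the last paragraph is the argument the paper actually has in mind, and it is sound: $\sL_T$ carries a polyhedral cell structure (Proposition~\ref{prop strat} and Theorem~\ref{thm reg cc}), the open star of a cell in such a complex is PL homeomorphic to the open cell times the cone on its link, and the face poset of that link is $\fP_T(>\fp) \cong \fP_R \setminus \{\fp_0\}$ by the preceding lemma, hence the link is homeomorphic to $\sL_R^\link$ by Theorem~\ref{thm reg cc}. The corollary follows. I would drop the explicit-coordinate approach (or confine it to the local analysis within each chart where it is genuinely valid) and lead with the regular-cell-complex argument, taking care to phrase it in terms of the \emph{links} --- neither $\sL_T(\geq\fp)$ nor $\sL_R$ is a finite cell complex, so the ``cellular bijection inducing a poset isomorphism is a homeomorphism'' fact applies to $\link(\sL_T(\fp))$ and $\sL_R^\link$, after which one takes products with $\sL_T(\fp)$ and cones.
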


%%%%%%%%%%%%%%%%%%%%%%%%%%%%%%%%%%%%%%%%%%%%%%%%%%%%%%%%%%%%%

\subsection{Example: $A_n$-trees}\label{sect an ex}

By the {\em $A_n$-tree $T_n$}, we will mean the tree with $n$ vertices labelled $v_1, \ldots, v_n$ and an edge
connecting the vertices $v_i$ and $v_{i+1}$, for all $i = 1, \ldots, n-1$.

Let $\Delta^n$ denote the $n$-simplex. Let $[n]= \{0,1,  \ldots, n\}$ denote its vertices so that the subsimplices of $\Delta^n$ are in natural bijection with nonempty subsets of $[n]$.

 Let $sk_{n-2} \Delta^n$ denote the  $(n-2)$-skeleton of $\Delta^n$. The subsimplices of $sk_{n-2} \Delta^n$ 
 are in natural bijection with nonempty subsets of $[n]$ containing at most $n-1$ elements.

\begin{prop}
There is an identification of regular cell complexes
$$
\sL_{T_n} \simeq \Cone(sk_{n-2} \Delta^n)
$$
\end{prop}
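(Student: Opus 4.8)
The plan is to identify both sides as regular cell complexes and match their face posets, then invoke the fact (already available via Theorem~\ref{thm reg cc} and the classical correspondence between regular cell complexes and their face posets) that a regular cell complex is determined up to homeomorphism by its face poset. Since $\sL_{T_n} = \Cone(\sL_{T_n}^\link)$ and coning corresponds to adjoining a new minimum $\fp_0$ to the face poset, it suffices to produce a poset isomorphism between $\fP_{T_n}\setminus\{\fp_0\}$ and the face poset of $sk_{n-2}\Delta^n$. The face poset of $sk_{n-2}\Delta^n$ is the poset of nonempty proper subsets $\emptyset \subsetneq A \subsetneq [n]$ ordered by inclusion; equivalently, after coning, the poset of all nonempty subsets $A \subseteq [n]$ together with $\emptyset$ — i.e.\ the full Boolean lattice $2^{[n]}$ on $n+1$ elements. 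So the real content is: $\fP_{T_n} \cong 2^{[n]}$, the Boolean lattice on $n+1$ elements.

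First I would set up the bijection on elements. By Remark~\ref{rem partition}, an element of $\fP_{T_n}$ is a partition of the path $T_n$ (vertices $v_1,\dots,v_n$) into subtrees of two types $(\{N_i\},\{F_j\})$, with the $J$-index set nonempty and each $N_i$ a connected component of the complement of $S = \bigcup F_j$. Since $T_n$ is a path, any subtree is a subinterval $[v_a,\dots,v_b]$, and a partition into subintervals is the same as a choice of the $n-1$ "cut points" (gaps between consecutive vertices) that are ``active''. There are $n+1$ relevant binary choices in total: the $n-1$ internal gaps are each either a genuine edge of $R$ (the two sides separated) or contracted/crossed; and we must record, for each maximal run, whether it is of type $N$ (complementary) or type $F$ (fiber). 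A clean way to package this: a point-theoretic invariant is, for each vertex, the pair $(v_x(\alpha), \text{role})$, but the efficient encoding is to note that the data is equivalent to a subset of a fixed $(n+1)$-element set. Concretely I would show $\fP_{T_n}$ is isomorphic to $2^{\{0,1,\dots,n\}}$ by sending a correspondence to the subset $A$ recording which of the $n+1$ ``markers'' (say: $n-1$ internal edges that survive in $R$, plus the two choices of whether the leftmost and rightmost regions are type-$F$ vs type-$N$ — or some such fixed scheme) are present; the nonemptiness-of-$J$ and connectivity constraints must be checked to be automatically satisfied or to cut out exactly the Boolean lattice.

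Second I would verify the order relation matches. By Remark~\ref{rem partition order}, $\fp \geq \fp'$ iff the partition of $\fp'$ refines that of $\fp$ with the additional bookkeeping that each $F'_\ell$ lies in some $N_i$ or $F_j$ while each $N'_k$ lies in some $N_i$. Under the encoding above, refinement should correspond exactly to inclusion of subsets (more active markers = finer partition = larger element, consistently with $\fp_0$, the identity correspondence with all markers active, being... wait, $\fp_0$ is the \emph{minimum}). I need to be careful about the direction: $\fp_0$ (all vertices in, $S = T$, no edges contracted, $J$ a single fiber) is the minimum of $\fP_T$, and it should correspond to the empty set (the cone point), with larger elements of $\fP_{T_n}$ corresponding to larger subsets / higher-dimensional faces. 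The rank function $\rho(\fp) = |V(T)| - |V(R)|$ must match $|A|$, giving the dimension count $\dim \sL_{T_n}^\link(\fp) = \rho(\fp)-1 = |A|-1 = \dim$ of the face of $sk_{n-2}\Delta^n$ indexed by a nonempty proper subset. That the maximal proper subsets have $n$ elements (codimension-one faces of $\Delta^n$, i.e.\ top faces of $sk_{n-2}\Delta^n$, of dimension $n-1$) matches $\dim \sL_{T_n}^\link = |V(T_n)| - 1 = n-1$, consistent with Corollary~\ref{cor bouquet}.

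The main obstacle I expect is pinning down the $(n+1)$-element marker set cleanly so that the two combinatorial constraints (nonempty $J$; for every $H\subseteq I$ the graph $T\setminus\bigcup_{i\in H}N_i$ connected) translate to \emph{no} constraint on the subset — i.e.\ so that the image is genuinely all of $2^{[n]}$ rather than a sub-poset. On a path, the connectivity condition forces the $N$-type regions to be the two \emph{ends} (a middle complementary region would disconnect $S$), so the type data is really just: left end is $N$ or not, right end is $N$ or not — two bits — plus the $n-1$ bits for surviving internal edges of $R$, total $n+1$ bits, with $J$ nonempty automatically unless both ends are all-$N$ and nothing is left, which is impossible when $n\geq 1$ once one is slightly careful about the degenerate extremes. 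Matching these $n+1$ bits to $[n] = \{0,1,\dots,n\}$ and checking that refinement $=$ inclusion, together with the degenerate small-$n$ cases (single vertex $\to$ point $= \Cone(\emptyset)$, two vertices $\to$ cone on three points $= \Cone(sk_0\Delta^2)$), completes the identification, and then Theorem~\ref{thm reg cc} upgrades the poset isomorphism to the asserted homeomorphism of regular cell complexes.
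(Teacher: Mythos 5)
Your overall strategy — reduce via Theorem~\ref{thm reg cc} to a poset isomorphism, use the partition language of Remarks~\ref{rem partition} and~\ref{rem partition order}, and encode a partition of the path by a subset of ``markers'' — is the right one, and it is essentially the paper's. But the target poset you aim for, and hence the bijection you sketch, are both wrong.

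First, the face poset of $sk_{n-2}\Delta^n$ is \emph{not} the poset of nonempty proper subsets of $[n]$; it is the poset of nonempty subsets of $[n]$ of size at most $n-1$ (a $k$-face of $\Delta^n$ is a subset of size $k+1$, and $sk_{n-2}$ keeps $k\leq n-2$). Nonempty proper subsets would be the face poset of $sk_{n-1}\Delta^n = \partial\Delta^n$, one skeleton higher. Second, coning adds \emph{only} a new minimum to the face poset, so even with your (incorrect) identification, after coning you would get the proper subsets of $[n]$ — i.e.\ $2^{[n]}\setminus\{[n]\}$ — not $2^{[n]}$. With the correct identification, the face poset of $\Cone(sk_{n-2}\Delta^n)$ is $\{A\subseteq[n]: |A|\leq n-1\}$. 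Your claim ``the real content is $\fP_{T_n}\cong 2^{[n]}$'' is therefore false, and one can see this by counting: for $n=2$, $\fP_{T_2}$ has $4$ elements (the identity correspondence plus the three points of $\sL_{T_2}^\link$), while $|2^{[2]}|=8$; for $n=3$, $|\fP_{T_3}|=11$ while $|2^{[3]}|=16$. The rank/dimension is also off: the maximal rank in $\fP_{T_n}$ is $\rho = n-1$, whereas the maximal elements of $2^{[n]}$ have cardinality $n+1$.

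The concrete failure in your encoding is that the $n+1$ proposed bits are not independent. If the left end of the path is declared type-$N$ so that $v_1\notin S$, then the edge $\{v_1,v_2\}$ cannot survive in $R$ (it is not even an edge of $S$), so its ``survival bit'' is forced to $0$; likewise the size of the $N$-region at each end is not captured by a single yes/no bit. The paper's fix is precisely to pass to the augmented path $\tilde T_n$ with two phantom end-vertices $v_0,v_{n+1}$: every element of $\fP_{T_n}$ becomes a partition of $\tilde T_n$ into consecutive intervals $N_0,F_1,\dots,F_r,N_n$ with $v_0\in N_0$, $v_{n+1}\in N_n$, and such a partition is uniquely recorded by the set of ``separating'' edges of $\tilde T_n$ — a subset of its $n+1$ edges, which is where the set $[n]$ comes from. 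The only constraint is that there be at least one $F$-part, i.e.\ at least two separating edges, and one checks that every subset of $[n]$ of size $\geq 2$ does arise. After complementation this is exactly the face poset of $sk_{n-2}\Delta^n$ in reversed order, which is what you need (the reversal is accounted for by the fact that refinement of partitions corresponds to \emph{more} separating edges, i.e.\ to going \emph{up} in $\fP_{T_n}$, so $\fp_0$ goes to the full set $[n]$ and gets dropped, and larger elements of $\fP_{T_n}$ go to smaller complements). You would need to adopt the augmented-tree device, or some equivalent uniformization, before the marker count comes out right.
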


\begin{proof}
Let  $\fP_n$ denote the poset of  nonempty subsets of $[n]$ containing at most $n-1$ elements
with the standard partial order: for $A, A' \subset [n]$, we set $A\geq A'$ if and only if $A\supset A'$.

By Theorem~\ref{thm reg cc},  it suffices to establish an isomorphism of posets 
$$
\xymatrix{
\varphi:\fP_{T_n} \setminus\{\fp_0\}  \ar[r]^-\sim & \fP_n
}$$

It will be  more straightforward to pass  to complements and think of $\fP_n$ as the poset of   proper subsets of $[n]$ containing at least two elements
with the opposite partial order: for $A, A' \subset [n]$, we have $A\geq A'$ if and only if $A\subset A'$.

Thus we will associate a subset   $\varphi(\fp)\subset [n]= \{0,1,  \ldots, n\}$ of at least 
two elements
to each correspondence
$$
\xymatrix{
\fp=(R &  \ar@{->>}[l]_-q  S \ar@{^(->}[r]^-i & T_n)
}
$$
where $i$ is the inclusion of a  subtree, and $q$ is a quotient map of trees.
(The  identity correspondence $\fp_0 \in \fP_{T_n}$ will go to the whole subset $ [n] \subset [n]$.)

It is useful to introduce the $A_{n+2}$-tree $\tilde T_n$ with vertices $V(\tilde T_n) = V(T_n) \cup\{v_{0}, v_{n+1}\}$
and an additional edge connecting $v_0$ and $v_1$ and another connecting $v_{n}$ and $v_{n+1}$.

Following Remark~\ref{rem partition}, we can think of elements of $\fP_{T_n}$ equally well as partitions of $\tilde T_n$ into connected subsets $N_0, F_1, \ldots, F_r, N_n$
with $v_0\in N_0$, $v_n\in N_n$, and $i(S) = F_1 \cup \cdots \cup F_r$.

We will identify the elements of $[n]= \{0,1,  \ldots, n\}$ with the edges of $\tilde T_n$ by matching $i \in [n]$ with the edge connecting $v_i$ and $v_{i+1}$, for all $i=0, \ldots, n$.

Now we define $\varphi$ by including the element $i \in \varphi(\fp) $ if and only if $v_i$ and $v_{i+1}$ are in different parts of the partition of $
\tilde T$  corresponding to $\fp$.
(In particular, we have $\varphi(\fp_0) =  [n] $.)
Note that $ \varphi(\fp) $ has at least two elements since $i(S)$ is nonempty, so that there is at least one part $F_1$ as well as the parts 
$N_0$ and $N_n$.
If  $\fp'$ refines $\fp$ in the sense of Lemma~\ref{lem:partition order} so that $\fp \geq \fp'$ then clearly we have $\varphi(\fp) \subset \varphi(\fp')$.
Finally, any such partition is uniquely determined by the collection of those edges separating its parts.

This concludes the proof of the proposition.
\end{proof}
%%%%%%%%%%%%%%%%%%%%%%%%%%%%%%%%%%%%%%%%%%%%%%%%%%%%%%%
%%%%%%%%%%%%%%%%%%%%%%%%%%%%%%%%%%%%%%%%%%%%%%%%%%%%%%%
%%%%%%%%%%%%%%%%%%%%%%%%%%%%%%%%%%%%%%%%%%%%%%%%%%%%%%%

\section{Arboreal hypersurfaces}

%%%%%%%%%%%%%%%%%%%%%%%%%%%%%%%%%%%%%%%%%%%%%%%%%%%%%%%

\subsection{Rectilinear version}

By a {\em rooted tree} $\dirT=(T, \rho)$, we will mean  a tree  $T$ equipped with a distinguished vertex $\rho\in V(T)$  called the {\em root vertex}.

The vertices $V(T)$ of a rooted tree naturally form a poset with the root vertex $\rho\in V(T)$ the  unique minimum
and $\alpha< \beta \in V(T)$ if the former is nearer to $\rho$ than the latter. 
To each non-root vertex $\alpha \not = \rho \in V(T)$ there is a unique {\em parent vertex} $\hat \alpha\in V(T)$ such that $\hat\alpha<  \alpha$ and there are no  vertices strictly between them.

Let us write $\R^\dirT = \R^{V(T)}$ for the Euclidean space 
of real tuples
$$
\xymatrix{
\{x_\gamma\},
\text{ with } \gamma\in V(T).
}
$$

\begin{defn}
Fix a rooted tree $\dirT =(T, \rho)$ and a vertex $\alpha\in V(T)$.
 
 (1) Define the quadrant $Q_\alpha \subset \R^\dirT$ to be the closed subspace
$$
Q_\alpha = \{ x_\beta \geq 0 \text{ for all } \beta \leq \alpha\}
$$

(2) Define the hypersurface $H_\alpha \subset \R^\dirT$ to be the boundary
$$
H_\alpha = \partial Q_\alpha
$$
\end{defn}

\begin{remark}\label{rem euclidean space}
Note that the hypersurface $H_\alpha \subset \R^\dirT$ is homeomorphic (in a piecewise linear fashion) to a Euclidean space of dimension $|V(T)| - 1$.
\end{remark}

\begin{defn}
The {\em rectilinear arboreal hypersurface} $H_\dirT$ associated to  a rooted tree $\dirT= (T, \rho)$ is the union of hypersurfaces
$$
H_\dirT =\bigcup_{\alpha\in V(T)}   H_\alpha \subset \R^\dirT
$$
\end{defn}

\begin{remark}
The rectilinear arboreal hypersurface  admits the less redundant presentation 
$$
H_\dirT =\bigcup_{\alpha\in V(T)} \{x_\alpha = 0,  x_\beta > 0 \text{ for all } \beta<  \alpha \}   \subset \R^\dirT
$$
\end{remark}

%%%%%%%%%%%%%%%%%%%%%%%%%%%%%%%%%%%%%%%%%%%%%%%%%%%%%%%

\subsection{Smoothed version}\label{sect sm arb}

We construct here a smoothed version of the rectilinear arboreal hypersurface of a rooted tree. We  will show in the next section that they are homeomorphic as embedded hypersurfaces inside of Euclidean space.

Fix a continuously differentiable function $b:\R_{>0}\to \R$ with the  properties:
\begin{enumerate}
\item $b$ is non-positive.
\item $\lim_{t\to 0} b(t) = 0$.
\item $\lim_{t\to 0} b'(t) = -\oo$. %, for all $k\geq 1$.
\item $b(t) = 0$, for $t\gg 0$.
\end{enumerate}

Given the above function $b:\R_{>0}\to \R$, choose a continuously differentiable function $f:\R^2\to \R$ with 
the  properties:
\begin{enumerate}
\item $f$ is a submersion.
\item $\{f(x_1, x_2)=0\} = \{x_1 = 0, x_2\geq 0\} \cup \{x_1> 0 , x_2 = b(x_1)\}$.
\item $\{f(x_1, x_2)>0\} =\{x_1>0, x_2>b(x_1)\}$.
\item $\{f(x_1, x_2)<0\} = \{ x_1 < 0\} \cup \{ x_1 = 0, x_2<0\} \cup \{x_1>0,  x_2<b(x_1)\}$.
\end{enumerate}

\begin{remark}
If  preferred, one can fix some $N\geq 1$, and arrange that  $\lim_{t\to 0} b^{(k)}(t) = -\oo$, for all $1\leq k\leq N$. Then one can choose $f$ to be correspondingly
highly differentiable. One can also take $N=\infty$ and then choose $f$ to be smooth. 
\end{remark}

%For each vertex $\alpha\in V(T)$, we will  inductively construct a function
%$$
%\xymatrix{
%h_\alpha:\R^{V(T)} \ar[r] & \R
%}
%$$

\begin{defn}
Fix a rooted tree $\dirT= (T, \rho)$.

(1) For the root vertex $\rho\in V(T)$,  set
$$
\xymatrix{
h_\rho = x_\rho:\R^{\dirT} \ar[r] & \R
}
$$

(2) For a non-root vertex $\alpha \not = \rho\in V(T)$,  inductively define 
$$
\xymatrix{
h_\alpha :\R^{\dirT} \ar[r] & \R & h_\alpha = f(h_{\hat \alpha}, x_\alpha)
}
$$
where $\hat \alpha \in V(T)$ is the  parent vertex of $\alpha$.
\end{defn}

\begin{remark}\label{rem coord depend}

(1) Note that   $h_\alpha$  depends only on the coordinates $x_\beta$, for  $\beta\leq \alpha$.

(2) Note also that  $h_\alpha  \geq 0$ implies   $ h_{\beta}  \geq 0$, for  $\beta\leq \alpha$. % so that $h_\alpha = 0$ implies  $h_{\hat\alpha}\geq 0$.

%
%(3) $\{h_\alpha = 0\} = \{h_\alpha =0, h_{\hat\alpha}>0\} \cup 
% \{h_\alpha =0, h_{\hat\alpha} = 0\}$.

\end{remark}

\begin{defn}
Fix a rooted tree $\dirT = (T, \rho)$ and a vertex $\alpha\in V(T)$.
 
(1) Define the halfspace $\sQ_\alpha \subset \R^\dirT$ to be the closed subspace
$$
\sQ_\alpha = \{ h_\alpha \geq  0\} 
$$

(2) Define the hypersurface $\sH_\alpha \subset \R^\dirT$ to be the zero-locus
$$
\sH_\alpha = \{ h_\alpha = 0\} 
$$

\end{defn}

%
%\begin{remark} Note that $\alpha\geq \beta$ implies $\sQ_\alpha \subset  \sQ_{\beta}$. 
%
%%Note also that $\sH_\alpha = (\sH_\alpha \cap \sQ^\circ_{\hat \alpha}) \cup  (\sH_\alpha \cap \sH_{\hat \alpha})$
%%where $\sQ^\circ_{\hat \alpha} = \sQ_{\hat \alpha} \setminus \sH_{\hat \alpha}$.
%
%
%\end{remark}

\begin{defn}

The {\em smoothed arboreal hypersurface} $\sH_\dirT$ associated to  a rooted tree $\dirT= (T, \rho)$ is the union
of hypersurfaces
$$
\sH_\dirT =\bigcup_{\alpha\in V(T)}   \sH_\alpha \subset \R^{\dirT}
$$

\end{defn}

\begin{remark}
The smoothed arboreal hypersurface  admits the less redundant presentation 
$$
\sH_\dirT =\bigcup_{\alpha\in V(T)} \{h_{x_\alpha} = 0,  h_{ x_{\hat\alpha}} > 0 \}   \subset \R^\dirT
$$
where $\hat \alpha \in V(T)$ is the  parent vertex of $\alpha$.
\end{remark}

%%%%%%%%%%%%%%%%%%%%%%%%%%%%%%%%%%%%%%%%%%%%%%%%%%%%%%%

\subsection{Comparison}
We next compare the rectilinear and smoothed arboreal hypersurfaces.

Given the  function $b:\R_{>0}\to \R$, define the continuous function $\varphi:\R^2\to \R$ by the formula
$$
\varphi(x_1, x_2) = \left\{\begin{array}{cl}
x_2 &  \text{ when } x_1\leq 0\\
x_2-b(x_1) & \text{ when }x_1> 0
\end{array}\right.
$$

\begin{defn}
(1) For the root vertex $\rho\in V(T)$,  set
$$
\xymatrix{
F_\rho:\R^{\dirT} \ar[r] & \R &  F_\rho = x_\rho
}
$$

(2) For a non-root vertex $\alpha \not = \rho\in V(T)$,  set
$$
\xymatrix{
F_\alpha :\R^{\dirT} \ar[r] & \R & F_\alpha = \varphi(h_{\hat \alpha}, x_\alpha)
}
$$
where $\hat \alpha \in V(T)$ is the unique parent of $\alpha$.

(3)
Define the continuous map
$$
\xymatrix{
F_\dirT:\R^{\dirT} \ar[r] & \R^{\dirT} & F_\dirT = \{F_\alpha\}
}
$$
\end{defn}

\begin{remark}
Note that $F_\alpha$  depends only on the coordinates $x_\beta$, for  $\beta\leq \alpha$.

\end{remark}

\begin{prop}\label{prop compare}
The map $F_\dirT:R^\dirT\to \R^\dirT$ is a homeomorphism and restricts to a homeomorphism
from the smoothed to rectilinear arboreal hypersurface
$$
\xymatrix{
F_\dirT: \sH_\dirT \ar[r]^-\sim & H_\dirT
}
$$
satisfying $F_\dirT(\sQ_\alpha) = Q_\alpha$, $F_\dirT(\sH_\alpha) = H_\alpha$, for all $\alpha\in V(T)$.
\end{prop}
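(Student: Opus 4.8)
The plan is to prove the homeomorphism statement first, and then to deduce the behaviour on the distinguished subsets from a purely pointwise comparison of the two ``building block'' functions $f$ and $\varphi$.

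\textbf{Step 1: $F_\dirT$ is a homeomorphism.} I would begin by rewriting $\varphi(s,t)=t-\tilde b(s)$, where $\tilde b\colon\R\to\R$ extends $b$ by zero on $\R_{\le 0}$; this $\tilde b$ is continuous precisely because $\lim_{t\to 0}b(t)=0$. Then for each non-root vertex $\alpha$ we have $F_\alpha=x_\alpha-\tilde b(h_{\hat\alpha})$, and since $h_{\hat\alpha}$ depends only on the coordinates $x_\beta$ with $\beta\le\hat\alpha<\alpha$, the map $F_\dirT$ takes the form $\{x_\gamma\}\mapsto\{x_\gamma-c_\gamma\}$ with each $c_\gamma$ a continuous function of the \emph{strictly smaller} coordinates $(x_\beta)_{\beta<\gamma}$ (and $c_\rho=0$). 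Any self-map of $\R^{V(T)}$ of this ``lower-triangular unipotent'' shape relative to a finite poset is a homeomorphism: along a linear extension of $V(T)$ one inverts one coordinate at a time by $x_\gamma=y_\gamma+c_\gamma\bigl((x_\beta)_{\beta<\gamma}\bigr)$, the right-hand side being already known and continuous in $y$ by the time $\gamma$ is reached; formally this is an induction on $|V(T)|$ peeling off a maximal vertex.

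\textbf{Step 2: pointwise comparison of $f$ and $\varphi$.} The crux is the elementary equivalences, to be read straight off the defining lists of regions,
\[
f(s,t)\ge 0\iff\bigl(s\ge 0\ \text{and}\ \varphi(s,t)\ge 0\bigr),\qquad f(s,t)> 0\iff\bigl(s> 0\ \text{and}\ \varphi(s,t)> 0\bigr).
\]
Both come down to comparing $t$ with $0$ (when $s=0$) or with $b(s)$ (when $s>0$). The only locus where $f$ and $\varphi$ genuinely disagree is the ray $\{s=0,\ t>0\}$, on which $f$ vanishes while $\varphi>0$; the extra sign condition on $s$ is exactly what cancels this discrepancy. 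I expect this small amount of bookkeeping around $s=0$ to be the one genuinely delicate point of the argument.

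\textbf{Step 3: matching $\sQ_\alpha$, $\sH_\alpha$, and $\sH_\dirT$.} Fix $\alpha$ and let $\rho=\beta_0<\beta_1<\dots<\beta_m=\alpha$ be the path from the root, so $\beta_{i-1}=\hat\beta_i$, $h_{\beta_i}=f(h_{\beta_{i-1}},x_{\beta_i})$ and $F_{\beta_i}=\varphi(h_{\beta_{i-1}},x_{\beta_i})$. An induction on $i$ fed by Step 2 then gives
\[
h_\alpha\ge 0\iff F_{\beta_i}\ge 0\ \text{for all}\ i,\qquad h_\alpha> 0\iff F_{\beta_i}> 0\ \text{for all}\ i.
\]
Since $Q_\alpha=\{x:\ x_{\beta_i}\ge 0,\ 0\le i\le m\}$ has interior $\{x:\ x_{\beta_i}>0,\ 0\le i\le m\}$, the first equivalence reads $\sQ_\alpha=F_\dirT^{-1}(Q_\alpha)$, i.e.\ $F_\dirT(\sQ_\alpha)=Q_\alpha$; subtracting the second from the first gives $\sH_\alpha=\{h_\alpha=0\}=F_\dirT^{-1}\bigl(Q_\alpha\setminus\operatorname{int}Q_\alpha\bigr)=F_\dirT^{-1}(H_\alpha)$, hence $F_\dirT(\sH_\alpha)=H_\alpha$. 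Taking the union over $\alpha\in V(T)$, and using that $F_\dirT$ is a bijection by Step 1, yields $F_\dirT(\sH_\dirT)=H_\dirT$. The only thing to stay alert to here is that $F_\alpha$ is built from $h_{\hat\alpha}$ rather than from $F_{\hat\alpha}$, but as $h_{\hat\alpha}$ remains a function of the original coordinates this is harmless.
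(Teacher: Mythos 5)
Your proof is correct and takes essentially the same route as the paper's: in both cases the substance is the sign comparison of $f$ against $\varphi$, propagated through the tree by induction, together with the observation that $F_\dirT$ is ``unitriangular'' relative to the poset order. Your Step~2 packages what the paper proves under the running hypothesis $h_{\hat\tau}\ge 0$ into a cleaner hypothesis-free biconditional, and the paper disposes of $\sH_\alpha$ slightly more briskly by passing to boundaries of $\sQ_\alpha$ inside the already-established homeomorphism rather than redoing the strict-inequality case, but these are differences of organization rather than of method.
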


\begin{proof}
We will proceed by induction on the size of the vertex set $V(T)$. In the base case when $V(T)$ has a single element, the assertions are evident:
$\sH_\dirT = H_\dirT =  \{0\}\subset \R$, and $F_\dirT:\R\to\R$ is the identity.

Now suppose $V(T)$ contains at least two elements. Let $\tau\in V(T)$ be a maximal vertex in the partial order (in particular $\tau$ will not be the root vertex $\rho$). Introduce the rooted tree $\dirT_\tau = (T_\tau, \rho)$ where we delete the vertex $\tau$ and the edge $\{\tau, \hat \tau\}$ where $\hat\tau\in V(T)$ is
the parent vertex of $\tau$.
Suppose the assertions are already established for $\dirT_\tau$.

Let us first show $F_\dirT:R^\dirT\to \R^\dirT$ is a homeomorphism given that $F_{\dirT_\tau}:R^{\dirT_\tau}\to \R^{\dirT_\tau}$
is a homeomorphism. Under the identification $\R^\dirT = \R^{\dirT_\tau} \times \R^{\{\tau\}}$,  by definition we have
$$
F_\dirT =(F_{\dirT_\tau}, \varphi(h_{\hat \tau}, x_\tau)) 
$$
By construction, we can regard $\varphi:\R^2\to\R$  as a family of homeomorphisms in its second variable depending on its first variable. (In fact, for fixed value of the first variable, the homeomorphism in the second variable is either the identity or a translation.) Since $h_{\hat \tau}$ is independent of the variable $x_\tau$, we see that  $F_\dirT$ is similarly a parameterized  homeomorphism over $\F_{\dirT_\tau}$, and hence itself a homeomorphism.

To finish the proof, it suffices to show for all  $\alpha \in V(T)$ that we have
%\begin{equation}\label{eq homeo}\tag{$\star_\dirT(\alpha)$}
$$
F_\dirT (\sQ_\alpha) = F_\dirT (\{h_\alpha \geq 0\}) = \{ x_\beta \geq 0 \text{ for all } \beta \leq \alpha\} = Q_\alpha
$$
%\end{equation}
since passing to  boundaries, we will have $F_\dirT(\sH_\alpha) = H_\alpha$, 
for all  $\alpha \in V(T)$,
and hence passing to the unions of the boundaries $F_\dirT(\sH_\dirT) = H_\dirT$.

Thus we will  show that for all $\alpha\in V(T)$, and $x\in \R^\dirT$, we have
$$
\xymatrix{
h_\alpha(x)\geq 0 
& \iff & 
F_\beta(x) \geq 0, \text{ for all } \beta\leq \alpha
}$$

Recall that for all $\alpha\in V(T_\tau)$, the functions $h_\alpha, F_\alpha$ depend only on the subtree $\dirT_\tau$.
Hence by induction,  for all $\alpha\in V(T_\tau)$, and $x \in \R^{\dirT}$, we have
 $$
\xymatrix{
 h_\alpha(x) \geq 0 
& \iff & 
 F_\beta(x) \geq 0, \text{ for all } \beta\leq \alpha
}$$
Therefore it suffices to show 
$$
\xymatrix{
h_\tau(x)  \geq 0 
& \iff & 
F_\beta(x) \geq   0, \text{ for all } \beta\leq \tau
}$$

Recall that $h_\tau(x)  \geq 0$ implies $h_\beta(x)  \geq 0$, for all $\beta\leq \tau$. Hence by induction,
it suffices to assume  $h_{\hat\tau}(x) \geq   0$, where $\hat \tau \in V(T)$ is the  parent vertex of $\tau$, and  show
$$
\xymatrix{
h_\tau(x)  \geq 0 
& \iff & 
F_\tau(x) \geq   0
}$$

%Writing $x=(y, x_\tau) \in \R^\dirT = \R^{\dirT_\tau} \times \R^{\{\tau\}}$, 
Returning to the definitions,
on the one hand, 
we have  $h_\tau(x) = f(h_{\hat\tau}(x), x_\tau)$. Under the assumption $h_{\hat\tau}(x) \geq   0$, the prescribed properties of $f$ ensure  
$$
\xymatrix{
h_\tau(x) =f(h_{\hat\tau}(x), x_\tau)\geq 0
& \iff & 
x_\tau\geq b(h_{\hat\tau}(x))
}$$
On the other hand, 
we have $F_\tau(x) = \varphi(h_{\hat\tau}(x), x_\tau)$. 
Under the assumption $h_{\hat\tau}(x) \geq   0$, we have $ \varphi(h_{\hat\tau}(x), x_\tau)  = x_\tau - b(h_{\hat\tau}(x))$.
Thus we similarly conclude
$$
\xymatrix{
F_\tau(x) = \varphi(h_{\hat\tau}(x), x_\tau) \geq 0
 & \iff & 
x_\tau - b(h_{\hat\tau}(x))\geq 0
}$$
\end{proof}

\begin{remark}
By scaling the original function $b$ by a positive constant, one  obtains a family of smoothed arboreal hypersurfaces
all compatibly homeomorphic. Moreover, their limit as the scaling constant goes to zero will  be  the  rectilinear arboreal hypersurface.
Thus one can view the smoothed arboreal hypersurface  as a topologically trivial deformation of the rectlinear arboreal hypersurface.
\end{remark}

%%%%%%%%%%%%%%%%%%%%%%%%%%%%%%%%%%%%%%%%%%%%%%%%%%%%%%%

\subsection{Directed hypersurfaces}\label{sect dir arb}

Let us first review some notions from microlocal geometry.

Let $M$ denote a smooth $n$-dimensional manifold with $\pi:T^*M\to M$ its  cotangent bundle. 
Let  $P^*M$
denote the projectivization of $T^*M$.
  Points of $P^*M$
 are pairs $(x, [v])$ where $x\in M$
and
 $[v] = \R \cdot v \subset T^*_xM$
 is the line through
  $v \not = 0 \in T_x^* M$.
Let 
$S^*M$
denote the spherical projectivization of $T^*M$.
  Points of  $S^*M$ are pairs $(x, [v])$ where $x\in M$
and
 $[v] = \R_{\geq 0}\cdot v \subset T^*_xM$ is the ray through  $v \not = 0 \in T_x^*M$.

Given a submanifold $Y \subset M$, 
let $
T^*_{Y }M \subset T^*M
$
denote its conormal bundle.
Let  $
P^*_{Y }M \subset P^*M
$
denote the projectivized conormal bundle.
 Points of $P^*_{Y}M$ are pairs $(y, [v])$ where $y\in Y$
and
$[v] = \R \cdot v \subset T^*_Y M|_y$ is the line through $v \not = 0 \in T^*_{Y} M|_y$.
Let  $
S^*_{Y }M \subset S^*M
$
denote the spherically projectivized conormal bundle.
Points of $S_Y^*M $ are pairs $(y, [v])$ where $y\in Y$
and
$[v] = \R_{\geq 0}\cdot v \subset T^*_Y M|_y$ is the ray through $v \not = 0 \in T^*_{Y} M|_y$.
Suppose $M$ is equipped with a complete Riemannian metric.
Then the spherical projectivization $S^* M$ is identified with the unit sphere bundle inside of  $T^*M$.

Throughout what follows, by a hypersurface $H\subset M$, we will mean a closed subspace such that $M$ admits a Whitney stratification with $H$
the closure of the $(n-1)$-dimensional stratum.
Thus there exists an open dense  locus $H^{\sm}\subset H$ which is a locally closed $(n-1)$-dimensional differentiable submanifold of $M$. 

We have a natural diagram of maps
$$
\xymatrix{
S^*_{H^{sm}} M \ar[r] & P^*_{H^{sm}} M \ar[r] & H^{sm}
}
$$
where the first  is a two-fold cover and the second is a diffeomorphism.

\begin{defn}
A hypersurface $H\subset M$, with open dense smooth locus $H^{sm}\subset H$, is said to be in {\em good position} if the closure %of the  spherically projectivized conormal bundle along the smooth locus
$$
\cL^*_H= \ol{P^*_{H^{\mathit{sm}}} M} \subset P^* M,
$$ 
is finite over $H$.
If this holds, we refer to $\cL^*_H$ as 
%will denote the closure by
%$$
%\ell^*_H = \ol{P^*_{H^{\mathit{sm}}} \R^{n+1}} \subset P^*\R^{n+1}
%$$ 
%and refer to it as 
the {\em coline bundle} of $H$. 
\end{defn}

\begin{remark}
Equivalently, we could require  the closure %of the  spherically projectivized conormal bundle along the smooth locus
$$
\cR^*_H= \ol{S^*_{H^{\mathit{sm}}} M} \subset S^* M
$$ 
 be finite over $H$.
If this holds, we refer to $\cR^*_H$ as 
%We will  denote the closure by
%$$
%r^*_H = \ol{P^*_{H^{\mathit{sm}}} \R^{n+1}} \subset P^*\R^{n+1}
%$$ 
the {\em coray bundle} of $H$. 
\end{remark}

\begin{remark}
If $H\subset M$ is in good position, we have a natural diagram of finite maps  
$$
\xymatrix{
\cR^*_H \ar[r] &  \cL^*_H \ar[r] & H
}
$$
where the first is a two-fold cover and the second is a diffeomorphism over $H^{sm}\subset H$.

\end{remark}
\begin{example}
(1) All  Whitney stratified plane curve singularities are in good position.

(2) The  singular  quadratic  cones 
$$
\xymatrix{
\displaystyle
\{\sum_{i=1}^k x_i^2 - \sum_{j=k+1}^n x_j^2 = 0 \}\subset \BR^n
& n> 2, n>k>0 
}
$$
are not in good position.
\end{example}

%
%\begin{remark}
%If $H\subset M$ is a hypersurface in good position, then its projectivized coline bundle 
%$$
%\cL^\oo_H = (\cL^*_H\setminus H)/\R_+ \subset T^\oo\R^{n+1}
%$$ 
%is a Legendrian subvariety.
%\end{remark}

\begin{defn}

(1) 
By a {\em coorientation}  of   a hypersurface $H\subset M$  in good position,
we will  mean   a section 
$$
\xymatrix{
\cR^*_H  \ar[r] &  \cL^*_{H} \ar@/_0.75pc/[l]_-\sigma
}$$
 of the natural two-fold cover from the coray to coline bundle.

(2)
By a  {\em directed hypersurface} $(H, \sigma)$, we will mean a hypersurface $H\subset M$ in good position equipped with a coorientation $\sigma$.

(3) 
By the {\em positive ray bundle} of a directed hypersurface $(H, \sigma)$,
we will mean the image of the  coorientation  
$$
\cR_H^+ = \sigma(\cL^*_H) \subset S^*M
$$
\end{defn}

%We next present a directed version of rectilinear arboreal hypersurfaces. 

\medskip

Now let us return to a rooted tree $\dirT=(T, \rho)$ and its smoothed arboreal hypersurface 
$$
\sH_\dirT =\bigcup_{\alpha\in V(T)}   \sH_\alpha \subset \R^{\dirT}
$$

Since $f$ is a submersion, each $h_\alpha:\R^\dirT \to \R$ is a submersion, hence each  hypersurface $\sH_\alpha =\{h_\alpha=0\} \subset \R^\dirT$
is in good position with the natural projection a homeomorphism
$$
\xymatrix{
\pi:\cL^*_{\sH_\alpha} \ar[r]^-\sim & \sH_\alpha
}$$ 
Thus
 the smoothed arboreal hypersurface $\sH_T \subset \R^\dirT$ is in good position (since it is  a finite union of these hypersurfaces).
 
 Moreover,  each  hypersurface $\sH_\alpha  \subset \R^\dirT$
  comes equipped with a preferred coorientation $\sigma_\alpha$
%  $$
%  \xymatrix{
%  \sigma_\alpha:\cL^*_{\sH_\alpha} \ar[r] & \cR^*_{\sH_\alpha}
% }
%  $$ 
given by the codirection pointing towards the halfspace 
  $\sQ_\alpha =\{h_\alpha\geq 0\} \subset \R^\dirT$. 
  
%  By Lemma~\ref{}, these codirections  agree wherever any pair $\sH_\alpha, \sH_\beta  \subset \R^\dirT$ are tangent to each other.
%
%We conclude that the directed arboreal hypersurface $\sH_\dirT \subset \R^\dirT$ is in good position with a natural coorientation $\sigma$ whose restriction to each $\sH_\alpha\subset \sH_\dirT$ is  the  coorientation $\sigma_\alpha$ 

\begin{thm}\label{thm:arbordirect}
Let $\dirT = (T, \rho)$ be a rooted tree 
with  arboreal singularity $\sL_T$
and smoothed arboreal hypersurface $\sH_\dirT \subset \R^\dirT$,

(1) The smoothed arboreal hypersurface $\sH_\dirT \subset \R^\dirT$ admits a natural coorientation $\sigma$ whose restriction to each $\sH_\alpha\subset \sH_\dirT$ is  the  coorientation $\sigma_\alpha$.

(2) Let  $\cL_{\sH_\dirT}^* \subset P^*\R^\dirT$ be the coline bundle of $\sH_{\dirT}\subset \R^\dirT$.
There is a homeomorphism 
$$
\xymatrix{
\varphi:\sL_T\ar[r]^-\sim &  \cL_{\sH_\dirT}^*
 }
 $$
whose   composition with the  natural  projection $\pi: \cL_{\sH_\dirT}^*\to \sH_\dirT$ restricts to homeomorphisms
 $$
 \xymatrix{
\pi\circ \varphi|_{\sL_T(\alpha)}: \sL_T(\alpha)\ar[r]^-\sim &  \sH_\alpha, & 
\text{ for all }  \alpha\in V(T)
}
$$

\end{thm}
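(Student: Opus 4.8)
The plan is to prove both assertions simultaneously by induction on $n=|V(T)|$, in the strengthened form that $\varphi$ carries each stratum $\sL_T(\alpha)$ onto the conormal line bundle $\cL^*_{\sH_\alpha}$ of the smooth closed hypersurface $\sH_\alpha=\{h_\alpha=0\}$ — so that in particular $\cL^*_{\sH_\dirT}=\bigcup_{\alpha\in V(T)}\cL^*_{\sH_\alpha}$ — and that $\sigma$ restricts to $\sigma_\alpha$ on each $\sH_\alpha$. The base case $n=1$ is trivial. For the inductive step fix a leaf $\tau\neq\rho$, necessarily maximal in the poset $V(T)$, with parent $\alpha=\hat\tau$, and put $\dirT_\tau=(T_\tau,\rho)$ with $T_\tau=T\setminus\{\tau\}$, so $\R^\dirT=\R^{\dirT_\tau}\times\R^{\{\tau\}}$. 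Since $\tau$ is a leaf, $h_\gamma$ is independent of $x_\tau$ for every $\gamma\in V(T_\tau)$; writing $\sH_\gamma^{\dirT_\tau}\subset\R^{\dirT_\tau}$ for the corresponding hypersurface of $\dirT_\tau$, we get $\sH_\gamma=\sH_\gamma^{\dirT_\tau}\times\R^{\{\tau\}}$ with purely horizontal conormals, hence $\cL^*_{\sH_\gamma}=\cL^*_{\sH_\gamma^{\dirT_\tau}}\times\R^{\{\tau\}}$, and summing over $\gamma\in V(T_\tau)$ and using the inductive hypothesis for $\dirT_\tau$,
\[
\bigcup_{\gamma\in V(T_\tau)}\cL^*_{\sH_\gamma}=\cL^*_{\sH_{\dirT_\tau}}\times\R^{\{\tau\}}.
\]

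The crux is to locate $\cL^*_{\sH_\tau}$. From the defining properties of $f$ one has $\sH_\tau=\{h_\alpha=0,\ x_\tau\geq0\}\cup\{h_\alpha>0,\ x_\tau=b(h_\alpha)\}$. On the first piece $\sH_\tau$ lies in, and is tangent to, $\sH_\alpha$: there $\nabla h_\tau=(\partial_1 f)(0,x_\tau)\,\nabla h_\alpha$ with $(\partial_1 f)(0,x_\tau)>0$ for $x_\tau>0$, by the sign behaviour of $f$ near its corner $\{x_1=0,\ x_2\geq0\}$; so this piece adds nothing to $\cL^*_{\sH_\dirT}$ beyond the part of $\cL^*_{\sH_\alpha}=\cL^*_{\sH_\alpha^{\dirT_\tau}}\times\R^{\{\tau\}}$ over $\{x_\tau\geq0\}$, with the same coorientation. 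On the graph piece $G=\{h_\alpha>0,\ x_\tau=b(h_\alpha)\}$ the conormal $\R\nabla h_\tau$ has a nonzero $dx_\tau$-component, so it meets none of the (horizontal) conormals of the $\sH_\gamma$, $\gamma\in V(T_\tau)$. As $\sH_\tau$ is smooth, $\pi\colon\cL^*_{\sH_\tau}\to\sH_\tau$ is a homeomorphism, and under it $\cL^*_{\sH_\tau}$ is the union of an ``overlap half'' $\cong\sH_\alpha^{\dirT_\tau}\times\R_{\geq0}$ — sitting inside $\cL^*_{\sH_{\dirT_\tau}}\times\R^{\{\tau\}}$ as the part of $\cL^*_{\sH_\alpha}$ over $\{x_\tau\geq0\}$ — and a complementary closed half-space $\cong\cL^*_{\sH_\alpha^{\dirT_\tau}}\times\R_{\leq0}$ meeting $\cL^*_{\sH_{\dirT_\tau}}\times\R^{\{\tau\}}$ only along its boundary $\cL^*_{\sH_\alpha^{\dirT_\tau}}\times\{0\}$. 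Combining with the previous display,
\[
\cL^*_{\sH_\dirT}\ \simeq\ \bigl(\cL^*_{\sH_{\dirT_\tau}}\times\R^{\{\tau\}}\bigr)\ \coprod_{\cL^*_{\sH_\alpha^{\dirT_\tau}}\times\{0\}}\ \bigl(\cL^*_{\sH_\alpha^{\dirT_\tau}}\times\R_{\leq0}\bigr),
\]
compatibly with the projections to $\sH_\dirT$, exactly parallel to the decomposition $\sL_T\simeq(\sL_{T_\tau}\times\R^{\{\tau\}})\coprod_{\sL_{T_\tau}(\alpha)\times\{0\}}(\sL_{T_\tau}(\alpha)\times\R^{\{\alpha\}}_{\leq0})$ of Lemma~\ref{lem  term}.

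Feeding in the inductive homeomorphism $\varphi_\tau\colon\sL_{T_\tau}\to\cL^*_{\sH_{\dirT_\tau}}$, which restricts to $\sL_{T_\tau}(\alpha)\xrightarrow{\sim}\cL^*_{\sH_\alpha^{\dirT_\tau}}$, the rules ``$\varphi_\tau\times\id$ on $\sL_{T_\tau}\times\R^{\{\tau\}}$'' and ``$(\varphi_\tau|_{\sL_{T_\tau}(\alpha)})\times\id$ on the new half-space'' agree on the gluing locus $\sL_{T_\tau}(\alpha)\times\{0\}$, hence assemble to a homeomorphism $\varphi\colon\sL_T\to\cL^*_{\sH_\dirT}$ of the two pushouts, intertwining the projections to $\sH_\dirT$. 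By construction $\varphi$ sends $\sL_T(\alpha')$ onto $\cL^*_{\sH_{\alpha'}}$ for every $\alpha'\in V(T)$: for $\alpha'\in V(T_\tau)$ because $\varphi_\tau$ does, and for $\alpha'=\tau$ because the $\{x_\alpha\geq0\}$-half of $\sL_T(\tau)$, identified via the $\{\tau,\alpha\}$-edge gluing with the $\{x_\tau\geq0\}$-part of $\sL_T(\alpha)$, is sent to the overlap half of $\cL^*_{\sH_\tau}$ while the new half-space is sent to the complementary half. Since $\pi\colon\cL^*_{\sH_{\alpha'}}\to\sH_{\alpha'}$ is a homeomorphism (each $\sH_{\alpha'}$ is smooth), $\pi\circ\varphi|_{\sL_T(\alpha')}$ is a homeomorphism onto $\sH_{\alpha'}$; this is part (2). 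For part (1), the preferred coorientations $\sigma_\gamma$ (toward $\sQ_\gamma=\{h_\gamma\geq0\}$) agree wherever two of the $\cL^*_{\sH_\gamma}$ meet: descending the chain of parents reduces the comparison to a point with $h_{\hat\gamma}=0$, $x_\gamma\geq0$, where $\nabla h_\gamma$ is a positive multiple of $\nabla h_{\hat\gamma}$ by the sign computation above, so they glue to the required $\sigma$ (in the step using that coorientations of $\sH_{\dirT_\tau}\times\R^{\{\tau\}}$ are pulled back from $\sH_{\dirT_\tau}$).

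I expect the structural step of the second paragraph to be the main obstacle. The subtle feature is that $\sH_\tau$ is a full $(n-1)$-dimensional smooth hypersurface, yet only ``half'' of it — the graph piece $G$ — contributes genuinely new points to the coline bundle, the other half coinciding tangentially with a portion of $\sH_\alpha$. Making this precise requires the careful gradient and sign analysis of $f$ near its corner locus $\{x_1=0,\ x_2\geq0\}$, together with checking that the conormals along $G$ avoid every horizontal conormal inherited from $T_\tau$ — so that the pushout presentation of $\cL^*_{\sH_\dirT}$ is clean even though the analogous presentation of $\sH_\dirT$ itself is not, which is exactly why one works with the coline bundle rather than with $\sH_\dirT$ directly.
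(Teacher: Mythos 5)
Your proposal is correct and follows essentially the same route as the paper: induction on $|V(T)|$ by deleting a maximal leaf $\tau$, splitting $\sH_\tau$ into the flat piece (tangent to $\sH_{\hat\tau}$, contributing nothing new to the coline bundle) and the graph piece (a new closed half-space attached along the conormals of $\sH_{\hat\tau}$ over $\{x_\tau=0\}$), matching the pushout presentation of Lemma~\ref{lem  term}, and checking coorientations along the overlap. Your explicit chain-rule and sign analysis of $f$ (including at the corner, where $\partial_2 f(0,0)=0$ forces the limiting conormal to be that of $\sH_{\hat\tau}$) just makes precise what the paper leaves implicit in its identifications.
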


\begin{remark}
Note that the composition $\pi\circ \varphi:\sL_T \to \sH_{\dirT}$ will not in general be a homeomorphism but only a finite map. It is possible that distinct points of $\sL_T$ will map to the same point of $\sH_\dirT$ but correspond to different coorientations at that point.
\end{remark}

\begin{proof}[Proof of Theorem~\ref{thm:arbordirect}]
The bulk of the proof will be of assertion (2)

We will proceed by induction on the size of the vertex set $V(T)$. In the base case when $V(T)$ has a single element, all assertions are evident:
$\sH_\dirT = \{0\}\subset \R$, the coorientation points towards the halfspace $\R_{\geq 0}\subset \R$, and $\sL_T  = \{0\}$.

Now suppose $V(T)$ contains at least two elements. Let $\tau\in V(T)$ be a maximal vertex in the partial order. Introduce the rooted tree $\dirT_\tau = (T_\tau, \rho)$ where we delete the vertex $\tau$ and the edge $\{\tau, \hat \tau\}$ where $\hat\tau\in V(T)$ is
the parent vertex of $\tau$.

Suppose the assertions of the theorem are established for $\dirT_\tau$. 
%Let us build $\sH_\dirT \subset \R^\dirT$ starting
%from $\sH_{\dirT_\tau} \subset \R^{\dirT_\tau}$.
By definition, inside of  
$ \R^\dirT$, % = \R^{\dirT_\tau} \times\R^{\{\tau\}}$,
we have an identification of hypersurfaces
$$
\xymatrix{
\sH_{\dirT} =\left( \sH_{\dirT_\tau} \times\R^{\{\tau\}} \right) \cup \sH_\tau
}
$$
where each factor in the union is a closed subspace.
Therefore 
inside of  $ P^*\R^\dirT$,
we have an identification of subspaces
\begin{equation}\label{eq micro union}
\xymatrix{
\cL^*_{\sH_{\dirT}} =\cL^*_{\sH_{\dirT_\tau} \times\R^{\{\tau\}}} \cup \cL^*_{\sH_\tau}
}
\end{equation}
Note that the first factor in the union admits the presentation
$$
\xymatrix{
\cL^*_{\sH_{\dirT_\tau} \times\R^{\{\tau\}}}\simeq \cL^*_{\sH_{\dirT_\tau}} \times \R^{\{\tau\}}
}
$$

Now let us more closely analyze the  second factor $\cL^*_{\sH_\tau}$ in the union~(\ref{eq micro union}).
By Proposition~\ref{prop compare} and Remark~\ref{rem euclidean space}, there is a homeomorphism
\begin{equation}\label{eq homeo}
\sH_\tau\simeq \R^{|V(T)| - 1}
\end{equation}

%and hence a homeomorphism
%$$
%\cL^*_{\sH_\tau}\simeq \R^{|V(T)| - 1}
%$$

Introduce the subspaces
$$
\xymatrix{
\sH^+_\tau = \sH_\tau \cap \{h_{\hat \tau} >0\} &
\sH^0_\tau = \sH_\tau \cap \{h_{\hat \tau} =0\}&
\sH^-_\tau = \sH_\tau \cap \{h_{\hat \tau} <0 \}
& 
}
$$
Recall that $h_{\tau} \geq 0$ implies $h_{\hat\tau} \geq 0$, so that $\sH^-_\tau=\emptyset$ and hence
$$
\xymatrix{
\sH_\tau = \sH_\tau^+ \cup \sH_\tau^0
}$$
Note as well that 
$$
\xymatrix{
\sH^0_\tau \subset \sH_{\hat\tau}\subset  \sH_{\dirT} \times \R^{\{\tau\}}
}$$

Let $\bar \sH^{+}_\tau \subset \sH_\tau$
denote the closure of $\sH^+_\tau$. Then since $\sH^-_\tau=\emptyset$ and $\sH^0_\tau \subset \sH_{\dirT} \times \R^{\{\tau\}}$,  the union~(\ref{eq micro union}) admits the refinement
\begin{equation}\label{eq ref micro union}
\xymatrix{
\cL^*_{\sH_{\dirT}} =\cL^*_{\sH_{\dirT_\tau} \times\R^{\{\tau\}}} \cup \cL^*_{\bar \sH^+_\tau}
}
\end{equation}
where each factor in the union is a closed subspace.

By Proposition~\ref{prop compare}, the  
homeomorphism (\ref{eq homeo}) can be chosen to restrict to a homeomorphism
\begin{equation}\label{eq wing}
\xymatrix{
\bar \sH^+_\tau \simeq    \R^{|V(T)| - 2} \times \R_{\geq 0}
}
\end{equation}
Furthermore, again by Proposition~\ref{prop compare}, under the above identifications, we have
\begin{equation}\label{eq bdy}
\xymatrix{
\partial \bar \sH^+_\tau \simeq  \R^{|V(T)| - 2} \times  \{0\}  \simeq \sH_{\hat \tau} \cap\{x_\tau = 0\} 
}
\end{equation}

Next observe that  projection along the $\tau$-direction is a diffeomorphism
$$
\xymatrix{
\sH^+_\tau =\{h_{\hat \tau} > 0 , h_\tau= 0\} \ar[r]^-\sim & \{h_{\hat\tau} >0, x_\tau = 0\}
}
$$
so that we have the non-intersection of coline bundles
$$
\xymatrix{
\cL^*_{\sH_{\dirT_\tau} \times\R^{\{\tau\}}} \cap \cL^*_{\sH_\tau^+} = \emptyset
}
$$

We conclude by the identifications~(\ref{eq ref micro union}), (\ref{eq wing}), (\ref{eq bdy}), and induction, there is the required homeomorphism 
$$
\xymatrix{
\sL_T\ar[r]^-\sim &  \cL_{\sH_\dirT}^*
 }
 $$
 since we have the presentation
 $$
 \cL^*_{\sH_\dirT}  \simeq ( \cL^*_{\sH_{\dirT_\tau}} \times\R^{\{\tau\}} ) \coprod_{ \R^{|V(T)| - 2} \times \{0\}} ( \R^{|V(T)| - 2} \times
  \R_{\geq 0} )
 $$
  exactly as appears for $\sL_T$ in Lemma~\ref{lem term}.

Finally, to see assertion (1), first note that the coorientation of $\sH_{\dirT_\tau}$ naturally extends to a coorientation of  $\sH_{\dirT_\tau} \times \R^{\{\tau\}}$. Thus by induction, it suffices to return to the union~(\ref{eq micro union}) and check that the coorientations
of $\sH_{\dirT_\tau} \times \R^{\{\tau\}}$ and $\sH_\tau$ agree along their intersection 
$$
\sH_\tau^0 = \{h_\tau  = h_{\hat \tau} = 0\}
$$
Recall that $h_\tau\geq  0$ implies $h_{\hat \tau} \geq 0$, or in other words $\sQ_\tau \subset \sQ_{\hat \tau}$. By definition, the coorientations of $\sH_\tau, \sH_{\hat \tau}\subset \R^\dirT$ point towards the respective halfspace $\sQ_\tau, \sQ_{\hat \tau}\subset \R^\dirT$, hence the coorientations of $\sH_{\dirT_\tau} \times \R^{\{\tau\}}$ and $\sH_\tau$ agree along  
$
\sH_\tau^0.
$
\end{proof}

\section{Microlocal sheaves}

%%%%%%%%%%%%%%%%%%%%%%%%%%%%%%%%%%%%%%%%%%%%%%%%%%%%%%%

\subsection{Stalk calculation}

Fix a rooted tree $\dirT = (T, \rho)$ where as usual $T$ is a tree and $\rho\in V(T)$ is the root vertex.
Recall that $\R^\dirT$ denotes the Euclidean space of real tuples 
$$
\xymatrix{
\{x_\gamma\},
\text{ with } \gamma\in V(T).
}
$$
and $S^*\R^\dirT$ denotes its spherically projectivized cotangent bundle. The latter is naturally a cooriented contact manifold.

Recall that we have constructed the smoothed arboreal hypersurface 
$$
\sH_\dirT = \bigcup_{\alpha\in V(T)} \sH_\alpha \subset \R^\dirT
$$
It is in good position and comes equipped with a natural coorientation so that its 
positive ray bundle % $\cR^+_{\sH_\dirT}\subset S^*\R^\dirT$ 
is homeomorphic to the arboreal singularity 
$$
\sL_T = \bigcup_{\alpha\in V(T)} \sL_T(\alpha) 
$$
Via this identification, we can regard $\sL_T$ and its Euclidean constituents $\sL_T(\alpha)$ as Legendrian subspaces
 of $S^*\R^\dirT$. When doing so, we will use $\dirT$ in the notation in place of $T$.
%$$
%\sL_\dirT = \bigcup_{\alpha\in V(T)} \sL_\dirT(\alpha) \subset S^*\R^\dirT
%$$

\medskip

Fix once and for all a field $k$, and let $\Sh(\R^\dirT)$ denote the dg category of cohomologically constructible complexes of sheaves of $k$-vector spaces on $\R^\dirT$.
%Objects are complexes of sheaves of   $k$-vector spaces on $\R^\dirT$ with constructible cohomology.

Our main object of study will be  the dg category  $\Sh_{\sL_\dirT}(\R^\dirT)$  of constructible complexes of $k$-vector spaces on $\R^\dirT$
microlocalized along the Legendrian subspace
$
\sL_\dirT \subset S^*\R^\dirT.
$
There are two equivalent ways to think about $\Sh_{\sL_\dirT}(\R^\dirT)$ in terms of $\Sh(\R^\dirT)$ which we will now explain.

To any object $\cF\in \Sh(\R^\dirT)$, one can associate its singular support $\ssupp(\cF) \subset S^*\R^\dirT$. This is a closed Legendrian subspace recording those codirections in which the propagation of sections of $\cF$ is obstructed. In particular, one has the vanishing $\ssupp(\cF) = \emptyset$ if and only if the cohomology sheaves of $\cF$ are locally constant. 

\begin{remark}
To fix standard conventions~\cite{KS}, suppose $i:U\to \R^\dirT$ is the inclusion of an open subset with a smooth boundary hypersurface $\partial U \subset \R^\dirT$. Then the singular support of the extension by zero of the constant sheaf $i_! k_U\in \Sh(\R^\dirT)$ consists of the spherical projectivization of the outward conormal codirection along $\partial U \subset \R^\dirT$.
\end{remark}

Abstractly, one can define $\Sh_{\sL_\dirT}(\R^\dirT)$ as the dg quotient category of $\Sh(\R^\dirT)$ by the full subcategory of all objects $\cF$
for which
$\ssupp(\cF) \cap \sL_\dirT = \emptyset$.
Equivalently, one can take $\Sh_{\sL_\dirT}(\R^\dirT)$ to be the full dg  subcategory of $\Sh(\R^\dirT)$ consisting of objects
$\cF$ for which:
\begin{enumerate}
\item $\ssupp(\cF) \subset \sL_\dirT$, and
\item $\Hom_{\Sh(\R^\dirT)}(k_{\R^\dirT}, \cF) \simeq 0$.
\end{enumerate}

We will now give two concrete collections of generators for this subcategory, and one could take their triangulated hulls inside of  $\Sh(\R^\dirT)$ as the definition of $\Sh_{\sL_\dirT}(\R^\dirT)$. 

Recall that for each $\alpha\in V(T)$, the hypersurface $\sH_\alpha \subset \R^\dirT$ is the zero locus of the function 
$$
\xymatrix{
h_\alpha:\R^\dirT\ar[r] &  \R
}$$
%The restriction of its differential provides a homeomorphism 
%$$
%\xymatrix{
%dh_\alpha:{\sH_\alpha} \ar[r]^-\sim &  \sL_\alpha
%}
%$$
Consider the inclusion of the open subspace 
$$
\xymatrix{
i_\alpha:U_\alpha = \{h_\alpha <0\} \ar@{^(->}[r] &  \R^\dirT
}$$
Introduce the extension by zero 
$$\cP_\alpha = i_{\alpha!}k_{U_\alpha} \in  \Sh(\R^\dirT)
$$
Observe the elementary properties:
\begin{enumerate}
\item $\ssupp(\cP_\alpha) = \sL_\dirT(\alpha)$, 
\item $\Hom_{\Sh(\R^\dirT)}(k_{\R^\dirT}, \cP_\alpha) \simeq 0$.
\end{enumerate}

Alternatively,
recall that to each non-root vertex $\alpha \not = \rho \in V(T)$ there is a unique  parent vertex $\hat \alpha\in V(T)$ such that $\alpha> \hat \alpha$ and there are no  vertices strictly between them.
Consider the inclusion of the open subspace 
$$
\xymatrix{
j_\alpha:W_\alpha = \{h_\alpha <0, h_{\hat \alpha} >0\} \ar@{^(->}[r] &  U_\alpha  = \{h_\alpha <0\}
}$$
Introduce the iterated extension 
$$\cS_\alpha = i_{\alpha!} j_{\alpha*} k_{W_\alpha} \in  \Sh(\R^\dirT)
$$
For the root vertex $\rho\in V(T)$, set 
$$
\cS_\rho = \cP_\rho =  i_{\alpha!}k_{U_\alpha} \in  \Sh(\R^\dirT)
$$

Observe the collection of canonical exact triangles
$$
\xymatrix{
\cP_{\hat \alpha} = i_{\hat\alpha!} i_{\hat\alpha}^!\cP_\alpha \ar[r] & \cP_{\alpha} \ar[r] & i_{\alpha!}j_{\alpha*}j_\alpha^*\cP_\alpha = \cS_\alpha \ar[r]^-{[1]} & 
}
$$
With the analogous properties recorded above, the exact triangles   imply the
 properties:
\begin{enumerate}
\item $\ssupp(\cS_\alpha) \subset \sL_\dirT $,
\item $\Hom_{\Sh(\R^\dirT)}(k_{\R^\dirT}, \cS_\alpha) \simeq 0$.
\end{enumerate}

 \begin{remark}
 In fact, the exact triangles imply the precise singular support calculation
$$
\ssupp(\cS_\alpha) = \text{ closure of } (\sL_\dirT(\alpha) \cup \sL_\dirT({\hat \alpha})) \setminus  (\sL_\dirT(\alpha) \cap \sL_\dirT({\hat \alpha}))
$$
 \end{remark}
 
 Furthermore, the exact triangles  also  imply that the triangulated hull of the collection of objects $\cP_\alpha  \in \Sh(\R^\dirT) $, for $\alpha\in V(T)$,
coincides with that of
 the collection of objects  $\cS_\alpha \in \Sh(\R^\dirT)$,  
 for $\alpha\in V(T)$.

\begin{prop}\label{prop gen}
The collection of objects $\cP_\alpha\in \Sh(\R^\dirT)$, for  $\alpha\in V(T)$, or alternatively
 the collection of objects  $\cS_\alpha$,  
 for $\alpha\in V(T)$,
 generates the full dg  subcategory $\Sh_{\sL_\dirT}(\R^\dirT) \subset \Sh(\R^\dirT)$ consisting of objects
$\cF$ for which:
\begin{enumerate}
\item $\ssupp(\cF) \subset \sL_\dirT$, and
\item $\Hom_{\Sh(\R^\dirT)}(k_{\R^\dirT}, \cF) \simeq 0$.
\end{enumerate}

\end{prop}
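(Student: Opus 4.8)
The inclusion $\langle\cP_\alpha\rangle\subseteq\Sh_{\sL_\dirT}(\R^\dirT)$ is immediate: each $\cP_\alpha$ satisfies properties (1)--(2) recorded above, and both defining conditions of $\Sh_{\sL_\dirT}(\R^\dirT)$ --- that $\ssupp(\cF)\subseteq\sL_\dirT$ and $\Hom(k_{\R^\dirT},\cF)\simeq0$ --- pass to shifts, cones (by subadditivity of singular support on triangles) and summands, so the triangulated hull $\langle\cP_\alpha\rangle$ lands in $\Sh_{\sL_\dirT}(\R^\dirT)$; by the canonical triangles $\cP_{\hat\alpha}\to\cP_\alpha\to\cS_\alpha$ this hull is also $\langle\cS_\alpha\rangle$. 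The content is the reverse inclusion, and the plan is to prove first the variant without the vanishing-sections hypothesis --- that $\Sh(\R^\dirT)_{\ssupp\subseteq\sL_\dirT}$ is generated by $\{\cP_\alpha\}_{\alpha\in V(T)}\cup\{k_{\R^\dirT}\}$ --- and then deduce the proposition by splitting off a constant summand, using $R\Gamma(\R^\dirT,\cP_\alpha)\simeq0$. To organize this, recall that $\sL_\dirT$ is the cooriented conormal Legendrian of the subanalytic hypersurface $\sH_\dirT$, which is in good position; hence by Kashiwara--Schapira~\cite{KS} any $\cF$ with $\ssupp(\cF)\subseteq\sL_\dirT$ is constructible for a finite Whitney stratification of $\R^\dirT$ adapted to $\sH_\dirT$ (concretely, by Proposition~\ref{prop compare}, the piecewise-linear stratification whose cells are the front-projections of the cells of $\sL_T$ together with the components of the complement of $\sH_\dirT$), and the condition $\ssupp(\cF)\subseteq\sL_\dirT$ is the one-sided refinement excluding, along each wall $\sH_\alpha$, the codirection opposite to the coorientation $\sigma_\alpha$. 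So the problem becomes a finite combinatorial one.

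The induction is on $n=|V(T)|$; the base $n=1$ is the immediate check that $\Sh_{\sL_\dirT}(\R)$ is generated by $\cP_\rho$ (resp.\ by $\cP_\rho$ and $k_\R$). For $n\ge2$, choose a maximal vertex $\tau$ with parent $\hat\tau$ and use the recollement of $\Sh(\R^\dirT)$ for the open halfspace $U_\tau=\{h_\tau<0\}$, with closed complement $\sQ_\tau=\{h_\tau\ge0\}$ and localization triangle $\iota_!\iota^*\cF\to\cF\to\kappa_*\kappa^*\cF$. On $U_\tau$ the wall $\sH_\tau$ is absent, so $\ssupp(\cF|_{U_\tau})$ carries no $\tau$-codirection; since (by the prescribed properties of $b$ and $f$) the projection $p\colon U_\tau\to\R^{\dirT_\tau}$ is a fibration with contractible fibres, $\cF|_{U_\tau}\simeq p^*\cG$ for some $\cG$ on $\R^{\dirT_\tau}$ with $\ssupp(\cG)\subseteq\sL_{\dirT_\tau}$, to which the inductive hypothesis applies. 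Since $p^*\cP^{\dirT_\tau}_\alpha\simeq\cP_\alpha|_{U_\tau}$ for $\alpha\in V(T_\tau)$, $p^*k_{\R^{\dirT_\tau}}\simeq k_{U_\tau}$, and crucially $\iota_!k_{U_\tau}\simeq\cP_\tau$, the term $\iota_!\iota^*\cF$ lies in the triangulated hull of $\cP_\tau$ and the objects $\iota_!\iota^*\cP_\alpha=\iota_!(\cP_\alpha|_{U_\tau})$, $\alpha\in V(T_\tau)$; the latter, together with the third term $\kappa_*\kappa^*\cF$, must be controlled on the closed halfspace $\sQ_\tau$, where one checks that $\kappa^*\cP_\alpha$ vanishes for $\alpha<\tau$ and is otherwise again of $i_!$-type.

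The main obstacle is precisely the analysis on the closed side $\sQ_\tau$: one must run a secondary d\'evissage of $\kappa^*\cF$ (and of the $\kappa^*\cP_\alpha$) down the induced stratification of $\sH_\dirT\cap\sQ_\tau$ and verify that it produces only objects already in $\langle\cP_\beta:\beta\in V(T)\rangle$, and --- the genuinely delicate point --- that the resulting functor from $\langle\cP_\alpha\rangle$ into $\Sh_{\sL_\dirT}(\R^\dirT)$ is essentially surjective rather than merely inducing an isomorphism on the associated graded pieces of the stratification filtration. The base of this secondary d\'evissage is the stratum of the central point (the origin of $\R^\dirT$, the front-projection of the cone point of $\sL_T$), where the verification amounts to understanding the local sheaf category on a cone over $\sL_T^\link$; here one invokes that $\sL_T^\link$ is a bouquet of $|V(T)|$ spheres of dimension $|V(T)|-1$ (Corollary~\ref{cor bouquet}), equivalently its explicit regular cell structure (Theorem~\ref{thm reg cc}). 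Alternatively, the whole argument can be recast: by Kashiwara--Schapira and Theorem~\ref{thm reg cc}, the claim becomes a purely combinatorial generation statement for representations of the exit-path category of that stratification, which may be verified directly.
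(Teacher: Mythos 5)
Your outline stops exactly where the proof has to happen, so there is a genuine gap rather than a complete argument. You set up the easy inclusion, a reduction allowing $k_{\R^\dirT}$ as an extra generator, and an induction that removes a maximal vertex $\tau$ via the recollement triangle for $U_\tau=\{h_\tau<0\}$ against $\sQ_\tau=\{h_\tau\ge0\}$; but then you yourself label the analysis of $\kappa_*\kappa^*\cF$ (and of $\kappa_*\kappa^*\cP_\alpha$, which your open-side step also feeds into) as ``the main obstacle'' and ``the genuinely delicate point,'' and you only gesture at how it might be handled (a secondary d\'evissage whose base case ``may be verified directly''). That unproven step is precisely the generation statement: a stratification-by-stratification d\'evissage only identifies associated graded pieces, and you correctly note that essential surjectivity does not follow from that --- but you do not supply the missing mechanism. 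Moreover, the leaf-first choice makes the gap hard to close: the closed halfspace $\sQ_\tau$ still contains every other wall together with the closure of the wing $\sH_\tau\cap\{h_{\hat\tau}>0\}$, so it is not a smaller instance of the same problem, and your induction hypothesis (stated for $\R^{\dirT_\tau}$) does not apply to it.

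Compare with the paper's proof, which runs the recursion from the root downward rather than from a leaf. There the open set $U_\rho=\{x_\rho<0\}$ meets no wall at all (since $h_\alpha\ge0$ forces $h_\rho\ge0$), so $i_{\rho!}i_\rho^!\cF$ is immediately a sum of shifted copies of $\cS_\rho$ with no auxiliary pullback claim; and the closed side is disposed of by a microlocal vanishing lemma: the cone of $q_{\rho*}q_\rho^*\cF\to q^\circ_{\rho*}q^{\circ*}_\rho\cF$ is supported on $\sH_\rho$ with singular support forced into the single codirection $\sL_\dirT(\rho)$, hence vanishes. This lets one replace the closed halfspace by its open interior $\{h_\rho>0\}$, where Theorem~\ref{thm reg cc} shows the remaining Legendrian splits into disjoint pieces indexed by the children of $\rho$, so the sheaf splits as a direct sum and the same step repeats one vertex further down the poset. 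That vanishing lemma is exactly the device your sketch lacks: it is what upgrades the d\'evissage to an actual exhibition of every object of $\Sh_{\sL_\dirT}(\R^\dirT)$ as a finite iterated extension of sums of shifted $\cS_\alpha$'s, i.e.\ what gives essential surjectivity. To repair your proposal you would either need an analogous propagation/vanishing statement on $\sQ_\tau$ (together with an induction hypothesis strong enough to apply to that region), or simply reorganize the induction root-first as the paper does.
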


\begin{proof}
It suffices to prove the assertion for the collection of objects  $\cS_\alpha$,  
 for $\alpha\in V(T)$.
 
 For each $\alpha\in V(T)$, recall the inclusion of the open subspace 
$$
\xymatrix{
i_\alpha:U_\alpha = \{h_\alpha <0\} \ar@{^(->}[r] &  \R^\dirT
}
$$
Introduce the complementary closed inclusion
 $$
\xymatrix{
q_\alpha:\sQ_\alpha = \{h_\alpha \geq 0\} \ar@{^(->}[r] &  \R^\dirT
}
$$
and the open inclusion of its interior
 $$
\xymatrix{
q^\circ_\alpha:\sQ^\circ_\alpha = \{h_\alpha > 0\} \ar@{^(->}[r] &  \R^\dirT
}
$$

Now let us begin with the first step of an iterative procedure to prove the assertion.

Step ($\rho$).
Fix an object $\cF\in\Sh_{\sL_\dirT}(\R^\dirT)$.  To start, we have the canonical exact triangle
 $$
\xymatrix{
i_{\rho !} i_\rho^!\cF \ar[r] & \cF \ar[r] & q_{\rho*}q_\rho^*\cF \ar[r]^-{[1]} & 
}
$$
Note that $i_{\rho !} i_\rho^!\cF$ is a sum of copies of $\cS_\rho$.
Furthermore, the  canonical restriction map
$$
\xymatrix{
 q_{\rho*}q_\rho^*\cF \ar[r] &  q^\circ_{\rho*}q_\rho^{\circ*}\cF 
}
$$
 is a quasi-isomorphism
since its cone is an object supported within $\sH_\rho$ but with singular support inside the codirection $\sL_\dirT(\rho)$  and hence must vanish.
%Thus we may assume that $\cF$ is a complex on $\sQ^\circ_\rho$.

Set $\cF_{>\rho} = q_\rho^{\circ*}\cF \in \Sh(\sQ^\circ_\rho).$

 %We will continue with the  recollement pattern vertex by vertex.

Consider the descendant set $d(\rho) \subset V(T)$ of those vertices $\alpha\in V(T)$ with parent vertex $\hat\alpha = \rho \in V(T)$. Note that the inclusion of $d(\rho)$ into the disjoint union
of trees  $T\setminus \{\rho\}$ gives a bijection with its connected components.
Recall by construction that $\sL_\dirT(\rho)$ is the union of $\sL_\dirT(\fp)$ for those partitions
$$
\xymatrix{
\fp=(R &  \ar@{->>}[l]_-q  S \ar@{^(->}[r]^-i & T)
}
$$
such that $\rho \in V(S)$.
%Thus 
%$\sL_\dirT \setminus \sL_\dirT(\rho)$ 
%is the union of $\sL_\dirT(\fp)$ for those partitions
%$\fp$ 
%such that $\rho \not \in V(S)$.
The poset of those partitions $\fp$ such that $\rho \not\in V(S)$ is a disjoint union indexed by $d(\rho)$ given by which component of $T\setminus \{\rho\}$ contains $S$.
Thus by Theorem~\ref{thm reg cc}, the complement
$\sL_\dirT \setminus \sL_\dirT(\rho)$ is
a disjoint union indexed by $d(\rho)$, and
 hence $\cF_{>\rho} $ is a corresponding direct sum indexed by $d(\rho)$. 

For each $\alpha\in d(\rho)$, set $\cF_{\geq \alpha} \subset \cF_{>\rho} $ to be the corresponding summand.

This concludes Step ($\rho$).

Step ($\alpha$), for each $\alpha\in d(\rho)$.

Now  repeat  the  recollement pattern  introduced above at Step ($\rho$), but applied to each summand  $\cF_{\geq \alpha}$
corresponding to the root vertex $\alpha\in d(\rho)$ of a connected component of the remaining disjoint union of trees $T\setminus\{\rho\}$. It results in the expression of each $\cF_{\geq \alpha}$  as an extension built
out of sums of copies of  $\cS_\alpha$ and a new sheaf $\cF_{> \alpha}$ which is itself a sum
of sheaves $\cF_{\geq \alpha'}$, for $\alpha'\in d(\alpha)$, to which we can then apply the next iterative step.
Here as before we write $d(\alpha) \subset V(T)$ for the descendant set of those vertices $\alpha'\in V(T)$ with parent vertex $\hat\alpha' = \alpha \in V(T)$.

Let us spell out the general Step ($\alpha$) where we start with $\cF_{\geq \alpha} \subset \cF_{>\hat\alpha} $ but do not assume that $\alpha \in d(\rho)$. We have the canonical exact triangle
 $$
\xymatrix{
i_{\alpha !} i_\alpha^!q^\circ_{\hat\alpha*} \cF_{\geq \alpha}  \ar[r] & q^\circ_{\hat\alpha*}\cF_{\geq \alpha}  \ar[r] & q_{\alpha*}q_\alpha^*\cF_{\geq \alpha}  \ar[r]^-{[1]} & 
}
$$
Note that $i_{\alpha !} i_\alpha^!q^\circ_{\hat\alpha*} \cF_{\geq \alpha}$  is a sum of copies of $\cS_\alpha$.
Furthermore, the  canonical restriction map
$$
\xymatrix{
 q_{\alpha*}q_\alpha^*\cF_{\geq \alpha} \ar[r] &  q^\circ_{\alpha*}q_\alpha^{\circ*}\cF_{\geq \alpha}
}
$$
 is a quasi-isomorphism
since its cone is an object supported within $\sH_\alpha$ but with singular support inside the codirection $\sL_\dirT(\alpha)$  and hence must vanish.

Set $\cF_{>\alpha} = q_\alpha^{\circ*}\cF_{\geq \alpha} \in \Sh(\sQ^\circ_\alpha).$

Consider the descendant set $d(\alpha) \subset V(T)$ of those vertices $\alpha'\in V(T)$ with parent vertex $\hat\alpha' = \alpha \in V(T)$. 
By Theorem~\ref{thm reg cc}, observe that $\sL_\dirT \setminus (\cup_{\gamma\not > \alpha} \sL_\dirT(\gamma))$ is a disjoint union indexed by $d(\alpha)$, and hence $\cF_{>\alpha} $ is a corresponding direct sum indexed by $d(\alpha)$. 

For each $\alpha'\in d(\alpha)$, set $\cF_{\geq \alpha'} \subset \cF_{>\alpha} $ to be the corresponding summand.

Now continue inductively vertex by vertex following the partial order.
\end{proof}

%
%\begin{corollary}
%The collection of objects $\cS_\alpha\in \Sh(\R^\dirT)$, for all $\alpha\in V(T)$, generate the full dg  subcategory $\Sh_{\sL_\dirT}(\R^\dirT) \subset \Sh(\R^\dirT)$.
%\end{corollary}

Now we will calculate the  dg category  $\Sh_{\sL_\dirT}(\R^\dirT)$ by calculating the morphisms between 
the generating objects $\cP_\alpha\in \Sh(\R^\dirT)$, for  $\alpha\in V(T)$.

Recall that we can regard  the rooted tree $\dirT = (T, \rho)$ as a poset with the root vertex $\rho\in V(T)$
the unique minimum. 
To each non-root vertex $\alpha \not = \rho \in V(T)$ there is a unique parent vertex $\hat \alpha\in V(T)$ such that $\alpha> \hat \alpha$ and there are no  vertices strictly between them.

Now let us regard the rooted tree $\dirT= (T, \rho)$ as a quiver with a unique arrow pointing from each non-root vertex $\alpha \not = \rho \in V(T)$ to its parent vertex
$\hat \alpha\in V(T)$. Symbolically, we replace the relation $\alpha >\hat\alpha$ with the relation $\alpha \to \hat \alpha$.

 Let $ \Mod(\dirT)$ denote the dg derived category of finite-dimensional complexes of modules over  $\dirT$ regarded as a quiver.
 Objects assign to each vertex $\alpha \in V(T)$ a finite-dimensional complex of $k$-vector spaces $M(\alpha)$, and to each arrow $\alpha \to \hat \alpha$ a degree zero chain map $m_\alpha:M(\alpha) \to M({\hat \alpha})$.
 
 Let us point out two natural generating collections for $\Mod(\dirT)$. There are the simple modules $S_\alpha \in \Mod(\dirT)$ that assign 
 $$
S_\alpha(\beta) = \left\{\begin{array}{cl}
k  &  \text{ when } \beta = \alpha\\
0 & \text{ when } \beta\not =\alpha
\end{array}\right.
$$
with all maps $m_\beta:S_\alpha(\beta) \to S_\alpha(\hat \beta)$ necessarily zero.
 There are also the projective modules $P_\alpha \in \Mod(\dirT)$ that assign 
 $$
P_\alpha(\beta) = \left\{\begin{array}{cl}
k  &  \text{ when } \beta \leq \alpha\\
0 & \text{ when } \beta > \alpha
\end{array}\right.
$$ 
with the  maps $m_\beta:P_\alpha(\beta) \to P_\alpha(\hat \beta)$ the identity isomorphism whenever both domain and range are nonzero.

\begin{thm}\label{thm equiv}
There is a canonical equivalence
$$
\xymatrix{
\varphi:\Sh_{\sL_\dirT}(\R^\dirT) \ar[r]^-\sim &  \Mod(\dirT)
}
$$
such that
$
\varphi(\cP_\alpha) = P_\alpha$ and $\varphi(\cS_\alpha) = S_\alpha $, for all $\alpha\in V(T)$.
\end{thm}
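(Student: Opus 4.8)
The plan is to realize the equivalence as the functor
$\varphi := \BRHom_{\Sh(\R^\dirT)}\bigl(\bigoplus_{\alpha\in V(T)}\cP_\alpha,\,-\bigr)$
and to identify its target by dg Morita theory \cite{KellerICM}. By Proposition~\ref{prop gen} the object $G := \bigoplus_{\alpha\in V(T)}\cP_\alpha$ is a (compact) classical generator of $\Sh_{\sL_\dirT}(\R^\dirT)$, so $\varphi$ is an equivalence onto $\Perf(A)$, where $A := \BREnd_{\Sh(\R^\dirT)}(G)$. The theorem then reduces to two things: showing $A$ has cohomology concentrated in degree zero, hence is formal and equivalent as a dg algebra to the ordinary algebra $H^0(A)$, which is (up to the standard opposite-algebra convention relating the endomorphism algebra of a sum of objects to a path algebra) the path algebra $k\dirT$ of the quiver $\dirT$; since $k\dirT$ is finite-dimensional and hereditary, $\Perf(A)\simeq\Mod(\dirT)$. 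The remaining point is to match $\varphi$ on the objects $\cP_\alpha$ and $\cS_\alpha$.

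The computational core is the bimodule $\BRHom_{\Sh(\R^\dirT)}(\cP_\alpha,\cP_\beta)$. As $i_\alpha\colon U_\alpha\hookrightarrow\R^\dirT$ is an open inclusion, $i_\alpha^!=i_\alpha^*$, so adjunction gives $\BRHom(\cP_\alpha,\cG)\simeq\mathbf{R}\Gamma(U_\alpha,\cG|_{U_\alpha})$. Passing to the rectilinear model via the homeomorphism $F_\dirT$ of Proposition~\ref{prop compare}, I identify $U_\gamma$ with $\R^{V(T)}\setminus Q_\gamma$, the complement of the coordinate quadrant $Q_\gamma=\{x_\delta\ge 0\ \text{for all}\ \delta\le\gamma\}$; by Remark~\ref{rem coord depend}, $U_\beta\subseteq U_\alpha$ whenever $\beta\le\alpha$. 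There are two cases. If $\alpha\le\beta$ then $\cP_\beta|_{U_\alpha}\simeq k_{U_\alpha}$, and since $U_\alpha$ is star-shaped, hence contractible, $\BRHom(\cP_\alpha,\cP_\beta)\simeq k$ in degree $0$. If $\alpha\not\le\beta$, then $\cP_\beta|_{U_\alpha}$ is the extension by zero of $k_{U_\alpha\cap U_\beta}$, so $\BRHom(\cP_\alpha,\cP_\beta)$ is the fiber of the restriction $\mathbf{R}\Gamma(U_\alpha;k)\to\mathbf{R}\Gamma(U_\alpha\setminus U_\beta;k)$; here $U_\alpha\setminus U_\beta\cong Q_\beta\cap(\R^{V(T)}\setminus Q_\alpha)$ factors, according to the partition of the coordinates into the ancestors of $\beta$, the ancestors of $\alpha$ that are not ancestors of $\beta$ (a nonempty set, since $\alpha\not\le\beta$ forces the nearest common ancestor of $\alpha$ and $\beta$ to be a proper ancestor of $\alpha$), and the remaining vertices, as a product of a closed orthant, the complement of a closed orthant, and a Euclidean space — all contractible. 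Hence that restriction map is an isomorphism and the fiber vanishes. So $\BRHom(\cP_\alpha,\cP_\beta)$ is $k$ concentrated in degree zero when $\alpha\le\beta$ and $0$ otherwise.

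It remains to assemble this. For $\alpha\le\beta$ the nonzero class in $\BRHom(\cP_\alpha,\cP_\beta)$ is the extension-by-zero comparison map attached to $U_\alpha\subseteq U_\beta$, and these compose by concatenation; since every interval in the rooted tree-poset $V(T)$ is a chain, the resulting algebra is precisely the path algebra of the Hasse quiver, i.e. $k\dirT$ up to opposite, and (being concentrated in degree zero) $A$ is formal, as asserted. Under $\varphi$, the bimodule computation reads $\varphi(\cP_\alpha)(\beta)=\BRHom(\cP_\beta,\cP_\alpha)=k$ iff $\beta\le\alpha$, with all structure maps isomorphisms, i.e. $\varphi(\cP_\alpha)\simeq P_\alpha$; since the $\cP_\alpha$ generate the source and the $P_\alpha$ generate $\Mod(\dirT)$, the equivalence $\varphi$ carries one collection to the other. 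Finally, applying $\varphi$ to the canonical triangle $\cP_{\hat\alpha}\to\cP_\alpha\to\cS_\alpha$ (with $\cS_\rho=\cP_\rho$) and comparing with the triangle $P_{\hat\alpha}\to P_\alpha\to S_\alpha$ in $\Mod(\dirT)$ gives $\varphi(\cS_\alpha)\simeq S_\alpha$.

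The main obstacle is the second case of the bimodule computation: identifying the homotopy type of $U_\alpha\setminus U_\beta$ and the restriction map on it, which is exactly where the rectilinear presentation of Proposition~\ref{prop compare} and the tree combinatorics (nearest common ancestor) are needed; everything else is bookkeeping — chiefly the opposite-algebra convention, and the fact (automatic once $G$ is a compact generator) that $\varphi(\cF)$ is a finite-dimensional complex, so that $\varphi$ lands in $\Mod(\dirT)$ rather than a larger module category.
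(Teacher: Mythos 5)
Your proposal is correct and follows essentially the same route as the paper: both arguments rest on Proposition~\ref{prop gen} (the $\cP_\alpha$ generate) together with the computation, in the rectilinear model of Proposition~\ref{prop compare}, that $\Hom(\cP_\alpha,\cP_\beta)$ is $k$ in degree zero when $\alpha\leq\beta$ and vanishes otherwise, with compositions giving the path algebra of $\dirT$. The only differences are cosmetic: you present the vanishing case as the fiber of restriction to the contractible set $U_\alpha\cap Q_\beta$ (via a product decomposition at the nearest common ancestor) where the paper uses relative cochains of $(U_\alpha\cap\ol{U}_\beta,\,U_\alpha\cap\partial U_\beta)$, and you spell out the Morita/formality packaging and the matching of $\cP_\alpha,\cS_\alpha$ with $P_\alpha,S_\alpha$ which the paper leaves implicit.
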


\begin{proof}
It suffices to establish the following:
$$
\xymatrix{
\Hom(\cP_\alpha, \cP_\beta) \simeq 0, &
\text{ when } \alpha\not\leq \beta\in V(T) 
}$$
$$
\xymatrix{
\Hom(\cP_\alpha, \cP_\beta) \simeq k\cdot e_{\alpha}^{\beta}, &
\text{ when } \alpha\leq \beta\in V(T) 
}$$
where $e_\alpha^\beta$ is a generator of degree zero satisfying
$$
\xymatrix{
e_\alpha^\gamma = e_\beta^\gamma \circ e_{\alpha}^{\beta}, & 
\text{ when } \alpha\leq \beta\leq \gamma\in V(T) 
}$$

To start, for any $\alpha, \beta\in V(T)$, we have
$$
\Hom(\cP_\alpha, \cP_\beta) =\Hom(i_{\alpha!}k_{U_\alpha}, i_{\beta!}k_{U_\beta}) 
\simeq
\Hom(k_{U_\alpha}, i_\alpha^!i_{\beta!}k_{U_\beta}) 
\simeq
C^*(U_\alpha\cap \ol{U}_\beta, U_\alpha\cap \partial U_\beta; k)
$$
where the last term is the complex of relative singular cochains.
Furthermore, the composition of morphisms is the natural cup product 
$$
\xymatrix{
\cup:C^*(U_\alpha\cap \ol{U}_\beta,U_\alpha\cap \partial U_\beta; k)\otimes
C^*(U_\beta\cap \ol{U}_\gamma, U_\beta\cap\partial U_\gamma; k)\ar[r] &
C^*(U_\alpha\cap \ol{U}_\gamma, U_\alpha\cap\partial U_\gamma; k)
}
$$
induced by viewing
relative singular cochains as a subcomplex of singular cochains and taking
 the restriction of the usual cup product.

When $\alpha\not\leq \beta\in V(T)$, 
then either of two cases hold: (i)  $\alpha>\beta $, or (ii) $\alpha$ and $\beta$ are not comparable.
Let us  verify in each  case the relative cohomology vanishes
$$
C^*(U_\alpha\cap \ol{U}_\beta, U_\alpha\cap \partial U_\beta; k)\simeq 0
$$ 

We will appeal
 to Proposition~\ref{prop compare}  in order to assume the rectilinear presentation
$$
\xymatrix{
U_\alpha \simeq \{x_\gamma < 0 \text{ for some } \gamma\leq \alpha\}
&
U_\beta \simeq \{x_\gamma < 0 \text{ for some } \gamma\leq \beta\}
}
$$
so that we  also have
%$$
%\xymatrix{
%\ol U_\beta \simeq \{x_\gamma \leq 0 \text{ for some } \gamma\leq \beta\}
%}
%$$
$$
\xymatrix{
\partial U_\beta \simeq \{x_\gamma \geq  0 \text{ for all } \gamma\leq \beta  ;\,  x_{\gamma} = 0 \text{ for some } \gamma \leq \beta\}
}
$$

Since $U_\beta$ is homeomorphic to an open halfspace with $\partial U_\beta$ homeomorphic to a hyperplane, it suffices to see that $U_\alpha\cap \partial U_\beta$ is contractible.

Therefore in case (i) when $\alpha>\beta$,  we have
$$
\xymatrix{
U_\alpha\cap \partial U_\beta \simeq \{x_\gamma < 0 \text{ for some } \beta<\gamma\leq \alpha  ;\, 
x_\gamma \geq  0 \text{ for all } \gamma\leq \beta   ;\,  x_\gamma = 0 \text{ for some } \gamma \leq \beta \}
}
$$
which is clearly contractible: contract along straight lines taking $x_\gamma\to -1$, for all $\beta<\gamma \leq \alpha$,
and 
$x_\gamma\to 0$, else.

In case (ii) when  $\alpha$ and $\beta$ are not comparable, let $\eta\in V(T)$ be the maximal element such that $\eta \leq \alpha, \beta.$ Then we have
$$
\xymatrix{
U_\alpha\cap \partial U_\beta \simeq \{x_\gamma < 0 \text{ for some } \eta<\gamma\leq \alpha  ;\, 
x_\gamma \geq  0 \text{ for all } \gamma\leq \beta   ;\,  x_\gamma = 0 \text{ for some } \gamma \leq \beta \}
}
$$
which is also clearly contractible:
contract along straight lines taking $x_\gamma\to -1$, for all $\eta<\gamma \leq \alpha$,
and 
$x_\gamma\to 0$, else.

%Thus we have established
%$$
%\xymatrix{
%\Hom(\cP_\alpha, \cP_\beta) \simeq 0, &
%\text{ when } \alpha\not\leq \beta\in V(T) 
%}$$

Now when $\alpha\leq \beta\in V(T)$, we have $U_\alpha\subset U_\beta$ with $U_\alpha$ contractible, hence the morphism complex simplifies to
$$
\xymatrix{
\Hom(\cP_\alpha, \cP_\beta) \simeq C^*(U_\alpha; k) \simeq k\cdot e_\alpha^\beta
}$$ 
where $e_\alpha^\beta$ denotes the constant cochain of degree zero and value $1\in k$.
Furthermore, for $\alpha\leq \beta\leq \gamma\in V(T)$, the composition 
$$
\xymatrix{
\Hom(\cP_\alpha, \cP_\beta) \otimes
\Hom(\cP_\beta, \cP_\gamma) \ar[r] &
\Hom(\cP_\alpha, \cP_\gamma) 
}
$$
simplifies to the natural cup product of cochains
$$
\xymatrix{
\cup:C^*(U_\alpha; k)\otimes
C^*(U_\beta; k)\ar[r] &
C^*(U_\alpha; k)
}
$$
which clearly satisfies $e_\alpha^\beta \cup e_\beta^\gamma = e_\alpha^\gamma$.
\end{proof}

%%%%%%%%%%%%%%%%%%%%%%%%%%%%%%%%%%%%%%%%%%%%%%%%%%%%%%%

\subsection{Restriction functors}
We continue with a fixed rooted tree $\dirT= (T, \rho)$ with smoothed arboreal hypersurface 
$$
\sH_\dirT = \bigcup_{\alpha\in V(T)} \sH_\alpha \subset \R^\dirT
$$
with conormal Legendrian the arboreal singularity 
$$
\sL_\dirT = \bigcup_{\alpha\in V(T)} \sL_\dirT(\alpha) \subset S^*\R^\dirT
$$

Recall that the  strata $\sL_\dirT(\fp) \subset \sL_\dirT$ are contractible and
 indexed by correspondences
$$
\xymatrix{
\fp=(R &  \ar@{->>}[l]_-q  S \ar@{^(->}[r]^-i & T) %\in \fP_T \setminus\{\fp_0\}
}
$$
where $i$ is the inclusion of  a subtree, and $q$ is a quotient map of trees.
Furthermore, the normal slice to the stratum  
is homeomorphic to the arboreal singularity $\sL_R$.
%
%One can repeat the proofs
%of Proposition~\ref{prop gen} and Theorem~\ref{thm equiv} to see that the natural microlocal restriction
%

Fix any point $\lambda\in \sL_\dirT(\fp)  $ with projection $x = \pi(\lambda) \in \sH_\dirT$. Choose a small open ball $B(\fp)\subset \R^\dirT$
around $x$. The open set $\pi^{-}(B) \subset S^*\R^\dirT$ intersects $\sL_\dirT$ in possibly many connected components. Let $\Lambda(\fp)\subset \sL_\dirT$ denote the connected component containing $\lambda$.

Introduce the dg category  $\Sh_{\Lambda(\fp)}(B(\fp))$ of constructible complexes of $k$-vector spaces on $B(\fp)$ microlocalized
along the Legendrian subspace $\Lambda(\fp) \subset S^* B(\fp)$.
Restriction of sheaves along the open inclusion  $B(\fp)\subset \R^\dirT$ induces  a natural microlocal restriction functor 
$$
\xymatrix{
\res:\Sh_{\sL_\dirT}(\R^\dirT) \ar[r] & \Sh_{\Lambda(\fp)}(B(\fp))
}
$$
%Abstractly, one can define $\Sh_{\sL_\dirT}(\R^\dirT)$ as the dg quotient category of $\Sh(\R^\dirT)$ by the full subcategory of all objects $\cF$
%for which
%$\ssupp(\cF) \cap \sL_\dirT = \emptyset$.
%Equivalently, one can take $\Sh_{\sL_\dirT}(\R^\dirT)$ to be the full dg  subcategory of $\Sh(\R^\dirT)$ consisting of objects
%$\cF$ for which:
%\begin{enumerate}
%\item $\ssupp(\cF) \subset \sL_\dirT$, and
%\item $\Hom_{\Sh(\R^\dirT)}(k_{\R^\dirT}, \cF) \simeq 0$.
%\end{enumerate}

Let us denote by $\cN(\fp) \subset Sh_{\sL_\dirT}(\R^\dirT)$ the full dg subcategory generated by the objects:
\begin{enumerate}
\item
$\cP_\alpha\in \Sh_{\sL_\dirT} (\R^\dirT)$,  when  $\alpha \not\in i(V(S))$,
\item
$\cS_\alpha \in  \Sh_{\sL_\dirT} (\R^\dirT)$,  when  $\alpha, \hat\alpha \in i(V(S)), q(\alpha) = q(\hat\alpha) \in V(R)$.
\end{enumerate}

Observe that the singular support  of any of the above  generating objects, and hence any object  of $\cN(\fp)$,  is disjoint from   $\sL_\dirT(\fp) \subset \sL_\dirT$.
Thus we have the evident vanishing:
$$
\xymatrix{
\res(\cF) \simeq 0, & \text{ for any } \cF\in \cN(\fp).
}$$
%\begin{enumerate}
%\item
%$\res(\cP_\alpha) \simeq 0$,   when  $\alpha \not\in i(S)$,
%\item
%$\res(\cS_\alpha) \simeq 0$,  when  $\alpha, \hat\alpha \in i(V(S)), q(\alpha) = q(\hat\alpha) \in V(R)$.
%\end{enumerate}

\begin{remark}
Thanks to the canonical exact triangle
$$
\xymatrix{
\cP_{\hat \alpha} = i_{\hat\alpha!} i_{\hat\alpha}^!\cP_\alpha \ar[r]^-u & \cP_{\alpha} \ar[r] & i_{\alpha!}j_{\alpha*}j_\alpha^*\cP_\alpha = \cS_\alpha \ar[r]^-{[1]} & 
}
$$
the  vanishing $\res(\cS_\alpha) \simeq 0$ is equivalent to $\res(u)$ being a quasi-isomorphism.
\end{remark}

We will see that the  microlocal restriction  functor 
exhibits $ \Sh_{\Lambda(\fp)}(B(\fp))$ as the dg quotient of
$Sh_{\sL_\dirT}(\R^\dirT)$ by the dg subcategory $\cN(\fp)$.

To spell this out, observe that the quiver structure on $\dirT$ induces one on the subtree $S$ and subsequent quotient tree $R$.
Let us write $\mathcal S$ and $\mathcal R$ to denote $S$ and $R$ with their respective quiver structures.

 Consider the inclusion of the subtree
$
i:S\hra T.
$
Define the quotient functor 
$$
\xymatrix{
i^*:\Mod(\dirT) \ar[r] &  \Mod(\mathcal S)
}
$$ by killing  the projective modules $P_\alpha \in \Mod(\dirT)$, when $\alpha\not \in i(V(S))$.

Consider the quotient map $q:S\too R$.
Define the quotient functor 
$$
\xymatrix{
q_!:\Mod(\mathcal S) \ar[r] & \Mod(\mathcal R)
}
$$ 
by killing  the simple modules $S_\alpha \in \Mod(\mathcal S)$, when $q(\alpha) = q(\hat \alpha) \in V(R)$.

Observe that the composite
$$
\xymatrix{
q_!i^*: Mod(\dirT)\ar[r] & Mod(\mathcal R)
}
$$
is the quotient functor by the full dg subcategory $N(\fp) \subset \Mod(\dirT)$ generated by 
\begin{enumerate}
\item
$P_\alpha\in \Mod(\dirT)$,  when  $\alpha \not\in i(V(S))$,
\item
$S_\alpha \in  \Mod(\dirT)$,  when  $\alpha, \hat\alpha \in i(V(S)), q(\alpha) = q(\hat\alpha) \in V(R)$.
\end{enumerate}

\begin{prop}
There is a natural commutative diagram 

$$
\xymatrix{
\ar[d]_-\res \Sh_{\sL_\dirT}(\R^\dirT) \ar[r]^-\sim &  \Mod(\dirT) \ar[d]^-{q_!  i^*}\\
 \Sh_{\Lambda(\fp)}(B(\fp)) \ar[r]^-\sim &  \Mod(\mathcal R) \\
}
$$
\end{prop}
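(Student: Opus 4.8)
The plan is to exhibit the microlocal restriction functor $\res$ as a Verdier quotient and to match that quotient, on the algebraic side, with $q_! i^*$. Recall from the discussion preceding the proposition that $\res$ annihilates every object of the dg subcategory $\cN(\fp) \subset \Sh_{\sL_\dirT}(\R^\dirT)$, hence factors through a functor $\overline{\res} \colon \Sh_{\sL_\dirT}(\R^\dirT)/\cN(\fp) \to \Sh_{\Lambda(\fp)}(B(\fp))$. Under the equivalence $\varphi$ of Theorem~\ref{thm equiv} the subcategory $\cN(\fp)$ corresponds to $N(\fp) \subset \Mod(\dirT)$, and we have already observed that $q_! i^*$ is precisely the quotient functor $\Mod(\dirT) \to \Mod(\dirT)/N(\fp) \simeq \Mod(\mathcal R)$. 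Thus the asserted commutative square reduces to showing that $\overline{\res}$ is an equivalence.

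Next I would identify the target $\Sh_{\Lambda(\fp)}(B(\fp))$ with $\Mod(\mathcal R)$. By Corollary~\ref{cor local}, after shrinking $B(\fp)$ we may assume that the part of $\sH_\dirT$ relevant to the component $\Lambda(\fp)$ is, as a cooriented hypersurface, the product of a Euclidean factor with the smoothed arboreal hypersurface $\sH_{\mathcal R}$ of the rooted tree $\mathcal R$ obtained by equipping $R$ with the quiver structure induced from $\dirT$; one uses here the rectilinear model $\sH_\dirT = \bigcup_{\alpha} \{x_\alpha = 0,\ x_\beta > 0 \text{ for all } \beta < \alpha\}$ of Proposition~\ref{prop compare} to set up coordinates. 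Since microlocal sheaves are unchanged by a Euclidean product factor and by a contactomorphism, this gives $\Sh_{\Lambda(\fp)}(B(\fp)) \simeq \Sh_{\sL_{\mathcal R}}(\R^{\mathcal R})$, and Theorem~\ref{thm equiv} applied to $\mathcal R$ identifies the latter with $\Mod(\mathcal R)$, the generating sheaves going to the projective modules $P_{\bar\alpha}$.

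It then remains to check that the composite $\Mod(\mathcal R) \simeq \Sh_{\sL_\dirT}(\R^\dirT)/\cN(\fp) \xrightarrow{\overline{\res}} \Sh_{\Lambda(\fp)}(B(\fp)) \simeq \Mod(\mathcal R)$ is the identity. On generators this is immediate: for $\alpha \notin i(V(S))$, Proposition~\ref{prop strat} gives $v_x(\alpha) = \mathit{no}$ along the stratum, so $\ssupp(\cP_\alpha) = \sL_\dirT(\alpha)$ misses $\Lambda(\fp)$ and $\res(\cP_\alpha) \simeq 0$, matching $i^*(P_\alpha) = 0$; for $\alpha \in i(V(S))$, the stratum $\sL_\dirT(\alpha)$ meets $\sL_\dirT(\fp)$ and, in the local model above, restricts to the stratum of the normal slice indexed by $q(\alpha) \in V(R)$, whence $\res(\cP_\alpha) \simeq \cP^{\mathcal R}_{q(\alpha)} \leftrightarrow P_{q(\alpha)}$, matching $q_! i^*(P_\alpha) = P_{q(\alpha)}$. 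Since the $\cP_\alpha$ generate $\Sh_{\sL_\dirT}(\R^\dirT)$ (Proposition~\ref{prop gen}), their images generate the target and $\overline{\res}$ is essentially surjective; full faithfulness reduces to the $\cP_\alpha$, where both $\Hom$-complexes are computed — by exactly the relative-cochain calculation in the proof of Theorem~\ref{thm equiv}, now carried out inside $B(\fp)$ and governed by $\mathcal R$ — to be $\Hom_{\Mod(\mathcal R)}(P_{q(\alpha)}, P_{q(\beta)})$, with $\overline{\res}$ sending the canonical degree-zero generator to a generator. Hence $\overline{\res}$ is an equivalence and the square commutes.

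The main obstacle is the second step: one must upgrade the purely topological local structure of Corollary~\ref{cor local} to a description of the germ of the \emph{contact} embedding $\sL_\dirT \subset S^*\R^\dirT$ near the stratum $\sL_\dirT(\fp)$, and check that the coorientation inherited from $\sH_\dirT$ presents it as the arboreal Legendrian of $R$ with quiver $\mathcal R$ rather than with some other orientation of the edges of $R$. The combinatorial half — that contracting an edge of $S$ along which a coordinate stays positive never reverses the direction ``toward the root'' — is read off directly from the rectilinear equations; the geometric half, a contact normal form near $\sL_\dirT(\fp)$, is supplied by the inductive decomposition used in the proof of the theorem of Section~\ref{sect dir arb}.
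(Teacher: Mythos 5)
Your route---factoring $\res$ through the Verdier quotient by $\cN(\fp)$ and identifying the target $\Sh_{\Lambda(\fp)}(B(\fp))$ abstractly via a local contact normal form---is structurally different from the paper's. The paper instead chooses representative vertices $\tilde V(R) \subset V(T)$, one in each fiber of $q:S\twoheadrightarrow R$, restricts the corresponding generators $\cP_\alpha$ along $B(\fp)\subset\R^\dirT$, and reruns the recollement of Proposition~\ref{prop gen} and the relative-cochain computation of Theorem~\ref{thm equiv} directly on the restricted objects $\cP_{\ol\alpha}$. It never passes through an abstract identification $\Sh_{\Lambda(\fp)}(B(\fp))\simeq\Sh_{\sL_{\mathcal R}}(\R^{\mathcal R})$ and so never needs a contactomorphism with the model $\sL_{\mathcal R}$, nor an invariance statement for microlocal sheaves under contactomorphism and stabilization.

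The gap you flag at the end is the right one, but it is a bit more serious than your phrasing suggests. Corollary~\ref{cor local} is a homeomorphism $\sL_T(\fp)\times\sL_R\simeq\sL_T(\geq\fp)$ of \emph{abstract} stratified spaces; it carries no information about the contact embedding $\sL_\dirT\subset S^*\R^\dirT$, the front projection $\pi$, or the coorientation inherited from $\sH_\dirT$. Promoting it to a contact normal form---that $\Lambda(\fp)$ is a stabilized copy of the Legendrian $\sL_{\mathcal R}$ presented by the cooriented hypersurface $\sH_{\mathcal R}$ with the edges of $R$ oriented exactly as $\mathcal R$---is a separate statement that would require verifying, from Proposition~\ref{prop strat} and the inductive analysis of Section~\ref{sect dir arb}, that near $\pi(\lambda)$ the walls $\sH_\alpha$, $\alpha\in\tilde V(R)$, arrange themselves in the rectilinear arboreal pattern for $\mathcal R$ with matching coorientations. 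But that front-level computation is precisely what the paper's ``same argument'' phrase refers to. Both proofs ultimately rest on the same local analysis of $\sH_\dirT\cap B(\fp)$; the paper's version is shorter because it uses that analysis in place to compute the $\Hom$-complexes between the $\cP_{\ol\alpha}$, whereas yours first packages it as a normal-form lemma and then also invokes a stabilization/contact-invariance principle that the paper never sets up. Your argument is not wrong, but the normal-form step you defer is the substantive part, and Corollary~\ref{cor local} alone does not supply it.
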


\begin{proof}
The proof is essentially a repeat of the proofs of Proposition~\ref{prop gen} and Theorem~\ref{thm equiv}.

Choose a vertex $\alpha\in V(T)$ in each fiber of $q:S\too R$, and denote their union by $\tilde V(R) \subset V(T)$. Consider the 
collection of objects $\cP_\alpha\in Sh(\R^\dirT)$, for $\alpha\in \tilde V(R)$. Denote by 
$\cP_{\ol \alpha}\in Sh(B(\fp))$, for $\alpha\in \tilde V(R)$
 their restrictions along the open inclusion
$B(\fp) \subset \Sh(\R^\dirT)$.

By the same argument as in the proof of Proposition~\ref{prop gen}, we see that the collection of objects
$\cP_{\ol \alpha}\in Sh(B(\fp))$, for $\alpha\in \tilde V(R)$ generates  $ \Sh_{\Lambda(\fp)}(B(\fp))$.

By the same argument as in the proof of Theorem~\ref{thm equiv}, we see that the generating objects
$\cP_{\ol \alpha}\in Sh(B(\fp))$, for $\alpha\in \tilde V(R)$, give an equivalence  
$ \Sh_{\Lambda(\fp)}(B(\fp))\simeq \Mod(\mathcal R)$.
\end{proof}

%%%%%%%%%%%%%%%%%%%%%%%%%%%%%%%%%%%%%%%%%%%%%%%%%%%%%%%
%%%%%%%%%%%%%%%%%%%%%%%%%%%%%%%%%%%%%%%%%%%%%%%%%%%%%%%
%%%%%%%%%%%%%%%%%%%%%%%%%%%%%%%%%%%%%%%%%%%%%%%%%%%%%%%

%%%%%%%%%%%%%%%%%%%%%%%%%%%%%%%%%%%%%%%%%%%%%%%%%%%%%%%
%%%%%%%%%%%%%%%%%%%%%%%%%%%%%%%%%%%%%%%%%%%%%%%%%%%%%%%
%%%%%%%%%%%%%%%%%%%%%%%%%%%%%%%%%%%%%%%%%%%%%%%%%%%%%%%

%\bibliographystyle{amsalpha}
 %\bibliographystyle{plainnat}

\end{document}